\let\oldfootnote\footnote
\def\footnote{\@ifstar\footnote@star\footnote@nostar}
\def\footnote@star#1{{\let\thefootnote\relax\footnotetext{#1}}}
\def\footnote@nostar{\oldfootnote}
\newcommand{\spref}[1]{\cite[\href{http://stacks.math.columbia.edu/tag/#1}{Tag~#1}]{SP22}}
\def\itemn#1{\item[\hspace{0.6mm} {\rm (#1)}]}
\def\itemm#1{\item[\indent {\rm (#1)}]}
\renewcommand{\ge}{\geqslant}
\renewcommand{\le}{\leqslant}
\newcommand{\sHom}{\sH\!om}
\newcommand{\sThick}{\sT\!hick}
\newcommand{\sExal}{\sE\!xal}
\def\toto{\rightrightarrows}
\def\too{\longrightarrow}
\def\tooo{\relbar\joinrel\longrightarrow}
\def\into{\hookrightarrow}
\newcommand*{\intoo}{\ensuremath{\lhook\joinrel\relbar\joinrel\rightarrow}}
\def\isomto{\xrightarrow{\,\smash{\raisebox{-0.5ex}{\ensuremath{\scriptstyle\sim}}}\,}}
\renewcommand{\hat}{\widehat}
\newcommand{\eps}{\varepsilon}
\newcommand{\smallvee}{{\scriptscriptstyle\vee}}
\newcommand*{\defeq}{\mathrel{\rlap{%
                     \raisebox{0.3ex}{$\m@th\cdot$}}%
                     \raisebox{-0.3ex}{$\m@th\cdot$}}%
                     =}
      \string\usetikzlibrary{decorations.markings} to use arrows with markings}{}}{}%
\newtheorem{counter}[subsubsection]{$\!\!$}
\newtheorem{subcounter}[subsection]{$\!\!$}
\newcounter{intro}
\renewcommand{\theintro}{\arabic{intro}}
\newenvironment{definition}{\begin{counter} \rm {\bf Definition.}}{\end{counter}}
\newenvironment{definition*}{\begin{subcounter} \rm {\bf Definition.}}{\end{subcounter}}
\newenvironment{theorem}{\begin{counter} {\bf Theorem.}}{\end{counter}}
\newenvironment{theorem-quote}[1]
{\begin{counter} {\bf Theorem (#1).}}{\end{counter}}
\newenvironment{theorem*}{\begin{subcounter} {\bf Theorem.}}{\end{subcounter}}
\newenvironment{theorem-intro}
{\refstepcounter{intro}\bigskip\noindent {\bf  Theorem~\theintro.}\em\!\!\!}{\rm\bigskip}
\newenvironment{proposition}{\begin{counter} {\bf Proposition.}}{\end{counter}}
\newenvironment{lemma}{\begin{counter} {\bf Lemma.}}{\end{counter}}
\newenvironment{lemma*}{\begin{subcounter} {\bf Lemma.}}{\end{subcounter}}
\newenvironment{corollary}{\begin{counter} {\bf Corollary.}}{\end{counter}}
\newenvironment{corollary*}{\begin{subcounter} {\bf Corollary.}}{\end{subcounter}}
\newenvironment{remark}{\begin{counter} \rm {\bf Remark.}}{\end{counter}}
\newenvironment{remark*}{\begin{subcounter} \rm {\bf Remark.}}{\end{subcounter}}
\newenvironment{example}{\begin{counter} \rm {\bf Example.}}{\end{counter}}
\newenvironment{examples}{\begin{counter} \rm {\bf Examples.}}{\end{counter}}
\newenvironment{notitle}[1]{\begin{counter} {\bf #1.}\rm}{\end{counter}}
\newenvironment{notitle*}[1]{\begin{subcounter} {\bf #1.}\rm}{\end{subcounter}}
\newenvironment{proof}{{\flushleft \bf Proof~:}}{\hfill $\square$ \vspace{5mm}}
\DeclareMathOperator{\Spec}{Spec}
\DeclareMathOperator{\Frob}{F}
\DeclareMathOperator{\et}{\acute{e}t}
\DeclareMathOperator{\fppf}{fppf}
\DeclareMathOperator{\colim}{colim}
\DeclareMathOperator{\Hilb}{Hilb}
\DeclareMathOperator{\Hom}{Hom}
\DeclareMathOperator{\Ext}{Ext}
\DeclareMathOperator{\Aut}{Aut}
\DeclareMathOperator{\Isom}{Isom}
\DeclareMathOperator{\Mono}{Mono}
\DeclareMathOperator{\id}{id}
\DeclareMathOperator{\pr}{pr}
\DeclareMathOperator{\Ass}{Ass}
\DeclareMathOperator{\GL}{GL}
\DeclareMathOperator{\SO}{SO}
\DeclareMathOperator{\Res}{Res}
\DeclareMathOperator{\Resgr}{GRes}
\DeclareMathOperator{\Sch}{\bf Sch}
\DeclareMathOperator{\Sp}{Sp}
\DeclareMathOperator{\length}{length}
\DeclareMathOperator{\can}{can}
\DeclareMathOperator{\mult}{mult}
\DeclareMathOperator{\red}{red}
\DeclareMathOperator{\type}{\mathbf{t}}
\DeclareMathOperator{\Sub}{Sub}
\DeclareMathOperator{\Norm}{Norm}
\DeclareMathOperator{\Liset}{Lis-\acute{E}t}
\DeclareMathOperator{\triv}{triv}
\DeclareMathOperator{\str}{str}
\DeclareMathOperator{\Ind}{Ind}
\DeclareMathOperator{\Thick}{Thick}
\DeclareMathOperator{\Exal}{Exal}
\DeclareMathOperator{\QCoh}{QCoh}
\DeclareMathOperator{\Eq}{Eq}
\DeclareMathOperator{\val}{val}
\DeclareMathOperator{\inn}{inn}
\DeclareMathOperator{\St}{St}
\DeclareMathOperator{\modulo}{\ mod }
\DeclareMathAlphabet{\mathcalligra}{T1}{calligra}{m}{n}
 \def\cF{{\cal F}}  \def\cH{{\cal H}}
\def\cO{{\cal O}}
\def\cZ{{\cal Z}}
\newcommand\CC{\mathbb{C}}
\newcommand\GG{\mathbb{G}}
\newcommand\HH{\mathbb{H}}
\newcommand\LL{\mathbb{L}}
\newcommand\PP{\mathbb{P}}
\newcommand\QQ{\mathbb{Q}}
\newcommand\ZZ{\mathbb{Z}}
  \def\sE{\mathscr{E}}
\def\sF{\mathscr{F}} \def\sG{\mathscr{G}} \def\sH{\mathscr{H}}
 \def\sP{\mathscr{P}} \def\sQ{\mathscr{Q}}
\def\sR{\mathscr{R}}  \def\sT{\mathscr{T}}
\def\sX{\mathscr{X}}  
\begin{document}

\quad\bigskip
\bigskip

\begin{center}
{\bf \LARGE Algebraicity and smoothness of fixed point
stacks}

\bigskip

Matthieu Romagny
\end{center}

\bigskip

\hfill {\em To the memory of Bas Edixhoven}

\bigskip
\bigskip

\begin{center}
\begin{minipage}{14cm}
{\small {\bf Abstract.} We study algebraicity and smoothness
of fixed point stacks for flat group schemes which have a
finite composition series whose factors are either reductive
or proper, flat, finitely presented, acting on algebraic stacks
with affine, finitely presented diagonal. For this, we extend
some theorems of \cite{SGA3.2} on functors of homomorphisms
$\Hom(G,H)$ and functors of reductive subgroups $\Sub(H)$
for an affine, possibly non-flat group scheme $H$.}
\end{minipage}
\end{center}

\footnote*{
\hspace{-7mm} 2020 Mathematics Subject Classification:
Primary: 14A20, Secondary: 14L15, 14D23}

\footnote*{
\hspace{-7mm} Keywords: fixed points, algebraic stack,
$\Hom$ functor, reductive group scheme,
linearly reductive group scheme}

\footnote*{
\hspace{-7mm} Date: \today}

\section{Introduction}

\begin{notitle*}{Context and motivation}
In various situations of algebraic geometry, one needs to
consider the fixed points of a flat group scheme acting
on an algebraic stack. Currently, probably the biggest provider
of such examples is the enumerative industry: Gromov-Witten
and Donaldson-Thomas theories provide a wealth of apparitions
of fixed points in localization formulas for virtual
classes in equivariant cohomology. We refer to Joyce~\cite{Jo21}
for a recent account. Accordingly, fixed point stacks pervade
research articles in the last two decades; with no attempt at
exhaustivity, let us mention the works \cite{CLCT09}, \cite{Di12},
\cite{We11}, \cite{Sk13}, \cite{KL13}, \cite{GJK17},
\cite{OS19}, \cite{MR19}, \cite{KS20}, \cite{LS20},
\cite{BTN21}, \cite{CJW21}. The paper~\cite{Ro05} settles
the question of algebraicity of fixed point stacks only
in the case of actions of {\em proper}
groups, which limitates the scope of applications (and is
typically not sufficient in most works cited above).
It is the purpose of the present article to
extend the results of {\em loc. cit.}, providing algebraicity
and smoothness statements for fixed point stacks in greater
generality. A related issue is that of representability
of functors of group homomorphisms $\Hom(G,H)$ and functors
of subgroups $\Sub(H)$. In \cite{SGA3.2}, Exp.~X1, \S~4 such
representability is proved in the case where the target group
scheme $H$ is {\em smooth}. Unfortunately, for the application
to the fixed points of a group $G$ acting on an algebraic
stack $\sX$, it is the inertia $I_{\sX}\to\sX$ which plays the
role of~$H$, and this is almost never flat. We also explore
these issues, working with a possibly non-flat group $H$.
We answer questions raised in \cite{SGA3.2} by both relaxing
the assumptions and strengthening the results.
\end{notitle*}

\begin{notitle*}{Main results}
Throughout the paper, we denote by $S$ the base scheme.
To get to the heart of the matter we need to recall some
terminology. Following \cite{SGA3.2}, Exp.~XIX, 2.7 and
\cite{AOV08}, 2.2 or \cite{Alp13}, 12.1,
we say that a group scheme $G\to S$ is:
\begin{trivlist}
\itemn{i} {\em reductive} if it is affine and smooth
with connected, reductive geometric fibres;
\itemn{ii} {\em linearly reductive} if it is flat,
separated, of finite presentation, and the functor
$\QCoh^G(S)\to\QCoh(S)$, $\cF\mapsto \cF^G$ is exact.
This includes: group schemes of
multiplicative type, finite locally free group schemes of
order invertible on $S$, abelian schemes, reductive group
schemes if $S$ is a $\QQ$-scheme, and all extensions of such
group schemes (we refer to~\ref{defi:lin_red_gp} and the
comments after it).
\end{trivlist}

Here is our result on fixed point stacks;
see \ref{th-fixed-pt-stack} and
\ref{thm:smoothness-fixed-points}.


\begin{theorem} \label{th-1}
Let $\sX$ be an $S$-algebraic stack with affine,
finitely presented diagonal. Let $G$ be a flat, finitely
presented $S$-group algebraic space acting on $\sX$.
\begin{trivlist}
\itemn{1} Assume that $G$ has a finite composition series
whose factors are either reductive or proper, flat, finitely
presented. Then the fixed point stack $\sX^G$ is algebraic,
and the morphism $\sX^G\to \sX$ is representable by algebraic
spaces, separated and locally of finite presentation.
If $G$ is reductive, this morphism is even representable
by schemes.
\itemn{2} Assume that $\sX$ is smooth and $G$ is
linearly reductive. Then $\sX^G$ is smooth.
\end{trivlist}
\end{theorem}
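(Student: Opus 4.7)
The plan is to attack the two parts by methods that share a common dévissage reducing to the two atomic cases appearing in the composition series. For (1), I first observe that if $1 \to N \to G \to Q \to 1$ is a short exact sequence of flat, finitely presented $S$-group algebraic spaces, then $N$ acts on $\sX$ and the quotient $Q$ acts canonically on the fixed stack $\sX^N$, with a natural identification $\sX^G \simeq (\sX^N)^Q$. Since the property ``affine, finitely presented diagonal'' is preserved under base change, fibre products, and along morphisms representable by algebraic spaces, separated and locally of finite presentation, an induction on the length of the composition series reduces (1) to the cases where $G$ itself is either proper, flat, finitely presented (which is the content of \cite{Ro05}) or reductive.

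For the reductive case, the idea is to describe $\sX^G$ as a moduli of group homomorphisms into the inertia: a $T$-point of $\sX^G$ is a point $x \in \sX(T)$ together with a homomorphism $\rho \colon G_T \to x^*I_{\sX}$ that trivializes the action up to canonical $2$-morphism. Thus $\sX^G \to \sX$ is controlled, locally on $\sX$, by the functor of group homomorphisms from $G$ into the inertia; representability by schemes, separatedness and local finite presentation follow from the representability theorems for $\sHom(G,H)$ and $\Sub(H)$ developed in the earlier sections. The crucial point --- and the reason the classical results of \cite{SGA3.2}, Exp.~XI do not suffice directly --- is that $I_\sX \to \sX$ is almost never flat, so one needs the paper's extension of those theorems to possibly non-flat $H$ with affine, finitely presented diagonal. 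This is exactly the main obstacle of the theorem, and it is what makes the reductive case go through.

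For (2), I assume $\sX$ smooth and $G$ linearly reductive, and verify the formal smoothness lifting criterion for $\sX^G \to S$. Given a square-zero extension $T \hookrightarrow T'$ of affine $S$-schemes with ideal $J$ and a $T$-point $(x,\rho)$ of $\sX^G$, smoothness of $\sX$ produces a lift $x' \colon T' \to \sX$ of $x$; what remains is to lift the fixed-point structure $\rho$ along this thickening. Equivariant deformation theory expresses the obstruction as a class in a first group cohomology $H^1(G_T, V)$ for a quasi-coherent $G_T$-module $V$ built from $J$ and the pullback along $x'$ of the cotangent complex of $\sX/S$, with the torsor of lifts controlled by $H^0$ of a similar module. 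Since $G$ is linearly reductive the functor $(-)^{G_T}$ is exact on $\QCoh(T)$, so all higher group cohomology of quasi-coherent $G_T$-modules vanishes; the obstruction dies and smoothness follows. A subtlety to handle carefully is that the ``target'' in this deformation problem involves the non-smooth inertia stack and its Lie algebra, but the strength of linear reductivity --- exactness, not merely semi-simplicity in characteristic zero --- is precisely what is needed to kill both obstruction and higher choices even in mixed characteristic.
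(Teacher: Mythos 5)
Your proof of part~(1) breaks at the very first step. The d\'evissage you propose rests on the identification $\sX^G\simeq(\sX^N)^{Q}$ for an exact sequence $1\to N\to G\to Q\to 1$, together with a ``canonical'' action of $Q$ on $\sX^N$. This is precisely the assertion of \cite{Ro05}, Rem.~2.4 that the present paper shows to be \emph{false}: see Subsection~\ref{failure} and Lemma~\ref{lemma:counterexample_fixed_pts}, where for $\sX=BA$ with $G,N,A$ of multiplicative type and trivial $G$-action one gets $\sX^G=BA\times\Hom(G,A)$ but $(\sX^N)^{G/N}=BA\times\Hom(G/N,A)\times\Hom(N,A)$, and the comparison map fails to be an isomorphism as soon as $\Hom(G/N,A)\neq 0$ (e.g. $G=\mu_{p^2}\supset N=\mu_p$, $A=\GG_m$ in characteristic $p$): the fixed-point stack remembers the extension structure of $G$, which $(\sX^N)^{G/N}$ does not. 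Worse, it is not even clear how to let $Q$ act on $\sX^N$, because the induced $N$-action on $\sX^N$ is only weakly, not strictly, trivial. A second, more local inaccuracy: for a nontrivial action a $T$-point of $\sX^G$ is not a pair $(x,\rho)$ with $\rho:G_T\to x^*I_{\sX}$ a homomorphism; it is a group-theoretic section of the universal stabilizer $\St_{\sX,G}\times_{\sX}T\to G_T$, where $\St_{\sX,G}$ is an extension-type object $1\to I_{\sX}\to \St_{\sX,G}\to G_{\sX}$, so the $\Hom(G,H)$ representability theorems cannot be cited verbatim.

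The paper's induction avoids both problems by proving a \emph{relative} statement instead of a transitivity isomorphism: in Theorem~\ref{th-fixed-pt-stack} one shows directly that $\sX^G\to\sX^N$ is representable, separated and locally of finite presentation. A point of $\sX^N$ is a group-theoretic section $f_0$ of $\St_{\sX,G}\times_G N\to N$, and the fibre of $\sX^G\to\sX^N$ over it is the functor of group-theoretic sections of $\St_{\sX,G}\to G$ extending $f_0$; this is handled by Lemma~\ref{lemma:extending_sections}, which passes to the normalizer of $f_0(N)$ (using purity of $N$), shows the induced map to $Q$ is affine, and then invokes the representability of functors of reductive subgroups (Theorem~\ref{theorem:subgroups_reductive_affine}) resp.\ proper subgroups. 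No action of $Q$ on $\sX^N$ is ever needed. Your identification of the non-flatness of the inertia as the key difficulty is correct, but the machinery must be applied in this ``extension of sections'' form, not through $\sX^G\simeq(\sX^N)^Q$. As for part~(2), your sketch is essentially the paper's argument (Theorem~\ref{thm:smoothness-fixed-points}), with one imprecision: since the lifting problem has a stack as target, the liftings form a torsor under the $G$-Picard stack $x^*\sT_{\sX/S}(I)$, so one needs the vanishing of $H^1(G,P)$ for the sheaf of isomorphism classes \emph{and} of $H^2(G,A)$ for the automorphism sheaf (Lemma~\ref{lemma:torsors_under_Picard_stacks}), not just an $H^1$ obstruction; linear reductivity kills both, so this is reparable, unlike the gap in part~(1).
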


The proof necessitates results on functors of group homomorphisms.
Before stating them,
we bring to the reader's attention the subtle question
of the (non-)affineness of $\Hom(G,H)$, when it is
representable. Quoting \cite{SGA3.2}, Exp.~XI,
Rem.~4.6 we know that if $G$ is of multiplicative type and
$H$ is a closed subgroup of some $\GL_n$, then $\Hom(G,H)$
is a disjoint sum of affine schemes, but `on se gardera de
croire cependant que les pr\'esch\'emas qui repr\'esentent
[ces] foncteurs sont toujours des sommes d'une famille de
sch\'emas aﬃnes sur $S$' (`the reader should refrain
from thinking that the schemes representing these functors
are always a sum of a family of affine $S$-schemes').
In recent work of Brion, the same problem is encountered
and certain conditions (FT) and (AFT) are introduced in order
to best describe this phenomenon;
see \cite{Bri21}, \S~4.2. The statement in item~(3) below
is our contribution to this question, in the present
generality.

The following result is found in
\ref{theorem:Hom_reductive_affine},
\ref{theo:Hom_from_proper_reductive},
\ref{cor:smoothness-Hom-functor}.

\begin{theorem} \label{th-2}
Let $G$ be an $S$-group space that has a finite
composition series whose factors are either reductive or
proper, flat, finitely presented.
Let $H$ be an affine, finitely presented $S$-group scheme.
\begin{trivlist}
\itemn{1} The functor $\Hom(G,H)$ is representable by an
$S$-algebraic space separated and locally of finite presentation.
\itemn{2} The subfunctor of monomorphisms $\Mono(G,H)$ is
representable by an open subspace of $\Hom(G,H)$. Moreover,
all monomorphisms $G\to H$ are closed immersions.
\itemn{3} If $G$ is reductive, $\Hom(G,H)$ is
representable by a scheme with the following property: each
subscheme (resp. closed subscheme) which is quasi-compact
over $S$, is quasi-affine (resp. affine) over $S$.
\itemn{4} If $G$ is linearly reductive and $H$ is flat,
the algebraic stack $\sHom(BG,BH)$ is smooth. In particular,
\begin{trivlist}
\itemm{i} $\Hom(G,H)\to S$ is flat and locally complete
intersection,
\itemm{ii} $\Hom(G,H)\to S$ is smooth if moreover $H\to S$
is smooth.
\end{trivlist}
\end{trivlist}
\end{theorem}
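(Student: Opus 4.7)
My plan is to prove the four parts by combining dévissage along the composition series of $G$ with two separate base cases: $G$ reductive, and $G$ proper, flat, and finitely presented. I expect the main difficulty to lie in the inductive step of the dévissage for part~(1), where the non-flatness of $H$ makes the analysis of the restriction morphism $\Hom(G,H) \to \Hom(N,H)$ and its fibres much more delicate than in the smooth-$H$ treatment of \cite{SGA3.2} Exp.~XI. The smoothness statement (4) is fundamentally deformation-theoretic and is best proved by working on the classifying stacks $BG$ and $BH$.

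\textbf{Parts (1) and (3): representability.} For the reductive base case I would extend the arguments of \cite{SGA3.2} Exp.~XI to allow $H$ affine finitely presented but possibly non-flat. After locally embedding $H \hookrightarrow \GL_n$ as a closed subgroup scheme, $\Hom(G,H)$ becomes a closed subscheme of $\Hom(G, \GL_n)$, and the latter is a disjoint union of affine $S$-schemes by SGA3. This simultaneously yields (3) and the reductive case of (1). For the case where $G$ is proper, flat and finitely presented, I would use Artin-style representability of the Hom stack of morphisms out of a proper flat fp source, restricted to the open-and-closed subfunctor of group homomorphisms. For the inductive step over an exact sequence $1 \to N \to G \to Q \to 1$, the restriction morphism $\Hom(G,H) \to \Hom(N,H)$ has fibre over $\varphi \colon N \to H$ equal to the scheme of extensions of $\varphi$ to $G$; this is acted on by the centraliser $Z_H(\varphi(N))$ and sits above a $Q$-equivariance datum that is itself representable by an auxiliary $\Hom(Q, -)$ functor valued in closed subgroups of $H$. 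Representability of $\Hom(G,H)$ then follows by assembling the pieces, using that normalisers, centralisers and transporters in the affine group $H$ are themselves representable; separatedness and local finite presentation propagate through the induction.

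\textbf{Part (2): monomorphisms.} Openness of $\Mono(G,H) \subset \Hom(G,H)$ follows from a standard fibrewise argument: flatness of $G$ makes being a monomorphism an open condition, detectable on the diagonal. The closed-immersion claim reduces to fibres over geometric points: a monomorphism from a proper scheme to a separated one is automatically a closed immersion, and a monomorphism of algebraic groups from a reductive group to an affine group is classically closed as well. The mixed case is handled by applying these two base cases successively along the composition series.

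\textbf{Part (4): smoothness.} I would analyse the deformation theory of $\sHom(BG, BH)$ at a point $f \colon BG \to BH$ corresponding to $\varphi \colon G \to H$. The tangent complex is naturally identified with $R\Gamma(BG, \Lie(H)_\varphi)$, where $\Lie(H)$ carries the $G$-action via $\mathrm{Ad}\circ\varphi$. Linear reductivity of $G$ states precisely that the invariants functor on quasi-coherent sheaves is exact, whence all higher $G$-cohomology vanishes; in particular, the obstruction group $H^2$ is zero. Combined with local finite presentation from~(1), this yields smoothness of $\sHom(BG, BH)$. The corollaries now follow because $\Hom(G,H) \to \sHom(BG, BH)$ is a torsor under $H$ acting by conjugation: flatness transfers when $H$ is flat (giving (i), with the lci property coming from the perfect cotangent complex of $BH$), and smoothness transfers when $H$ is smooth (giving (ii)).
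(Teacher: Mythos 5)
Your architecture (dévissage along the composition series, separate reductive and proper base cases, deformation theory for smoothness) matches the paper's, but the reductive base case as you propose it does not work, and it is exactly where the whole difficulty of the theorem lies. You reduce to $\Hom(G,\GL_n)$ by ``locally embedding $H\hookrightarrow\GL_n$'': but $H$ is only assumed affine and finitely presented, \emph{not flat} (in the intended application $H$ is an inertia group, which is almost never flat), and a non-flat affine group scheme admits no faithful finite locally free representation in general, so no such embedding exists even Zariski-locally. Moreover the conclusion you would draw from it --- that $\Hom(G,H)$ is closed in a disjoint sum of affine $S$-schemes --- is stronger than what is true; the paper quotes SGA3, Exp.~XI, Rem.~4.6 precisely to warn against this, which is why item~(3) is stated only as ``quasi-compact subschemes are quasi-affine''. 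The paper's actual key case is $G=\GG_m$ with $H$ affine and possibly non-flat: one embeds $\Hom(\GG_m,H)$ as a subfunctor of the affine scheme $\lim_n\Hom(\mu_{\ell^n},H)$ via the density theorem and verifies the eight axioms of Grothendieck's theorem on unramified functors, the hardest axiom resting on a descent statement along schematically dominant morphisms; nothing in your proposal plays this role. The same omission undermines your inductive step: extensions of $\varphi\colon N\to H$ to $G$ are not governed by an auxiliary $\Hom(Q,-)$ functor (the extension $1\to N\to G\to Q\to 1$ need not split); the paper instead classifies graphs as reductive (resp.\ proper flat) subgroups of $\Norm(\Gamma_0)/\Gamma_0$ mapping isomorphically to $Q$, which requires both the representability of the functor of reductive subgroups of an affine \emph{non-flat} group (again resting on the missing key case) and the purity of $N$ to make the normalizer representable.

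Two further points. For (2), openness of $\Mono(G,H)$ is not a ``standard fibrewise argument'': the kernel of the universal homomorphism is not flat over the base, so triviality of its fibres is only constructible a priori, and the paper's proof of stability under generization in the reductive case uses maximal tori, the Geiss--Voigt theorem on infinitesimal multiplicative subgroups, and Vasiu's theorem (or a Frobenius-kernel argument) to exclude infinitesimal unipotent kernels; your sketch does not engage with any of this. For (4), your route is close in spirit to the paper's (which proves smoothness of $\sHom(BG,BH)=[\Hom(G,H)/H]$ and transfers properties along the $H$-torsor $\Hom(G,H)\to\sHom(BG,BH)$, exactly as you say), but as written it identifies the tangent complex with $R\Gamma(BG,\Lie(H))$, which presupposes $H$ smooth: when $H$ is only flat one must work with the two-term co-Lie complex, and one needs a deformation theory for the non-representable morphisms $BG\to BH$. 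The cohomological vanishing you invoke (all higher $G$-cohomology of quasi-coherent coefficients vanishes) is indeed the engine, but the paper packages it via equivariant torsors and gerbes under Picard stacks (vanishing of $H^1(G,P)$ and $H^2(G,A)$) precisely to avoid these foundational gaps, and its own deformation-theoretic remark shows that the naive obstruction computation only closes in the smooth-$H$ case.
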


The assumptions on $G$ and $H$ are close to optimal.
Indeed, it is classical that the presence of unipotent factors
in $G$ is an obstacle to representability; taking $G=\GG_a$
and $H=\GG_m$ for simplicity, this is due to the existence of
exponentials that do not algebrize, violating effectivity
(the axiom called $(F_3)$ in the text). Also and assumption
``affine'' or at least ``quasi-affine'' on $H$ is necessary, as
shown by the example of an elliptic curve with multiplicative
reduction (\cite{SGA3.2}, Expos\'e~IX, Rem.~7.4).

Our third and last main result is about functors of
reductive subgroups of affine, possibly non-flat group schemes.
We refer to \ref{theorem:subgroups_reductive_affine}
and \ref{coro-sub-mult-smooth}.

\begin{theorem} \label{th-3}
Let $H$ be an affine, finitely presented $S$-group scheme.
\begin{trivlist}
\itemn{1} The functor $\Sub_{\red}(H)$ of reductive
subgroups of $H$ is representable by an algebraic space
separated and locally of finite presentation which is
a disjoint sum indexed by the types of reductive groups:
\[
\Sub_{\red}(H)=\underset{\type}{\textstyle\coprod}
\,\Sub_{\type}(H).
\]
\itemn{2} The summand of subgroups {\em of multiplicative type}
\[
\Sub_{\mult}(H)=\underset{\type=[(M,M^*,\varnothing,\varnothing)]}{\textstyle\coprod}
\,\Sub_{\type}(H)
\]
is representable by a {\em scheme} with the following property:
each subscheme (resp. closed subscheme) which is quasi-compact
over $S$, is quasi-affine (resp. affine) over $S$.
\itemn{3} Assume moreover that $H\to S$ is flat. Then
$\Sub_{\mult}(H)\to S$ is flat and locally complete
intersection, and smooth if $H\to S$ is. If $S$ is of
characteristic~0 then $\Sub_{\red}(H)\to S$ is smooth.
\end{trivlist}
\end{theorem}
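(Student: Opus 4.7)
The strategy is to deduce Theorem~\ref{th-3} from Theorem~\ref{th-2} by presenting each summand $\Sub_\type(H)$ as the sheafified quotient of a functor of monomorphisms by an automorphism group, modelled on a fixed split reductive $S$-group scheme $G_0 = G_0(\type)$ of type $\type$ (SGA~3, XXII--XXV). By the classification theorem, any reductive $S'$-group of type $\type$ becomes isomorphic to $G_0$ after an étale cover of $S'$, so every reductive subgroup of $H_{S'}$ of type $\type$ arises, étale-locally, as the image of a monomorphism $G_0 \to H$. By Theorem~\ref{th-2}(2) such a monomorphism is automatically a closed immersion, and $\Mono(G_0, H)$ is an open subspace of the algebraic space $\Hom(G_0, H)$. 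The group scheme $\Aut(G_0)$ is affine and smooth over $S$ (SGA~3, XXIV), and acts freely by precomposition on $\Mono(G_0, H)$, since two monomorphisms with the same image differ by a unique automorphism of $G_0$. The orbit map identifies the fppf-sheaf quotient $\Mono(G_0, H)/\Aut(G_0)$ with $\Sub_\type(H)$, and standard descent for quotients of algebraic spaces by free actions of smooth groups yields that $\Sub_\type(H)$ is an algebraic space, separated and locally of finite presentation. Since the type is locally constant on $S$, $\Sub_\red(H)=\coprod_\type \Sub_\type(H)$, giving~(1).

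For~(2), specialise to $\type = [(M, M^*, \varnothing, \varnothing)]$, so that $G_0 = D(M^*)$ is of multiplicative type and $\Aut(G_0)=\underline{\Aut}(M^*)$ is an étale $S$-group scheme. By Theorem~\ref{th-2}(3), $\Hom(D(M^*), H)$ is a scheme with the stated quasi-affineness property, and the open subscheme $\Mono(D(M^*), H)$ inherits it. Étale-locally on $S$, $\underline{\Aut}(M^*)$ becomes the constant group scheme attached to $\Aut_{\mathbb{Z}}(M^*)$, which acts on any quasi-compact locus through a finite quotient; since the quotient of a quasi-affine (resp.\ affine) scheme by a finite free group action remains quasi-affine (resp.\ affine), the algebraic space $\Sub_\mult(H)$ is in fact a scheme carrying the required property, obtained by gluing these local quotients.

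For~(3), groups of multiplicative type are always linearly reductive, and reductive groups in characteristic~$0$ are too. Assuming $H$ is flat, Theorem~\ref{th-2}(4) gives the smoothness of $\sHom(BG_0, BH)$, whence the flatness and locally-complete-intersection character of $\Hom(G_0, H) \to S$ (and smoothness when $H \to S$ is smooth). These properties descend through the smooth (even étale in the multiplicative case) free quotient map to yield the conclusions of~(3) for $\Sub_\type(H) \to S$. The main technical obstacle I anticipate lies in part~(2), namely the upgrade from ``algebraic space'' to ``scheme with the quasi-affineness property'': one must verify that étale-local finite-group quotients of quasi-affine subschemes of $\Mono(D(M^*), H)$ can be glued into a genuine scheme, and that the quasi-affineness property descends globally rather than merely étale-locally. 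A secondary subtlety, common to all three parts, is the passage from $\Mono(G_0, H)$ to $\Sub_\type(H)$ when $G_0$ is only an étale-local model of type $\type$, requiring fppf descent along the étale trivialisations of reductive torsors to interpret the sheaf quotient over all of $S$.
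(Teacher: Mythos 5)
Your parts (1) and (3) are essentially the paper's own argument: present $\Sub_{\type}(H)$ as the fppf quotient of $\Mono(G(\type),H)$ (open in $\Hom(G(\type),H)$, with monomorphisms automatically closed immersions) by the free action of the smooth group scheme $\Aut(G(\type))$, identify the quotient with $\Sub_{\type}(H)$ via \'etale-local splitting of reductive subgroups, apply Artin's theorem, and for (3) push flatness/lci/smoothness of $\Hom(G(\type),H)$ through the open immersion and the torsor $\Mono(G(\type),H)\to\Sub_{\type}(H)$. That much is fine.

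The genuine gap is in part (2), precisely at the point you flag but do not resolve. For $\type=[(M,M^*,\varnothing,\varnothing)]$ the automorphism group $\Aut(D(M))$ is the constant group scheme on $\Aut_{\ZZ}(M)$, which is in general \emph{infinite} (e.g. $\GL_r(\ZZ)$ for a rank-$r$ torus), and your reduction to ``finite free quotients of quasi-affine schemes'' does not apply: the action does not preserve any quasi-compact locus, it does not factor through a finite quotient on one, and quotients of schemes by free actions of infinite discrete groups need not be schemes without further input. Worse, the preimage in $\Mono(D(M),H)$ of an $S$-quasi-compact subscheme of the quotient is a torsor under an infinite constant group, hence never $S$-quasi-compact, so Theorem~\ref{th-2}(3) cannot be invoked upstairs and then ``descended'' to get the quasi-affine/affine property. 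The paper circumvents the quotient entirely for this point: it maps $\Sub_{M}(H)$ to the affine scheme $L=\lim_n \Sub_{M/nM}(H)$ (each $\Sub_{M/nM}(H)$ being affine by Lemma~\ref{lemma:Hom_proper_separated}) by sending a subgroup to its family of torsion subgroups; the Density Theorem makes this a monomorphism, which is locally of finite type, hence separated and locally quasi-finite, and such morphisms are representable by schemes (Stacks Project, Tag 0418) --- this gives schemeness. The quasi-affineness of $S$-quasi-compact (closed) subschemes is then obtained as in the proof of Theorem~\ref{theorem:Hom_reductive_affine}(ii): take the schematic image in the affine scheme $L$ and use descent along schematically dominant morphisms (Lemma~\ref{lemma:descent-by-sch-dominant-2}) to factor through it. To complete your write-up you would need either this embedding-into-$L$ argument or a genuine substitute for it; the finite-group quotient heuristic is not one.
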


\end{notitle*}

\begin{notitle*}{Main ideas of the proofs}
The proofs of the results \ref{th-2}(1)--(3) and \ref{th-3}(1)--(2)
are intertwined in a single line of reasoning. By d\'evissage
for $G$ we treat separately the reductive case and the proper flat
case. The key case is when $G$ is the multiplicative group $\GG_m$
(the case of proper flat
groups is standard using representability of the Hilbert scheme;
and one passes from general reductive groups to a maximal torus
using a result of \cite{SGA3.2} and then to $\GG_m$). For a
suitably chosen prime $\ell$, the density theorem shows that
$F:=\Hom(\GG_m,H)$ is a subfunctor of the affine scheme
$L:=\lim_n\Hom(\mu_{\ell^n},H)$. To complete the proof, one verifies
the eight axioms of Grothendieck's theorem on unramified functors for
the monomorphism $F\to L$. The axiom which is hardest to check is the
final one, and this is proved using a statement of ``descent along
schematically dominant morphisms'' whose proof is inspired from the
proof of algebraicity of formal homomorphisms.
From these results one deduces~\ref{th-1}(1).

As far as algebraicity statements are concerned, the results for
functors of homomorphisms of group schemes imply results for fixed
point stacks. For the statements of smoothness, things go in the
other direction.
That is, we first prove \ref{th-1}(2) by verifying the infinitesimal
criterion of smoothness for $\sX^G\to S$. For this we use the vanishing
of cohomology of linearly reductive group schemes to show that a
certain stack of liftings, which is a torsor under a certain tangent
stack, is trivial. Then we deduce the flatness
and smoothness statements~\ref{th-2}(4).
\end{notitle*}

\begin{notitle*}{Comments on related work}
The first general results on fixed points and homomorphism
functors are of course due to the work of the precursors
of \cite{SGA3.2}, \cite{SGA3.3}, \cite{Ra70}. These provide
the foundation for the results presented here.

Fixed point stacks are defined and studied in some generality
for actions of proper groups in \cite{Ro05}, of which the
results of the present text can be seen as a natural
continuation. In Subsection~\ref{failure} we take the
opportunity to correct a claim made in
\cite{Ro05}, Rem.~2.4 which turns out to be partially false.
Namely, say that $G,N$ are flat, finitely presented
group schemes with $N$ a normal subgroup of $G$. Assume
that~$G$ acts on a stack $\sX$ (we may take $\sX$ algebraic
and assume that the fixed point and quotient stacks below
are algebraic). Then, we provide an example where
$(\sX^N)^{G/N}$ and $\sX^G$ are {\em not} isomorphic
(in fact it is not clear how to let $G/N$ act on $\sX^N$
and we discuss this issue). On the other hand we prove that
there {\em is} always an isomorphism
of stacks $(\sX/N)/(G/N)\isomto \sX/G$.

In the paper~\cite{AHR20}, Alper, Hall and Rydh show that
when $\sX$ is a Deligne-Mumford stack locally of finite
type over a field with an action of $G=\GG_m$, then
$\sX^G\to\sX$ is a closed immersion. This can be easily
extended to the case where $\sX$ is a Deligne-Mumford stack,
$G$ is smooth with connected fibres, and the base scheme~$S$
is arbitrary. The Deligne-Mumford assumption is essential;
in Example~\ref{example-B-alpha-p} we show that~$\GG_m$ acts
on the classifying stack of $\alpha_p$-torsors (over a base
scheme of characteristic $p>0$) in such a way that
$\sX^{\GG_m}\to\sX$ is not a monomorphism. On the other
hand, if $\sX$ has finite inertia and the
group scheme~$G$ is smooth, the valuative criterion for
properness holds for $\sX^G\to\sX$.
In forthcoming work Aranha, Khan, Latyntsev, Park and Ravi
use this fact to prove a (virtual) Atiyah-Bott formula
under certain hypotheses.

In \cite{SGA3.2}, another feature of the scheme $M:=\Hom(G,H)$
is studied. Namely, for $h\in H$ let $\inn(h):H\to H$ be the
inner automorphism $k\mapsto hkh^{-1}$.
In \cite{SGA3.2}, Exp.~XI, \S~5 it is shown that if $G,H$
are finitely presented with $G$ of multiplicative type and
$H$ affine and smooth, then the morphism
\[
H\times M\too M\times M, \quad (h,v) \longmapsto (\inn(h)\circ v,v)
\]
is smooth. In the case of a base field, this is extended
by Brion~\cite{Bri21} to the situation where $G$ is linearly
reductive and $H$ is locally of finite type (but not
necessarily affine). Following the arguments of \cite{SGA3.2},
Exp.~IX 3.6 and Exp.~XI 2.3, with our running assumptions
`affine of finite presentation' on~$H$, it should
be possible to extend this further to the case where $S$
is arbitrary.

Recent work of Bhatt, Halpern-Leistner, Preygel (see
\cite{Bh16} Lemma~2.5, \cite{BHL17}, Section~2, \cite{HLP19},
Theorem~5.1.1) seems to indicate that it should be possible
to extend our results to algebraic stacks with
{\em quasi-affine} diagonal, and to target group schemes $H$
that are {\em quasi-affine}. We did not explore this possibility.

Moving away from linearly reductive group schemes, general
results on the smoothness of fixed points seem difficult
to obtain. Recent work of Hamilton \cite{Ha21} provides
an interesting attempt in this direction.

Finally we point out that the present text encompasses the
results of the preprint \cite{Ro21} which it supersedes.
\end{notitle*}

\begin{notitle*}{Organization of the paper}
The table of contents after the acknowledgements describes
the plan of the article.
\end{notitle*}

\begin{notitle*}{Acknowledgements}
I wish to express warm thanks to Arkadij Bojko, who provided
the initial stimulus with the question of algebraicity of
fixed point stacks. I thank Michel Brion for conversations
on the topic of this article, and for bringing the
paper \cite{Bri21} to my attention. For various conversations
and feedback, I also thank
Dhyan Aranha, Alice Bouillet, Pierre-Emmanuel Chaput,
Philippe Gille, Luc Illusie, Marion Jeannin, Bernard Le Stum,
Laurent Moret-Bailly, C\'edric P\'epin, Simon Riche and
Angelo Vistoli.

This work was supported by the ANR project CLap-CLap
(ANR-18-CE40-0026-01) and by the Centre Henri Lebesgue
(ANR-11-LABX-0020-01). I would like to thank the
executive and administrative staff of IRMAR and of the
Centre Henri Lebesgue for creating an attractive mathematical
environment.
\end{notitle*}

\tableofcontents

\section{Homomorphisms from a diagonalizable group}
\label{section:2}

The proof of Theorem~\ref{th-2} builds on the key case
where $G$ is a diagonalizable group scheme $D(M)$.
In this section, we establish representability in that case.
In Subsection~\ref{ssection:statement-and-reduction}
we state the result we want to prove and we reduce it to
the more specific statement~\ref{simplified-situation}.
In Subsection~\ref{section:3} we prove a crucial descent
statement used in Subsection~\ref{sec:repres-using-groth}
to complete the proof of~\ref{simplified-situation}
by verifying the conditions of Grothendieck's theorem on
unramified functors.

\subsection{Statement and first reductions}
\label{ssection:statement-and-reduction}

\begin{theorem} \label{theo:Hom_for_mult_groups}
Let $G,H$ be finitely presented $S$-group schemes with
$G$ diagonalizable and $H$ affine. Then $\Hom(G,H)$ is
representable by an $S$-scheme separated and locally of
finite presentation.
\end{theorem}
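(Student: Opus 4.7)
The plan is to follow the strategy outlined in the introduction: use dévissage on $G$ to isolate two atomic cases, then handle the decisive case $G=\GG_m$ by exhibiting $\Hom(\GG_m,H)$ as a monomorphism into an affine limit scheme and invoking Grothendieck's theorem on unramified functors.

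Since $G=D(M)$ for some finitely generated abelian group $M$, one splits $M=\ZZ^{r}\oplus M_{\mathrm{tors}}$. A homomorphism $D(M_{1}\oplus M_{2})\to H$ amounts to a pair of homomorphisms out of the two factors whose images centralize each other, and this commutation condition defines a closed subfunctor of a product because $H\to S$ has affine, hence separated, diagonal. Iterating, it suffices to treat separately the cases $G=\GG_{m}$ and $G=D(N)$ with $N$ a finite abelian group. The latter case is essentially formal: as $G\to S$ is then finite locally free and $H\to S$ is affine of finite presentation, the Weil restriction $\Res_{G/S}(H)$ is representable by an affine $S$-scheme of finite presentation, and $\Hom(G,H)$ is the closed subfunctor cut out inside it by the equations expressing compatibility with the group laws.

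For the critical case $G=\GG_{m}$, I would choose, locally on $S$, a prime $\ell$ invertible on the piece of $S$ under consideration, and consider the tower of closed subgroup schemes $\mu_{\ell^{n}}\into \GG_{m}$. Since $\bigcup_{n}\mu_{\ell^{n}}$ is then schematically dense in $\GG_{m}$, restriction yields a monomorphism of functors
\[
F:=\Hom(\GG_{m},H)\too L:=\varprojlim_{n}\Hom(\mu_{\ell^{n}},H).
\]
By the finite case already handled, each $\Hom(\mu_{\ell^{n}},H)$ is affine of finite presentation over $S$, and the transition maps being affine, $L$ is representable by an affine $S$-scheme. It then suffices to show that $F\into L$ is representable, separated, and locally of finite presentation; this is done by verifying the eight axioms of Grothendieck's theorem on unramified functors applied to this monomorphism. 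The sheaf-theoretic axioms, the compatibility with filtered limits, and the unramifiedness are formal, given the group-scheme nature of $F$ and the affineness of $H$.

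The main obstacle, and the reason Subsection~\ref{section:3} is devoted to a ``descent along schematically dominant morphisms'' statement, is the effectivity axiom: a compatible family of homomorphisms $\GG_{m,R_{n}}\to H_{R_{n}}$ over the Artinian reductions of a complete local Noetherian $S$-ring $R$ must algebrize to a homomorphism $\GG_{m,R}\to H_{R}$. Restricting to the $\mu_{\ell^{n}}$ and algebrizing inside the ambient affine scheme $L$ produces a genuine $R$-homomorphism $\varinjlim_{n}\mu_{\ell^{n}}\to H_{R}$ defined on the schematically dense torsion subgroup of $\GG_{m,R}$; the descent lemma will convert this into a bona fide homomorphism $\GG_{m,R}\to H_{R}$. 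Once this tool is available, the scheme structure, separatedness, and local finite presentation of $\Hom(\GG_{m},H)$ are inherited from the affine scheme $L$ via the unramified monomorphism $F\into L$, and gluing over a cover of $S$ by loci on which a suitable $\ell$ is invertible yields the theorem on all of $S$.
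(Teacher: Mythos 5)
Your skeleton is the paper's (the same splitting of $D(M)$ into a finite diagonalizable part and copies of $\GG_m$, the same affine limit $L=\lim_n\Hom(\mu_{\ell^n},H)$, the same appeal to Grothendieck's theorem on unramified functors), but the step you yourself single out as decisive is handled by an argument that does not work as stated. You propose to verify effectivity by restricting the $f_k$ to the $\mu_{\ell^n}$, algebrizing inside the affine scheme $L$, and then letting the descent lemma ``convert'' the resulting homomorphism $\mu_{\ell^\infty}\to H_R$, defined on the schematically dense torsion, into a homomorphism $\GG_{m,R}\to H_R$. No such conversion statement is true, and it is not what descent along schematically dominant morphisms says: Lemmas~\ref{lemma:descent-by-sch-dominant-1}--\ref{lemma:descent-by-sch-dominant-2} concern morphisms of \emph{base} schemes $T'\to T$ with $\cO(T)\to\cO(T')$ injective, not dense subgroup schemes of the source $\GG_m$. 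If a compatible family of torsion homomorphisms over a complete noetherian local ring always extended to $\GG_m$, the monomorphism $F\to L$ would be surjective on such points, which already fails over a field: for $H=\GG_m$ and $\ell$ invertible one has $F(k)=\ZZ\subsetneq\ZZ_\ell=L(k)$. The correct treatment of $(F_3)$ must use the homomorphisms $f_k$ over the Artinian quotients themselves, not only their torsion restrictions: either algebrize the formal homomorphism by \cite{SGA3.2}, Exp.~IX, Th.~7.1 and then check that the algebraization still extends the $u_n$ by $m$-adic separatedness of $A[z]/(z^{\ell^n}-1)$ (this is what the paper does), or apply Lemma~\ref{lemma:descent-by-sch-dominant-1} to the schematically dominant morphism $\coprod_k\Spec(A/m^k)\to\Spec(A)$, using that each $f_k$ is an element of $F(A/m^k)$.

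The second gap is in the axioms you declare formal. Since $L$ is in general not locally of finite type over $S$ (hence not noetherian) and $\Hom(\GG_m,H)\to S$ is not unramified (cocharacter schemes have positive-dimensional conjugation orbits), Grothendieck's theorem can be applied neither with base $S$ nor with base $L$; it must be applied, for each affine test scheme $T\to L$ reduced to finite type over $\ZZ$, to the fibre functor ``homomorphisms $\GG_{m,T}\to H_T$ extending the fixed family $u_{n,T}$'' as in~\ref{simplified-situation}. In that setting $(F_2)$ is \emph{not} formal: extending all the $u_n$ is an infinite family of conditions, and showing it is detected at a finite stage of a filtered colimit requires the dedicated argument of~\ref{condition_F2}, which exploits noetherianity of $R$ and the relations $z_{n,j}=\chi_n(z_{n_0,j})$. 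And you never address $(F_8)$, which is the axiom that genuinely requires the descent lemmas, via the existence of a largest closed subscheme to which a point of $F$ extends (Lemma~\ref{lemma-maximal-closed}). So the route is the right one, but the two non-formal verifications are respectively replaced by an invalid argument and omitted; as written the proposal does not yet prove the theorem.
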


\begin{proof}
The group $G$ is a product $G=N\times \GG_m^r$ where~$N$
is finite diagonalizable. For a product $G=G_1\times G_2$,
the functor $\Hom(G,H)$ is the subfunctor of
$\Hom(G_1,H)\times \Hom(G_2,H)$ composed of pairs of maps
that commute. Using \cite{SGA3.2}, Exp.~VIII, 6.5.b) we
see that this is a closed subfunctor. It follows
that if the theorem is true for $G_1$ and $G_2$ then it is
true for $G$, hence it is enough to consider the factors
individually. If $G=N$ is finite, it is classical and
recalled in Lemma~\ref{lemma:Hom_proper_separated} that
$\Hom(N,H)$ is representable by an affine $S$-scheme.
It remains to handle the case $G=\GG_m$, which we now do.

The assumptions and conclusions of the theorem being local
for the Zariski topology on $S$, we can assume that $S$
is affine. Since $G$ and $H$ are of finite presentation,
with the usual results on limits (\cite{EGA}~IV, \S~8)
we see that $\Hom(G,H)\to S$ is locally of finite
presentation. Consequently we can further reduce to the
case where $S$ is of finite type over $\Spec(\ZZ)$.

For a prime number $\ell$ let $S_{\ell}\subset S$ be the
open subscheme where~$\ell$ is invertible. Choose
two distinct primes $\ell,\ell'$ and write
$S=S_{\ell}\cup S_{\ell'}$. Since the question of
representability is local on $S$, it is enough to handle
$S_{\ell}$ and $S_{\ell'}$ separately. In this way we
reduce to the case where $\ell\in \cO_S^\times$.

Let $\mu_{\ell^n}\subset \GG_m$ be the group scheme of
$\ell^n$-th roots
of unity. By Lemma~\ref{lemma:Hom_proper_separated} again,
the functor $\Hom(\mu_{\ell^n},H)$ is representable by an
affine $S$-scheme. In particular, the morphisms
$\Hom(\mu_{\ell^{n+1}},H)\to \Hom(\mu_{\ell^n},H)$ are
affine so the limit
\[
L\defeq\lim_n \Hom(\mu_{\ell^n},H)
\]
is representable by a scheme which is affine over $H$,
hence over $S$ also.
By restricting morphisms to the torsion
subschemes, we have a map of functors~:
\[
\varphi:\Hom(\GG_m,H) \too L,\quad
f\longmapsto \{f_{|\mu_{\ell^n}}\}_{n\ge 0}.
\]
It is enough to prove that $\varphi$ is representable by
schemes. For this let $T$ be an $S$-scheme and let $T\to L$
be a map, that is, a compatible collection
$\{u_n:\mu_{\ell^n,T}\to H_T\}$ of morphisms of $T$-group
schemes. We want to prove that the fibred product
$\Hom(\GG_m,H) \times_L T$ is representable. For this we
change our notation, rename $T$ as $S$, reduce to the
case where $T$ is affine as before, and the result is
exactly Theorem~\ref{simplified-situation} below.
\end{proof}

We have thus reduced the proof
of~\ref{theo:Hom_for_mult_groups} to the following
statement, whose proof occupies the rest of the section.

\begin{theorem} \label{simplified-situation}
Let $\ell$ be a prime number, $S=\Spec(R)$ an affine
$\ZZ[1/\ell]$-scheme of finite type, $H$ a finitely
presented affine $S$-group scheme, and
$\{u_n:\mu_{\ell^n}\to H\}_{n\ge 0}$ a family of
morphisms of $S$-group schemes such that $u_{n+1}$
extends $u_n$ for each $n$. Let $F$ be the functor
defined for all $S$-schemes $T$ by:
\[
F(T)=\big\{\mbox{morphisms of groups } f:\GG_{m,T}\to H_T
\mbox{ that extend the $u_{n,T}$, $n\ge 0$}\big\}.
\]
Then $F$ is representable by an $S$-scheme separated and
locally of finite presentation.
\end{theorem}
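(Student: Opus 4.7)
The strategy is to exhibit $F$ as a monomorphism $\iota : F \hookrightarrow L$ into the affine $S$-scheme $L = \lim_n \Hom(\mu_{\ell^n}, H)$, and then apply Grothendieck's representability criterion for unramified functors (the eight axioms) to the pair $(F, L, \iota)$. That $\iota$ is a monomorphism follows from the density theorem: since $\ell \in \cO_S^{\times}$, the ind-subscheme $\bigcup_n \mu_{\ell^n}$ is schematically dense in $\GG_m$ over $S$, so combined with separatedness of $H \to S$, two homomorphisms $\GG_{m,T} \to H_T$ that coincide on every $\mu_{\ell^n,T}$ must be equal.

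Most of the eight axioms are then either inherited from $L$ or verified by routine arguments. The sheaf property (for the fpqc topology) is automatic for $\Hom$-functors of $S$-group schemes. Local finite presentation of $F$ over $S$ follows from finite presentation of $\GG_m$ and $H$ via the usual limit argument, the compatibility condition involving only the fixed data $(u_n)$ already defined over~$S$. Formal unramifiedness of $\iota$ reduces to the monomorphism property applied over square-zero thickenings, so the $(F_4)$-style axiom is immediate. The openness and Artin-style axioms for $(F_5)$--$(F_8)$ rest on the deformation theory of homomorphisms from diagonalizable groups, where the relevant obstruction groups controlling infinitesimal deformations of $\GG_m$-actions on affine group schemes can be analyzed directly because $\GG_m$ is rigid in characteristic prime to $\ell$; together with the affine structure of $L$, this controls Artinian effectivity.

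The principal difficulty, and the content of Subsection~\ref{section:3}, is the effectivity axiom $(F_3)$ at the level of complete local Noetherian rings. Given $(A, \mathfrak{m})$ complete local over $S$ and a compatible family $\{f_n \in F(A/\mathfrak{m}^{n+1})\}_{n \ge 0}$ whose image in $\lim L(A/\mathfrak{m}^{n+1})$ lifts to an element of $L(A)$, one must algebrize the associated formal homomorphism $\widehat{f} : \widehat{\GG_{m,A}} \to \widehat{H_A}$ to a genuine morphism $f : \GG_{m,A} \to H_A$. The crucial point is that $\widehat{f}$ restricts on each closed subscheme $\mu_{\ell^k,A}$ to the \emph{algebraic} morphism $u_{k,A}$ (not merely a formal one), so the potential algebrization is pinned down on the schematically dominant subscheme $\bigcup_k \mu_{\ell^k,A} \subset \GG_{m,A}$. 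The descent-along-schematically-dominant-morphisms statement to be proved in Subsection~\ref{section:3}, modelled on the proof of algebraicity of formal homomorphisms, will then promote $\widehat{f}$ to an honest $f \in F(A)$. This is the step I expect to be the main obstacle. Once all axioms are verified, Grothendieck's theorem delivers representability of $F$ by an $S$-scheme; local finite presentation follows from the limit argument above, and separatedness follows from $F \hookrightarrow L$ being a monomorphism into the affine $S$-scheme $L$.
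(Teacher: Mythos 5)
Your overall architecture is the same as the paper's: the Density Theorem makes $F\to S$ a monomorphism sitting inside the affine limit $L=\lim_n\Hom(\mu_{\ell^n},H)$, and representability is extracted from Grothendieck's theorem on unramified functors together with a descent statement along schematically dominant morphisms whose proof is modelled on SGA3, Exp.~IX, \S~7. Your idea of using that descent statement for the effectivity axiom $(F_3)$ would in fact work: since $\cap_k m^k=0$, the morphism $\amalg_k\Spec(A/m^k)\to\Spec(A)$ satisfies the hypothesis of Lemma~\ref{lemma:descent-by-sch-dominant-1}, and since $F$ has at most one point over any base this immediately produces the algebraization. (The paper disposes of $(F_3)$ even more cheaply, by quoting SGA3, Exp.~IX, Th.~7.1 for the ambient functor $\Hom(\GG_m,H)$ and checking compatibility with the $u_n$ via $m$-adic separatedness of $A[z]/(z^{\ell^n}-1)$.) So $(F_3)$ is not the bottleneck you think it is.

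The genuine gap is that the two axioms you wave through are exactly the ones that require real work. For $(F_2)$, the ``usual limit argument'' only handles $\Hom(\GG_m,H)$; the scheme $L$ is not locally of finite type over $S$, and the condition ``$f$ extends $u_{n}$ for all $n\ge 0$'' is an infinite family of closed conditions, so descending it to a single finite level $A_\alpha$ is not automatic. The paper's verification (\ref{condition_F2}) needs the observation that beyond some $n_0$ the restriction to $\mu_{\ell^n}$ is forced by the restriction to $\mu_{\ell^{n_0}}$ (a degree bound on the Laurent-polynomial comorphism), and that the resulting discrepancies are defined over the noetherian ring $R$, so finitely many of them control all the rest. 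More seriously, for $(F_7)$ and especially $(F_8)$ --- which the paper singles out as the hardest axiom --- your appeal to deformation theory and rigidity of $\GG_m$ does not engage the statement at all: $(F_8)$ asserts that non-liftability at the generic point persists over a nonempty open set, and the proof goes through the construction, for a point $\xi':T'\to F$, of the largest closed subscheme of $T$ to which $\xi'$ extends (Lemma~\ref{lemma-maximal-closed}); that construction is precisely where the schematically-dominant descent statement is needed, in its strong non-quasi-compact form $A/\cap I_\alpha\hookrightarrow\prod A/I_\alpha$ (and its quasi-compact form gives $(F_7)$). Since your plan reserves the descent statement for $(F_3)$ and offers no substitute mechanism for $(F_7)$--$(F_8)$, the proof as proposed stalls at exactly the step the whole section is designed to overcome.
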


\subsection{Descent along schematically dominant morphisms}
\label{section:3}

We keep all notations as in~\ref{simplified-situation}.
The Density Theorem (\cite{SGA3.2}, Exp.~IX,
Th\'eor\`eme~4.7 and Remark~4.10) implies that $F(T)$
contains at most one point; that is, $F\to S$ is a
monomorphism.
To prove that~$F$ is representable, we will use
Grothendieck's theorem on unramified functors. The
verification that~$F$ fulfills the conditions of the
theorem will be based to a large extent on the following
fact: the map $F(T)\to F(T')$ is an isomorphism for all
schematically dominant morphisms of schemes $T'\to T$.
This is Lemma~\ref{lemma:descent-by-sch-dominant-2} below.
Its proof
will use a variation on the argument used to show that
formal homomorphisms from a group scheme of multiplicative type
to an affine group scheme are algebraic, see \cite{SGA3.2}
Exp.~IX, \S~7. It is the purpose of this subsection to
settle this.

We work over a $\ZZ[1/\ell]$-algebra $A$.

\begin{notitle}{$\ell$-power roots of unity}
\label{mu_ell_infty}
We consider the scheme of $\ell$-power roots of unity:
\[
\mu_{\ell^\infty}=\colim \mu_{\ell^n}.
\]
This is the disjoint sum of the schemes of {\em primitive}
roots of unity:
\[
\mu_{\ell^\infty}=
\underset{n\ge 0}{\coprod}\,\mu_{\ell^n}^*.
\]
If $\Phi_n$ denotes the $\ell^n$-th cyclotomic polynomial,
we have $A[\mu_{\ell^n}^*]=A[z]/(\Phi_n)$ and
\[
A[\mu_{\ell^\infty}]=\prod_{n\ge 0}A[z]/(\Phi_n).
\]
The restriction of functions is a canonical injective
morphism:
\[
c:A[\GG_m]\intoo A[\mu_{\ell^\infty}]
\]
which we describe further below.
\end{notitle}

\begin{notitle}{Cyclotomic expansion of Laurent polynomials}
\label{cyclotomic-expansion}
For relative integers $i\le j$ let
\[
A(i;j)=\bigg\{P=\sum_{i\le s\le j}a_sz^s \in A[z^{\pm 1}]
\bigg\}
\]
be the module of Laurent polynomials whose monomials
have degree in the range $\{i,\dots,j\}$.
\end{notitle}

\begin{lemma} \label{euclidean_division}
Each nonzero Laurent polynomial $P\in A[z^{\pm 1}]$
has a unique expression
\[
P=r_0+r_1(z-1)+r_2(z^\ell-1)+\dots+r_n(z^{\ell^{n-1}}-1)
\]
with $r_i\in A(-\lfloor \varphi(\ell^i)/2\rfloor;
\varphi(\ell^i)-\lfloor \varphi(\ell^i)/2\rfloor-1)$
and $r_n\ne 0$. In other words we have a decomposition
\[
A[z^{\pm 1}]=\bigoplus_{n\ge 0} D_n
\]
into sub-$A$-modules
$D_n\defeq A(-\lfloor \varphi(\ell^n)/2\rfloor;
\varphi(\ell^n)-\lfloor \varphi(\ell^n)/2\rfloor-1)
\cdot (z^{\ell^{n-1}}-1)$.
\end{lemma}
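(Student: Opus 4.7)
The essential content is the direct-sum decomposition $A[z^{\pm 1}] = \bigoplus_{n \ge 0} D_n$; from it, the existence and uniqueness of the displayed expansion of a nonzero $P$ (with $n$ the largest index such that $r_n \ne 0$) follow at once, since multiplication by $z^{\ell^{n-1}} - 1$, whose leading coefficient is $1$, is injective on $A[z^{\pm 1}]$. Set $a_n = \lfloor \varphi(\ell^n)/2 \rfloor$, $c_n = \varphi(\ell^n) - a_n - 1$ and, for $n \ge 1$, $b_n = \ell^{n-1} + c_n$ (with $b_0 = 0$). A direct computation using $\varphi(\ell^n) = \ell^{n-1}(\ell - 1)$ gives $a_n + b_n + 1 = \ell^n$, and the intervals $[-a_n, b_n]$ are nested and eventually cover $\ZZ$; since $A[z^{\pm 1}] = \bigcup_n A(-a_n; b_n)$, the problem reduces by induction on $n$ to proving
\[ A(-a_n; b_n) = \bigoplus_{k=0}^n D_k. \]
The base case $n = 0$ reads $A(0; 0) = A = D_0$.

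For the induction step, consider the short exact sequence
\[ 0 \to A(-a_{n-1}; b_{n-1}) \to A(-a_n; b_n) \to Q_n \to 0, \]
where $Q_n$ is $A$-free on $\{z^j : j \in [-a_n, -a_{n-1} - 1] \cup [b_{n-1} + 1, b_n]\}$ and thus of rank $\varphi(\ell^n)$. Combined with the induction hypothesis, this reduces the claim to the assertion that the composition
\[ \bar\Psi_n : A(-a_n; c_n) \xrightarrow{\,\cdot (z^{\ell^{n-1}} - 1)\,} A(-a_n; b_n) \twoheadrightarrow Q_n \]
is an isomorphism of free $A$-modules of common rank $\varphi(\ell^n)$. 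Since $\bar\Psi_n$ is given by a matrix with integer entries, it suffices to treat $A = \ZZ$ and check that the determinant is $\pm 1$.

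For this, I split source and target: $L_s = [-a_n, -a_{n-1} - 1]$, $R_s = [-a_{n-1}, c_n]$, $L_t = L_s$, $R_t = [b_{n-1} + 1, b_n]$. The identity $b_{n-1} + 1 = \ell^{n-1} - a_{n-1}$, a rewriting of $\varphi(\ell^{n-1}) = \ell^{n-1} - \ell^{n-2}$, shows that $\sigma : L_s \sqcup R_s \to L_t \sqcup R_t$, defined as the identity on $L_s$ and as translation by $\ell^{n-1}$ on $R_s$, is a bijection. In the induced bases, the diagonal entry of the matrix at $(\sigma(k), k)$ is $-1$ if $k \in L_s$ (from the summand $-z^k$) and $+1$ if $k \in R_s$ (from $z^{k + \ell^{n-1}}$). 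A direct check of supports then shows that the remaining contributions, namely the other summand of $z^{k + \ell^{n-1}} - z^k$ whenever it does not fall in the deleted small range $[-a_{n-1}, b_{n-1}]$, stay within their own block, landing strictly below the diagonal in the $L$-block and strictly above the diagonal in the $R$-block. The matrix is therefore block-diagonal with triangular $\pm 1$ blocks, of determinant $(-1)^{a_n - a_{n-1}}$, and the induction is complete.

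The main obstacle is this last support check: according to the relative sizes of $\ell^{n-1}$, $a_n - a_{n-1}$ and $c_n - b_{n-1}$, the auxiliary monomials in $z^{k + \ell^{n-1}} - z^k$ may or may not contribute to $Q_n$, and one must verify, using the elementary inequalities between $a_n, c_n, b_n$ and their predecessors, that whenever they do contribute they fall in the correct block on the correct side of the diagonal.
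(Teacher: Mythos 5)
Your argument is correct, but it follows a genuinely different route from the paper's. The paper proceeds algorithmically: after multiplying $P$ by a suitable power of $z$ to land in $A(0;\ell^n-1)$, it runs successive Euclidean divisions by the cyclotomic polynomials $\Phi_0,\Phi_1,\Phi_2,\dots$, using that the prescribed coefficient modules are free of rank $\deg\Phi_i$ and hence serve as modules of representatives modulo $\Phi_i$; existence comes from termination of the algorithm (strictly decreasing degrees) and uniqueness from the fact that each remainder is forced. You instead prove the direct-sum statement structurally: you exhaust $A[z^{\pm 1}]$ by the nested free modules $A(-a_n;b_n)$ of rank $\ell^n$ and show by induction that $\bigoplus_{k\le n}D_k=A(-a_n;b_n)$, reducing to the unimodularity over $\ZZ$ of an explicit matrix. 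I verified the support inequalities you flag as the main obstacle: for $k$ in the left source block one has $k+\ell^{n-1}\le \ell^{n-1}-a_{n-1}-1=b_{n-1}$, so the extra monomial either dies in the deleted range or lands in the left target block strictly below the diagonal, and for $k$ in the right source block one has $k\ge -a_{n-1}$, so the extra monomial either dies or lands in the right target block strictly above the diagonal; hence the matrix is indeed block-diagonal with triangular $\pm 1$ blocks and the induction closes. (Only a cosmetic slip: for $n=1$ the identity $b_0+1=\ell^0-a_0$ is not an instance of $\varphi(\ell^{n-1})=\ell^{n-1}-\ell^{n-2}$, but it holds trivially since $a_0=b_0=0$.) Your route buys uniqueness for free from the direct-sum decomposition and makes base-ring independence transparent, since everything reduces to a determinant equal to $\pm 1$ over $\ZZ$; the paper's route is more computational but exhibits the expansion algorithm explicitly and displays the role of the coefficient modules as systems of representatives modulo the $\Phi_i$, which is the form in which the lemma is exploited immediately afterwards.
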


\begin{proof}
Let $\deg$ be the degree and $\val$ the valuation.
If $P$ is constant, the result is clear. Otherwise, there
is $n\ge 1$ minimal with the property that
\[
-\lfloor \varphi(\ell^n)/2\rfloor\le \val(P)\le \deg(P)
<\ell^n-\lfloor \varphi(\ell^n)/2\rfloor.
\]
Let $Q_0=z^{\lfloor \varphi(\ell^n)/2\rfloor}P$, so we have:
\[
0\le \val(Q_0)\le \deg(Q_0)<\ell^n.
\]
Let $B_n=A(-\lfloor \varphi(\ell^n)/2\rfloor;
\varphi(\ell^n)-\lfloor \varphi(\ell^n)/2\rfloor-1)
\cdot z^{\lfloor \varphi(\ell^n)/2\rfloor}$ be
the $z^{\lfloor \varphi(\ell^n)/2\rfloor}$-translate of the
$A$-module in the statement. For each $i$ the module
$B_i$ is finite free of rank
$\varphi(\ell^i)=\deg(\Phi_i)$, hence
\[
\rho_i:B_i\intoo A[z]\too A[z]/(\Phi_i)
\]
is an isomorphism and $B_i$ can serve as a module of
representatives of residue classes for Euclidean division
modulo $\Phi_i$. We define a sequence of polynomials
$\{Q_i\}$ by running the division algorithm:
\begin{itemize}
\item we set $s_0=\rho_0^{-1}(Q_0\modulo\Phi_0)$
and get a division $Q_0=s_0+\Phi_0Q_1$;
\item inductively, while $Q_i\ne 0$ we set
$s_i=\rho_i^{-1}(Q_i\modulo\Phi_i)$ and get a
division $Q_i=s_i+\Phi_iQ_{i+1}$.
\end{itemize}
Since the sequence $\{\deg(Q_i)\}_{i\ge 0}$ is strictly
decreasing, the process eventually stops. We obtain the
desired expression for~$P$ by setting
$r_i=z^{-\lfloor \varphi(\ell^n)/2\rfloor}s_i$.
\end{proof}

Like in the proof of Lemma~\ref{euclidean_division},
since $\text{rank}(D_i)=\varphi(\ell^n)$ the map
$D_n\into A[z^{\pm 1}]\to A[z^{\pm 1}]/(\Phi_n)$
is an isomorphism.

\begin{lemma} \label{lemma:map-c}
Let $c:A[\GG_m]\to A[\mu_{\ell^\infty}]=
\prod_{n\ge 0}A[z]/(\Phi_n)$,
$P\mapsto (P\modulo\Phi_0,P\modulo\Phi_1,
P\modulo\Phi_2,\dots)$ be the map of~\S\ref{mu_ell_infty}.
\begin{trivlist}
\itemn{1}
For an element $q_i\in A[z]/(\Phi_i)$, write $P_i$
the unique Laurent polynomial in $D_i$ with
$q_i=P_i\modulo\Phi_i$. The image of $c$
is the set of families $(q_0,q_1,q_2,\dots)$ such that there
is $N$ such that $q_n=P_0+P_1+\dots+P_N\modulo\Phi_n$
for all $n\ge N$. In this case, we have
$(q_0,q_1,q_2,\dots)=c(P)$ with $P=P_0+P_1+\dots+P_N$.
\itemn{2} If $A\to B$ is an injective ring homomorphism,
the commutative square of inclusions
\[
\begin{tikzcd}
A[\GG_m] \ar[r,hook] \ar[d,hook] & A[\mu_{\ell^\infty}]
\ar[d,hook] \\
B[\GG_m] \ar[r,hook] & B[\mu_{\ell^\infty}].
\end{tikzcd}
\]
is cartesian.
\end{trivlist}
\end{lemma}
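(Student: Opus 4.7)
The key engine for part~(1) is the divisibility $z^{\ell^{k-1}} - 1 = \Phi_0 \Phi_1 \cdots \Phi_{k-1}$, which makes every $P_k \in D_k$ vanish modulo $\Phi_n$ for $n < k$. In particular the reduction of $P_0 + \cdots + P_N$ modulo $\Phi_n$ depends only on $P_0, \ldots, P_{\min(n, N)}$, so the a priori infinite system of constraints on $(q_n)$ collapses to a finite computation at each level. For the ``only if'' direction, given $P \in A[\GG_m]$ I would apply Lemma~\ref{euclidean_division} to write $P = P'_0 + \cdots + P'_N$ with $P'_k \in D_k$, and set $q_n := P \bmod \Phi_n$. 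An induction on $n$ then identifies each $P'_n$ with the $P_n$ of the statement (read as the unique element of $D_n$ making $P_0 + \cdots + P_n \equiv q_n$ modulo $\Phi_n$, which is unambiguous since $D_n \to A[z]/(\Phi_n)$ is an isomorphism). The stability condition for $n \ge N$ then reduces to $q_n = (P'_0 + \cdots + P'_N) \bmod \Phi_n = P \bmod \Phi_n$, which is the definition of $q_n$. Conversely, given a family $(q_n)$ satisfying the stability, setting $P := P_0 + \cdots + P_N$ and applying the same divisibility shows $c(P)_n = q_n$ for every $n$, either by construction (for $n \le N$) or by hypothesis (for $n \ge N$).

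For part~(2), the plan is to combine injectivity of $c_A$ and $c_B$ with the constructive characterization of part~(1). Injectivity follows because any element of $\ker c$ is divisible by every $\Phi_n$, and (after clearing a power of $z$ to reduce to an ordinary polynomial) must be zero since $\deg \Phi_n = \varphi(\ell^n)$ is unbounded. Given $P \in B[\GG_m]$ with $c_B(P) \in A[\mu_{\ell^\infty}]$, write $c_B(P) = (q_n)$ with $q_n \in A[z]/(\Phi_n)$. The formation of $D_n$ commutes with the inclusion $A \hookrightarrow B$, in the sense that $D_n^A = D_n^B \cap A[z^{\pm 1}]$ inside $B[z^{\pm 1}]$, and the isomorphisms $D_n^A \cong A[z]/(\Phi_n)$, $D_n^B \cong B[z]/(\Phi_n)$ fit into an obvious square. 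Consequently, the lifts $P_n$ attached to $(q_n)$ via~(1) lie in $D_n^A$, and by~(1) applied over $B$ they satisfy $P_n = 0$ for $n > N$. Thus $P = P_0 + \cdots + P_N$ (by injectivity of $c_B$) already lies in $A[\GG_m]$, as required.

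The main obstacle is the bookkeeping in part~(1), where one must reconcile two a priori distinct decompositions: the cyclotomic expansion of $P$ from Lemma~\ref{euclidean_division}, and the lifts $P_n$ built from the coordinates $q_n$. Identifying them by induction, using the vanishing of $D_k$ modulo $\Phi_n$ for $n < k$, is the substantive computation; once this is done, part~(2) follows formally from the explicit and coefficient-ring-compatible nature of the characterization.
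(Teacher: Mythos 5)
Your argument is correct and takes essentially the same route as the paper: both proofs rest on the decomposition $A[z^{\pm 1}]=\bigoplus_n D_n$ of Lemma~\ref{euclidean_division} and on divisibility by the factors $z^{\ell^{k-1}}-1$, and both deduce (2) from the explicit, coefficient-compatible description in (1). The one genuine divergence is in your favour. Your parenthetical re-reading of $P_n$ as the unique element of $D_n$ with $P_0+\cdots+P_n\equiv q_n$ modulo $\Phi_n$ is not mere bookkeeping: since $z^{\ell^{k-1}}-1=\Phi_0\cdots\Phi_{k-1}$, an element of $D_k$ vanishes modulo $\Phi_n$ only for $n<k$, so the components of index $<n$ of a Laurent polynomial do contribute modulo $\Phi_n$, and with the literal $P_n$ (the lift of $q_n$ alone) the stated characterization would fail (already for $\ell=2$ and $P=z$ one finds $P_0=1$, $P_1=(z-1)/2$, and no partial sum $P_0+\cdots+P_N$ recovers $z$). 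The paper's own proof passes over this via the reversed divisibility ``$D_i\subset(z^{\ell^{i}}-1)A[z^{\pm 1}]$, hence $P_i\equiv 0$ modulo $\Phi_j$ for $j\ge i$'', which does not match the definition of $D_i$; your induction identifying the cyclotomic components of $P$ with the inductively defined $P_n$ is the correct form of that step, with the divisibility stated in the right direction. For (2) the paper only says it follows from the description in (1); your spelled-out version (the compatibility $D_n^A=D_n^B\cap A[z^{\pm 1}]$ together with the induction showing that the lifts attached to an $A$-valued family lie in $D_n^A$) is exactly the argument that is needed, and it is sound.
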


\begin{proof}
(1) If $(q_0,q_1,q_2,\dots)=c(P)$, we can write
$P=P_0+P_1+\dots+P_N$ with $P_i\in D_i$. Since
$D_i\subset (z^{\ell^i}-1)A[z^{\pm 1}]$ and
$\Phi_i$ divides $z^{\ell^i}-1$, we have
$P_i=0\modulo\Phi_j$ for all $j\ge i$. Therefore
for all $n\ge N$ we have:
\[
q_n=P\modulo\Phi_n=P_0+P_1+\dots+P_N\modulo\Phi_n.
\]
Conversely if $q_n=P_0+P_1+\dots+P_N\modulo\Phi_n$
for all $n\ge N$ where $P_i\in D_i$ reduces to $q_i$
modulo $\Phi_i$, then obviously $(q_0,q_1,q_2,\dots)=c(P)$
with $P=P_0+P_1+\dots+P_N$.

\smallskip

\noindent {\rm (2)} This follows from the description
in~(1).
\end{proof}

We can now prove that the objects of the functor $F$ descend
along schematically dominant morphisms. For the latter notion,
we refer the reader to \cite{EGA} IV$_3$.11.10. The next
two lemmas are two variants of this descent statement.

\begin{lemma} \label{lemma:descent-by-sch-dominant-1}
Let $T'\to T$ be a morphism of $S$-schemes such that
$T=\Spec(A)$ is affine and $T'=\amalg_i \Spec(A_i)$
is a disjoint sum of affines, with $A\to \prod_i A_i$ injective.
Then the map $F(T)\to F(T')$ is bijective.
\end{lemma}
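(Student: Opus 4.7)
The plan is to rely on the cartesian square of Lemma \ref{lemma:map-c}(2) applied to the ring map $A\to B\defeq\prod_i A_i$, which is injective by hypothesis. Injectivity of $F(T)\to F(T')$ is automatic: since $F\to S$ is already a monomorphism (by the Density Theorem mentioned at the start of \S\ref{section:3}), the same holds after arbitrary base change. So the whole content is surjectivity.

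Next, I would translate both sides into Hopf-algebra data. An element of $F(T)$ is an $A$-algebra map $\varphi\colon\cO(H_T)\to A[z^{\pm 1}]=A[\GG_m]$ (respecting the Hopf structure) whose composition with the reduction $A[z^{\pm 1}]\to A[\mu_{\ell^n}]=A[z]/(\Phi_0\cdots\Phi_n)$ coincides, for every $n$, with $u_n^*$. Similarly, an element of $F(T')$ is an $A$-algebra map $\varphi'\colon\cO(H_T)\to B[z^{\pm 1}]$ whose restriction to each $\mu_{\ell^n,T'}$ equals $u_{n,T'}^*$, i.e.\ factors as $u_n^*$ followed by the base-change $A[z]/(\Phi_n)\to B[z]/(\Phi_n)$. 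The claim to prove is that any such $\varphi'$ actually takes values in $A[z^{\pm 1}]\subset B[z^{\pm 1}]$.

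Here is where Lemma \ref{lemma:map-c}(2) does all the work. It says exactly that, inside $B[\mu_{\ell^\infty}]=\prod_{n\ge 0}B[z]/(\Phi_n)$, we have
\[
A[\GG_m]=B[\GG_m]\cap A[\mu_{\ell^\infty}].
\]
So to see that $\varphi'(h)$ lies in $A[\GG_m]$ for every $h\in\cO(H_T)$, it suffices to check the image in $B[\mu_{\ell^\infty}]$ belongs to the subring $A[\mu_{\ell^\infty}]$. But the $n$-th component of that image is precisely the composition $\cO(H_T)\xrightarrow{\varphi'}B[z^{\pm 1}]\to B[z]/(\Phi_n)$, which by the prescribed extension condition equals $u_n^*\colon\cO(H_T)\to A[z]/(\Phi_n)$ followed by $A[z]/(\Phi_n)\into B[z]/(\Phi_n)$. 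Thus each component lies in $A[z]/(\Phi_n)$, so $\varphi'(h)\in A[\mu_{\ell^\infty}]$, and hence $\varphi'(h)\in A[\GG_m]$.

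I do not foresee any real obstacle beyond setting up the diagram cleanly: the whole argument is a direct application of the cartesian square and of the hypothesis that the restrictions of an element of $F(T')$ to $\mu_{\ell^n,T'}$ are the base changes of the $u_n$ defined over $A$. The only mild subtlety is to remember that the resulting descended map $\varphi\colon\cO(H_T)\to A[z^{\pm 1}]$ is automatically a Hopf-algebra map (equivalently, a group homomorphism at the level of $T$-points), because it becomes so after the faithfully injective base change $A\to B$; this is an elementary check using that $A\otimes_A A\to B\otimes_A B$ is again injective (it equals $B\otimes_A B$, and $A\to B$ being injective with $B$ flat is not needed — we only compare two $A$-algebra maps $\cO(H_T)\otimes\cO(H_T)\to A[z^{\pm 1}]\otimes A[z^{\pm 1}]$ that agree after the injection into the corresponding $B$-tensor, so they agree).
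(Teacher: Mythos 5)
Your surjectivity argument rests on a misidentification of $F(T')$. Since $T'=\amalg_i\Spec(A_i)$, an element of $F(T')$ is a \emph{family} $(f_i)$ of group homomorphisms $\GG_{m,A_i}\to H_{A_i}$, i.e.\ of Hopf algebra maps $\cO_H\otimes A_i\to A_i[z^{\pm 1}]$, one for each $i$; it is \emph{not} a single map $\cO_H\otimes B\to B[z^{\pm 1}]$ with $B=\prod_i A_i$. The two notions differ precisely because, for an infinite index set, $B[z^{\pm 1}]\subsetneq\prod_i A_i[z^{\pm 1}]$: the Laurent polynomials $f_i(h)$ may have degrees unbounded in $i$, so the family $(f_i(h))_i$ need not lie in $B[z^{\pm 1}]$, and Lemma~\ref{lemma:map-c}(2), which takes place inside $B[\mu_{\ell^\infty}]$ and concerns the subring $B[\GG_m]$, cannot be applied to it. If you knew that your element of $F(T')$ came from $F(\Spec B)$ --- equivalently, that the degrees of the $f_i(h)$ are bounded uniformly in $i$ --- then the cartesian square would indeed finish the proof at once; but that uniform boundedness is exactly the content of the lemma, and is why the text remarks that the case of a quasi-compact $T'\to T$ (finitely many $A_i$, where $\prod_i A_i[z^{\pm 1}]=B[z^{\pm 1}]$) is easier while the general case is the one that matters.

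The actual proof has to work harder at exactly this point. From the hypothesis that all restrictions to $\mu_{\ell^n}$ are the $u_n$ defined over $A$ (indeed over $R$), one only gets, via the cyclotomic expansions and Lemma~\ref{lemma:map-c}, a map $\Phi:\cO_H\otimes A\to A^{\ZZ}$, i.e.\ a formal family of coefficients with possibly infinite support; the finiteness of the support --- hence an honest map to $A[z^{\pm 1}]$ --- is then extracted from the compatibility of $\Phi$ with the comultiplication together with \cite{SGA3.2}, Exp.~IX, Lemme~7.2, which requires a preliminary reduction to the case where $A$ and the $A_i$ are noetherian (replacing them by the images of $R$). This ``formal homomorphisms algebrize'' step is the heart of the argument and is missing from your proposal. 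Your observations on injectivity and on checking the Hopf-algebra conditions after the injection $A\to\prod_i A_i$ are fine, but they are the easy part.
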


The result is easier when $T'\to T$ is quasi-compact,
but the general case will be crucial for us.

\begin{proof}
Since $F(T)$ has at most one point, the map $F(T)\to F(T')$ is
injective and it is enough to prove that it is surjective.
We start with an element of $F(T')$, i.e. a family of morphisms
of $A_i$-group schemes $f_i:\GG_{m,A_i}\to H_{A_i}$ each of which
extends the morphisms $u_n:\mu_{n,A_i}\to H_{A_i}$, $n\ge 0$.
For simplicity, in the sequel we write again
$f_i:\cO_H\otimes A_i\to A_i[z^{\pm 1}]$ and
$u_n:\cO_H\to R[z]/(z^{\ell^n}-1)$ the corresponding comorphisms
of Hopf algebras; this should not cause confusion. For each
$R$-algebra $A$ we also write
$u_{\infty,A}:\cO_H\otimes A\to A[\mu_{\ell^{\infty}}]$ for
the product of the $u_{n,A}$. These fit in a commutative diagram:
\[
\begin{tikzcd}[column sep=30]
\cO_H\otimes A_i \ar[d,"f_i"{swap}] \ar[rd,"u_{\infty,A_i}"]
& & \\
A_i[z^{\pm 1}] \ar[r,"c_{A_i}"{swap}] &
A_i[\mu_{\ell^\infty}].
\end{tikzcd}
\]
We now reduce to the case where $A$ and the $A_i$ are
noetherian. For this let $L$ resp. $L_i$ be the image of
$R\to A$, resp. of $R\to A_i$. Being quotients of $R$,
the rings $L$ and $L_i$ are noetherian. Moreover, since
$A\to \prod A_i$ is injective then so is $L\to \prod L_i$.
Since the $u_n$ are defined over $R$ hence over $L$,
we have a commutative diagram:
\[
\begin{tikzcd}[column sep=30,row sep=30]
\cO_H\otimes L_i \ar[d] \ar[rrd,"u_{\infty,L_i}"]
\ar[rd,"{f_{i,L_i}}"{swap} label distance=1mm,dotted] & \\
\cO_H\otimes L_i \ar[rd,"f_i"{swap}] &
L_i[z^{\pm 1}] \ar[r,"c_{L_i}"{swap}] \ar[d] &
L_i[\mu_{\ell^\infty}] \ar[d] \\
& A_i[z^{\pm 1}] \ar[r,"c_{A_i}"{swap}] &
A_i[\mu_{\ell^\infty}].
\end{tikzcd}
\]
Since the lower right square is cartesian, there is an induced
dotted arrow. In this way we see that $f_i$ is actually defined
over $L_i$. So replacing $A$ (resp. $A_i$) by $L$ (resp.
$L_i$), we obtain the desired reduction.

Let $\hat A\defeq \prod_i A_i$. Taking products over $i$,
we build a commutative diagram:
\[
\begin{tikzcd}[column sep=40,row sep=10]
\cO_H\otimes A \ar[d] \ar[rr,"u_{\infty,A}"]
\ar[rdd,dotted] &
& A[\mu_{\ell^\infty}] \ar[dd,hook] \\
\cO_H\otimes \hat A \ar[d] & & \\
\prod_i(\cO_H\otimes A_i) \ar[r,"\prod f_i"]
& \prod_i(A_i[z^{\pm 1}]) \ar[r,hook,"{\prod c_{A_i}}"]
& \hat A[\mu_{\ell^\infty}]
\end{tikzcd}
\]
Henceforth we set $C\defeq \cO_H\otimes A$ and we write
$\Phi_0$ the dotted composition in the diagram above.
What the diagram shows is that
\[
\begin{tikzcd}[column sep=45]
C \ar[r,"\Phi_0"] &
\prod_i(A_i[z^{\pm 1}]) \ar[r,hook,"{\prod c_{A_i}}"]
& \hat A[\mu_{\ell^\infty}]
\end{tikzcd}
\]
factors through $A[\mu_{\ell^\infty}]$. According to
Lemma~\ref{lemma:map-c}(2) applied with
$B=\hat A$, this implies that
\[
\begin{tikzcd}[column sep=45]
C \ar[r,"\Phi_0"] &
\prod_i(A_i[z^{\pm 1}]) \ar[r,hook,"{\prod \can_{A_i}}"]
& {\hat A}{}^{\,\ZZ}
\end{tikzcd}
\]
factors through $A^{\ZZ}$, providing a map
$\Phi:C \to A^{\ZZ}$.
From the diagrams expressing the fact that the~$f_i$
respect the comultiplications, taking products over $i$,
we obtain a commutative diagram:
\[
\begin{tikzcd}
C \ar[rr,"\Phi"] \ar[d] & & A^{\ZZ} \ar[d] \\
C\otimes_A C \ar[r,"\Phi\otimes\Phi"] &
A^{\ZZ}\otimes_A A^{\ZZ} \ar[r] & A^{\ZZ\times\ZZ}.
\end{tikzcd}
\]
Let $g\in C$ and write $\Phi(g)=(a_m)_{m\in\ZZ}$. Since
$A$ is noetherian, Lemme~7.2 of \cite{SGA3.2}, Exp.~IX
is applicable and shows that only finitely many of the
$a_m$ are nonzero, that is $\Phi(g)\in A[z^{\pm 1}]$.
Therefore $\Phi$ gives rise to a map $f:C\to A[z^{\pm 1}]$.
The fact that~$f$ respects the comultiplication of the
Hopf algebras follows immediately by embedding
$A[z^{\pm 1}]\otimes A[z^{\pm 1}]$ into
${\hat A}[z^{\pm 1}]\otimes {\hat A}[z^{\pm 1}]$ where the
required commutativity holds by assumption.
The fact that $f$ respects the counits is equally clear.
\end{proof}

\begin{lemma} \label{lemma:descent-by-sch-dominant-2}
Let $T'\to T$ be a morphism of $S$-schemes which is
schematically dominant. Then the map $F(T)\to F(T')$ is bijective.
\end{lemma}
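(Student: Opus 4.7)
The plan is to bootstrap from Lemma~\ref{lemma:descent-by-sch-dominant-1} using the fact that $F$ is a Zariski sheaf together with the monomorphism property of $F\to S$. I would begin by noting that $F$ is indeed a Zariski sheaf: as a subfunctor of $\Hom(\GG_m,H)$ (which is a sheaf for any reasonable topology), it is cut out by the conditions $f_{|\mu_{\ell^n,T}}=u_{n,T}$, each of which is local on $T$. Since the statement is local on $T$ and schematic dominance is preserved by base change to open subschemes of the target (it is the statement that $\cO_T \to f_*\cO_{T'}$ is injective, a property stable under restriction), we may assume $T=\Spec(A)$ is affine.

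With $T$ affine, the next step is to cover $T'$ by affine opens $U_i=\Spec(A_i)$. Schematic dominance of $T'\to T$ amounts to injectivity of $A\to\Gamma(T',\cO_{T'})$, and composing with the injection $\Gamma(T',\cO_{T'})\hookrightarrow\prod_i A_i$ coming from the sheaf axiom on the cover $\{U_i\}$, one obtains an injection $A\hookrightarrow\prod_i A_i$. Therefore Lemma~\ref{lemma:descent-by-sch-dominant-1} applies to the morphism $\coprod_i U_i\to T$ and yields a bijection $F(T)\isomto F(\coprod_i U_i)=\prod_i F(U_i)$.

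To conclude, I would exploit that $F(T')$ contains at most one element (since $F\to S$ is a monomorphism), so the map $F(T)\to F(T')$ is automatically injective and only surjectivity is at stake. Given $f'\in F(T')$, the family $(f'_{|U_i})_i$ lies in $\prod_i F(U_i)$, and by the bijection just established it lifts to some $f\in F(T)$; the restriction $f_{|T'}$ must then coincide with $f'$ by uniqueness in $F(T')$. The genuine content lies entirely in Lemma~\ref{lemma:descent-by-sch-dominant-1}, where the main obstacle (the cyclotomic-expansion argument combined with the finiteness input from \cite{SGA3.2}~Exp.~IX, Lemme~7.2) was already overcome; what remains here is a formal sheaf-theoretic gluing combined with the monomorphism property of $F\to S$.
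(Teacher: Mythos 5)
Your proposal is correct and takes essentially the same route as the paper: both reduce, via the Zariski/fpqc sheaf property of $F$ and the fact that $F\to S$ is a monomorphism (so all $F(\cdot)$ have at most one element), to an application of Lemma~\ref{lemma:descent-by-sch-dominant-1} for a possibly infinite disjoint union of affine opens covering $T'$, using that schematic dominance yields injectivity of $A\to\prod_i A_i$. The only cosmetic difference is that the paper covers $T$ by affines $T_i$ and then covers each $T_i\times_T T'$ by affines, arguing with a commutative square of bijections, whereas you first localize on $T$ and then cover $T'$ directly; the substance is identical.
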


\begin{proof}
Since $F(T)$ has at most one point, the map $F(T)\to F(T')$ is
injective and it is enough to prove that it is surjective.
By fpqc descent of morphisms, this holds when $T'\to T$
is a covering for the fpqc topology. Applying this remark
with chosen Zariski covers $\amalg T_i\to T$ and
$\amalg_{i,j} T'_{ij}\to \amalg_i T_i\times_T T'\to T'$,
we see that the vertical
maps in the following commutative square are bijective:
\[
\begin{tikzcd}
F(T) \ar[r]
\ar[d,"\sim"{above,label distance=1pt,sloped,pos=.4}] & F(T') \ar[d,"\sim"{above,label distance=1pt,sloped,pos=.4}] \\
\prod F(T_i) \ar[r] & \prod F(T'_{ij}).
\end{tikzcd}
\]
Therefore it is enough to prove that
$F(T_i)\to\prod_j F(T'_{ij})$ is bijective, for each $i$.
Choosing $T_i=\Spec(A_i)$ and $T'_{ij}=\Spec(A'_{ij})$ afffine,
the assumption that $T'\to T$ is schematically dominant implies
that $A_i\to \prod A'_{ij}$ is injective. In this way we are
reduced to the statement of
Lemma~\ref{lemma:descent-by-sch-dominant-1}.
\end{proof}

\subsection{Representability using Grothendieck's theorem
on unramified functors}
\label{sec:repres-using-groth}

Recall the statement of Grothendieck's theorem on representation
of unramified functors; all affine schemes $\Spec(A)$ appearing
are assumed to be $S$-schemes, and we write $F(A)$ instead
of $F(\Spec(A))$.

\begin{theorem-quote}{Grothendieck~\cite{Mu65}}
\label{Grothendieck-s-theorem}
Let $S$ be a locally noetherian scheme and $F$ a set-valued
contravariant functor on the category of $S$-schemes. Then $F$
is representable by an $S$-scheme which is locally of finite
type, unramified and separated if and only if Conditions
$(F_1)$ to $(F_8)$ below hold.
\begin{itemize}
\item[$(F_1)$] The functor $F$ is a sheaf for the fpqc topology.
\item[$(F_2)$] The functor $F$ is locally of finite presentation;
that is,
for all filtering colimits of rings $A=\colim A_\alpha$,
the map $\colim F(A_\alpha)\to F(A)$ is bijective.
\item[$(F_3)$] The functor $F$ is effective; that is, for all noetherian
complete local rings $(A,m)$, the map $F(A)\to \lim F(A/m^k)$ is
bijective.
\item[$(F_4)$] The functor $F$ is homogeneous; more precisely, for all exact
sequences of rings $A\to A'\toto A'\otimes_AA'$
with $A$ local artinian, $\length_A(A'/A)=1$ and trivial residue
field extension $k_A=k_{A'}$, the diagram
$F(A)\to F(A')\toto F(A'\otimes_AA')$ is exact.
\item[$(F_5)$] The functor $F$ is formally unramified.
\item[$(F_6)$] The functor $F$ is separated; that is, it satisfies the valuative
criterion of separation.
\end{itemize}
For the last two conditions we let $A$ be a noetherian ring,
$N$ its nilradical, $I$ a nilpotent ideal such that $IN=0$,
$T=\Spec(A)$, $T'=\Spec(A/I)$. We assume that $T$
is irreducible and we call $t$ its generic point.
\begin{itemize}
\item[$(F_7)$] Assume moreover that $A$ is complete one-dimensional local
with a unique associated prime. Then any point $\xi':\Spec(A/I)\to F$ such
that
\[
\xi'_t:\Spec((A/I)_t)\too \Spec(A/I)\too F
\]
can be lifted to a point $\xi^*:\Spec(A_t)\to F$, can be lifted
to a point $\xi:\Spec(A)\to F$.
\item[$(F_8)$] Assume that $\xi':\Spec(A/I)\to F$ is such that
\[
\xi'_t:\Spec((A/I)_t)\too \Spec(A/I)\too F
\]
can not be lifted to any subscheme of $\Spec(A_t)$ which is strictly
larger than $\Spec((A/I)_t)$. Then there exists a nonempty open set
$W\subset T$ such that for all open subschemes $W_1\subset T$ contained
in~$W$, the restriction $\xi'_{|W'_1}:W'_1\to F$ (with $W'_1=W_1\times_TT'$)
can not be lifted to any subscheme of~$W_1$ which is strictly larger than
$W'_1$.
\end{itemize}
\end{theorem-quote}

\noindent In what follows we apply Grothendieck's theorem
to prove~\ref{simplified-situation}, whose notation we use.
Recall in particular that
\[
F(T)=\{\mbox{morphisms } f:\GG_{m,T}\to H_T
\mbox{ that extend the $u_{n,T}$, $n\ge 0$}\}.
\]
We verify the conditions one by one for this functor.

\begin{notitle}{Conditions $(F_1)$, $(F_4)$, $(F_5)$,
$(F_6)$, $(F_7)$} We begin by checking the easiest conditions.

\medskip

\noindent $(F_1)$ This follows from fpqc descent, see e.g.
\cite{SGA1}, Exp.~VIII, Th.~5.2.

\smallskip

\noindent $(F_4)$ Since $A\to A'$ is injective,
by Lemma~\ref{lemma:descent-by-sch-dominant-1}
the map $F(A)\to F(A')$ is a bijection. This gives a statement
which is much stronger than the $(F_4)$ in the theorem.

\smallskip

\noindent $(F_5)$ Since $F\to S$ is a monomorphism, it is
formally unramified.

\smallskip

\noindent $(F_6)$ Since $F\to S$ is a monomorphism, it is separated.

\smallskip

\noindent $(F_7)$ Since $A$ has a unique associated prime, the map
$\Spec(A_t)\to\Spec(A)$ is schematically dominant.
Hence by Lemma~\ref{lemma:descent-by-sch-dominant-1}, the point
$\xi^*:\Spec(A_t)\to F$ automatically extends to $\Spec(A)$.
\end{notitle}

\begin{notitle}{Condition $(F_2)$} \label{condition_F2}
Let $\sF:=\Hom(\GG_m,H)$ be the functor of {\em all} morphisms
of group schemes $\GG_m\to H$, that is, not just those that
extend the collection $u_n$. It is standard that $\sF$ is
locally of finite presentation, see \cite{EGA} IV$_3$.8.8.3.
Should the affine scheme $\lim_n \Hom(\mu_{\ell^n},H)$ be
locally of finite type over $S$, it would follow that $F\to S$
is locally of finite presentation (\cite{EGA} IV$_1$.1.4.3(v)).
However this is not the case in general, and the verification
of $(F_2)$ needs more work.

So let $A=\colim A_\alpha$ be a filtering colimits of rings.
We want to prove that $\colim F(A_\alpha)\to F(A)$ is bijective.
We look at the diagram
\[
\begin{tikzcd}
\colim F(A_\alpha) \ar[r] \ar[d,hook] & F(A) \ar[d,hook] \\
\colim \sF(A_\alpha) \ar[r,"\sim"] & \sF(A).
\end{tikzcd}
\]
Since $\sF$ is locally of finite presentation, the bottom
row is an isomorphism. We deduce that the upper row is
injective. We shall now prove that the upper row is
surjective, and in fact that the diagram is cartesian.
\end{notitle}

\begin{lemma}
Let $f:\GG_{m,A}\to H_A$ be a morphism extending
$u_{n,A}:\mu_{\ell^n,A}\to H_A$
for all $n\ge 0$. Then there exists an index $\alpha$ such
that $f$ descends to a map $f_\alpha:\GG_{m,A_\alpha}\to H_{A_\alpha}$
extending $u_{n,A_\alpha}$ for all $n\ge 0$.
\end{lemma}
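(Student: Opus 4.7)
The plan is to first use that $\sF := \Hom(\GG_m, H)$ is locally of finite presentation to descend $f$ to some $f_{\alpha_0} \in \sF(A_{\alpha_0})$, and then to upgrade the descent to $F$ by combining Lemma~\ref{lemma:descent-by-sch-dominant-2} with a noetherian approximation.

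After descending $f$ to $f_{\alpha_0}$, I would fix finite generators $b_1,\dots,b_k$ of $\cO_H$ so that $f_{\alpha_0}$ is determined by the Laurent polynomials $f_{\alpha_0}(b_i\otimes 1) = \sum_j a_{i,j} z^j$, involving only finitely many coefficients $a_{i,j} \in A_{\alpha_0}$. The decisive move is to replace $A_{\alpha_0}$ by the sub-$R$-algebra $R' \subset A_{\alpha_0}$ generated by these coefficients: since $R$ is noetherian (we are in the setting of Theorem~\ref{simplified-situation} where $S$ is of finite type over $\Spec\ZZ[1/\ell]$), the finitely generated $R$-algebra $R'$ is itself noetherian, and all ring and Hopf identities descend from $A_{\alpha_0}$ to $R'$ to give $f' \in \sF(R')$.

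Next, let $\tilde R' \subset A$ be the image of $R' \hookrightarrow A_{\alpha_0} \to A$. The inclusion $\tilde R' \hookrightarrow A$ yields a schematically dominant morphism $\Spec A \to \Spec \tilde R'$, so Lemma~\ref{lemma:descent-by-sch-dominant-2} produces a unique $\hat f \in F(\tilde R')$ restricting to $f$. The descent $\tilde f' \in \sF(\tilde R')$ of $f'$ also restricts to $f$ in $\sF(A)$. By the Density Theorem, $\sF \to L$ is a monomorphism, while $L$ being $S$-affine makes $L(\tilde R') \to L(A)$ injective (morphisms to a separated scheme are determined on a schematically dense subscheme). Combining these, $\sF(\tilde R') \to \sF(A)$ is injective, which forces $\tilde f' = \hat f$ and hence $\tilde f' \in F(\tilde R')$.

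The main remaining step---and the one that crucially uses the noetherianness of $R'$---is to lift $\tilde f'$ back to $F(A_\beta)$ for some $\beta$. The kernel $K := \ker(R' \to \tilde R') = R' \cap \ker(A_{\alpha_0} \to A)$ is finitely generated in the noetherian ring $R'$, say by $x_1, \dots, x_r$. Each $x_i$ vanishes in some $A_{\beta_i}$ since $A = \colim A_\beta$ is filtered, and for $\beta$ above all $\beta_i$, the composition $R' \to A_{\alpha_0} \to A_\beta$ factors through a ring map $\tilde R' \to A_\beta$. A direct comparison of values on the generators $b_i$ identifies $f_\beta := f_{\alpha_0} \otimes_{A_{\alpha_0}} A_\beta$ with the base change of $\tilde f' \in F(\tilde R')$ along this factoring, and since $F$ is a subfunctor of $\sF$ stable under base change, one concludes $f_\beta \in F(A_\beta)$ as desired. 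The subtlety that this noetherian reduction bypasses is that ``$f$ extends $u_n$ for all $n$'' consists a priori of infinitely many equations, and $\ker(A_{\alpha_0} \to A)$ need not itself be finitely generated, so without the passage through $R'$ there would be no uniform $A_\beta$ in which to kill all the obstructions simultaneously.
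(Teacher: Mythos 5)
Your proof is correct, but it follows a genuinely different route from the paper's. The paper stays inside the explicit Laurent-polynomial bookkeeping: fixing a presentation of $\cO_H$, it encodes ``$f$ extends all $u_{n}$'' as the equations $z_{n,j}=\pi_n(y_j)$, chooses $n_0$ so that the $y_j$ lie in a bounded-degree module $E_0$, and introduces the truncation operators $\chi_n$ so that the infinitely many conditions become the vanishing of the deviations $\delta_{n,j}=z_{n,j}-\chi_n(z_{n_0,j})$, which are defined over the \emph{base ring} $R$; noetherianity of $R$ then makes the ideal of their coefficients finitely generated, and killing finitely many elements in some $A_\alpha$ (plus the finitely many conditions for $n\le n_0$) finishes the argument. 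You instead avoid all manipulation of the conditions themselves: you pass to the finitely generated (hence noetherian) sub-$R$-algebra $R'\subset A_{\alpha_0}$ carrying the coefficients of $f_{\alpha_0}$, use Lemma~\ref{lemma:descent-by-sch-dominant-1} for the injection $\tilde R'=R'/K\hookrightarrow A$ together with the injectivity of $\sF(\tilde R')\to\sF(A)$ to see that the image of $f'$ over $\tilde R'$ already lies in $F$, and then kill the finitely generated kernel $K$ in some $A_\beta$ so that $f_\beta$ is a base change of an element of $F(\tilde R')$. This is a legitimate reuse of the descent lemma, which the paper proves beforehand but invokes only for $(F_4)$, $(F_7)$, $(F_8)$, so there is no circularity; what your softer argument buys is the absence of the $\chi_n$/cyclotomic machinery at this step, at the cost of the extra layer of ring-juggling ($R'$, $K$, $\tilde R'$), while the paper's computation pinpoints concretely which finitely many elements of $R$ must die. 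Two small remarks: the injectivity of $L(\tilde R')\to L(A)$ is cleanest seen directly from affineness (an $R$-algebra map $\cO_L\to\tilde R'$ is determined by its composite with $\tilde R'\hookrightarrow A$), rather than via the slogan about separated targets and schematically dense \emph{subschemes}, since $\Spec A\to\Spec\tilde R'$ is not an immersion --- alternatively $\sF(\tilde R')\to\sF(A)$ is injective simply because $\tilde R'[z^{\pm1}]\hookrightarrow A[z^{\pm1}]$; and note that both proofs use in the same essential way that $R$ is noetherian, as guaranteed by the reduction in Theorem~\ref{simplified-situation}.
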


\begin{proof}
Since $G$ and $H$ are finitely presented, the morphism $f$
is defined at finite level, that is there exists an index $\alpha$
and a morphism of $A_\alpha$-group schemes
$g:\GG_{m,A_\alpha}\to H_{A_\alpha}$
whose pullback along $\Spec(A)\to\Spec(A_\alpha)$ is $f$.
Since the groups are affine, the morphism $g$ is given by a map of
rings $g^\sharp:\cO_H\otimes A_\alpha\to A_\alpha[z^{\pm 1}]$. Fix
a presentation $\cO_H=R[x_1,\dots,x_s]/(P_1,\dots,P_t)$. Then~:
\begin{itemize}
\item $u_n^\sharp$ is determined by the
elements $z_{n,j}\defeq u_n^\sharp(x_j)\in R[z]/(z^{\ell^n}-1)$
satisfying $P_k(z_{n,1},\dots,z_{n,s})=0$ for $k=1,\dots,t$,
\item $g^\sharp$ is determined by the elements
$y_j=g^\sharp(x_j)\in A_\alpha[z^{\pm 1}]$ satisfying
$P_k(y_1,\dots,y_s)=0$ for $k=1,\dots,t$.
\end{itemize}
Moreover, to say that $f$ extends $u_{n,A}$ means that
we have the equality $z_{n,j}=\pi_n(y_j)$ in
$A[z]/(z^{\ell^n}-1)$, for all $j$, where
\[
\pi_n:A_\alpha[z^{\pm 1}]\to A[z]/(z^{\ell^n}-1)
\]
is the projection. So we have to prove that we may enlarge
the index $\alpha$ in such a way that $g$ extends
$u_{n,A_\alpha}$ for all $n\ge 0$.

For $n_0\ge 0$ an integer, consider the finite free $R$-module
$E_0=R(-\lfloor\ell^{n_0}/2\rfloor;\ell^{n_0}-\lfloor\ell^{n_0}/2\rfloor-1)$ in the notation of
\ref{cyclotomic-expansion}. Then $\pi_{n_0|E_0}$
is an isomorphism and for all $n\ge n_0$ we can define
\[
\chi_n=\pi_n\circ (\pi_{n_0|E_0})^{-1}:
R[z]/(z^{\ell^{n_0}}-1) \too R[z]/(z^{\ell^n}-1).
\]
By base change, these objects are defined over any $R$-algebra.
We choose $n_0$ large enough so that $E_0\otimes_R A_\alpha$
contains the Laurent polynomials $y_1,\dots,y_s$.

In the present context, the condition that $f$ extends all the
maps $u_{n,A}:G_{n,A}\to H_A$ is a finiteness constraint imposed
by $f$ on $\{u_n\}$ (whereas in other places of our arguments
it is best seen as a condition imposed by $\{u_n\}$ on $f$).
Indeed, from the relations $z_{n,j}=\pi_n(y_j)$ in $A$, we deduce
that
\[
z_{n,j}=\chi_n(z_{n_0,j}) \quad\mbox{in }
A[z]/(z^{\ell^n}-1) \quad\mbox{for all } n\ge n_0,
\]
namely
$\chi_n(z_{n_0,j})=(\pi_n\circ (\pi_{n_0|E_0})^{-1})(\pi_{n_0}(y_j))
=\pi_n(y_j)=z_{n,j}$. We claim that we may increase $\alpha$
to achieve that these equalities hold in $A_\alpha[z]/(z^{\ell^n}-1)$,
for all $n\ge n_0$ and all $j$. In order to see this, note that
the elements $\delta_{n,j}:=z_{n,j}-\chi_n(z_{n_0,j})$ are defined
over $R$, and as we have just proved, they belong to the kernel of
the morphism $R[z]/(z^{\ell^n}-1)\to A[z]/(z^{\ell^n}-1)$. Let
$I\subset R$ be the ideal generated by the coefficients of the
expressions of $\delta_{n,j}$ on the monomial basis, for varying
$n\ge n_0$ and $j$. Since $R$ is noetherian, $I$ is generated by
finitely many elements. These elements vanish in $A$, hence they
vanish in~$A_\alpha$ provided we increase $\alpha$ a little, whence
our claim.

\smallskip

The relations $z_{n,j}=\pi_n(y_j)$ in $A[z]/(z^{\ell^n}-1)$
with $j=1,\dots,s$ and $n\le n_0$ being finite in number, we may
increase $\alpha$ so as to ensure that all of them hold in
$A_{\alpha}[z]/(z^{\ell^{n}}-1)$.
Then for $n\ge n_0$ we have
\[
z_{n,j}=\chi_n(z_{n_0,j})=\chi_n(\pi_{n_0}(y_j))=\pi_n(y_j)
\quad\mbox{in } A_{\alpha}[z]/(z^{\ell^{n_0}}-1)
\]
again. That is, $g$ extends
the maps $u_{n,A_{\alpha}}$ for all $n\ge 0$.
\end{proof}

\begin{notitle}{Condition $(F_3)$}
Let $(A,m)$ be a noetherian complete local ring. We want to prove
that the map $F(A)\to \lim F(A/m^k)$ is bijective. We write again
$\sF:=\Hom(\GG_m,H)$. We look at the diagram
\[
\begin{tikzcd}
F(A) \ar[r] \ar[d,hook] & \lim F(A/m^k) \ar[d,hook] \\
\sF(A) \ar[r,"\sim"] & \lim \sF(A/m^k).
\end{tikzcd}
\]
From \cite{SGA3.2}, Exp.~IX, Th.~7.1 we know that $\sF$ is effective,
that is the bottom arrow is bijective. We deduce that
the upper row is injective. We shall now prove that the upper row is
surjective, and in fact that the diagram is cartesian. So let
$f_k:\GG_{m,A/m^k}\to H_{A/m^k}$ be a collection of $A/m^k$-morphisms
such that $f_k$ extends $u_{n,A/m^k}:\mu_{\ell^n,A/m^k}\to H_{A/m^k}$
for all $n\ge 0$, and let $f:\GG_{m,A}\to H_A$ be a morphism that
algebraizes the $f_k$. We must prove that $f$ extends $u_{n,A}$,
for each $n$. For this let $i_n:\mu_{\ell^n,A}\to\GG_{m,A}$ be
the closed immersion. The two maps $f\circ i_n$ and $u_n$
coincide modulo $m^k$ for each $k\ge 1$, hence so do the
morphisms of Hopf algebras
\[
(f\circ i_n)^\sharp,u_n^\sharp:
\cO_H\otimes A\to A[z]/(z^{\ell^n}-1).
\]
Since $A[z]/(z^{\ell^n}-1)$ is separated for the $m$-adic
topology, we deduce that $(f\circ i_n)^\sharp=u_n^\sharp$
and hence $f\circ i_n=u_n$. This concludes the argument.
\end{notitle}

\begin{remark}
We could also appeal to the following more general result
extending the injectivity part of \cite{EGA}~III$_1$.5.4.1:
{\em let $(A,m)$ be a
noetherian complete local ring, and $S=\Spec(A)$. Let $X,Y$
be $S$-schemes of finite type with $X$ pure and $Y$ separated.
Let $f,g:X\to Y$ be $S$-morphisms. If we have the equality of
completions $\hat f=\hat g$, then $f=g$.} For the notion of
a pure morphism of schemes we refer to
Appendix~\ref{appendix:weil_restriction}. For the proof
of the italicized statement,
by \cite{EGA1new}, 10.9.4 the morphisms~$f$ and $g$ agree in
an open neighbourhood of $\Spec(A/m)$. Then the arguments in
the proof of \cite{Ro12}, Lemma~2.1.9 apply verbatim.
\end{remark}

\begin{notitle}{Condition $(F_8)$}
This condition will be verified with the help
of the following lemma.
\end{notitle}

\begin{lemma} \label{lemma-maximal-closed}
Let $T$ be a scheme and $T'$ a closed subscheme. Let
$\xi':T'\to F$ be a point. Then there is a largest closed
subscheme $\cZ_T\subset T$ such that $\xi'$ extends to $\cZ_T$.
Moreover, its formation is Zariski local: if $U\subset T$
is an open subscheme and $U'=U\cap T'$, we have $\cZ_T\cap U=\cZ_U$.
\end{lemma}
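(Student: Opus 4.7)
The plan is to construct $\cZ_T$ by intersecting the ideals of all candidate closed subschemes, and to verify Zariski locality by a direct local construction combined with Lemma~\ref{lemma:descent-by-sch-dominant-1}. In view of the Zariski locality it suffices to establish both assertions when $T = \Spec(A)$ is affine, so assume this for the remainder of the sketch.

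Write $I = \mathcal{I}(T') \subset A$, and let $J_0$ denote the intersection of all ideals $J \subset I$ with the property that $\xi'$ extends (necessarily uniquely, since $F \to S$ is a monomorphism) to a morphism $V(J) \to F$. The ideal $I$ itself is among these, so the family is non-empty; set $\cZ_T := V(J_0)$. The canonical ring map $A/J_0 \hookrightarrow \prod_J A/J$ is injective, and the individual extensions $V(J) \to F$ are automatically pairwise compatible because $F \to S$ is a monomorphism (so $F$ takes values in sets with at most one element). Hence Lemma~\ref{lemma:descent-by-sch-dominant-1} descends the collection to a single extension $\cZ_T \to F$, and $\cZ_T$ is maximal by construction.

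For Zariski locality, let $U \subset T$ be open and $U' = T' \cap U$. The inclusion $\cZ_T \cap U \subset \cZ_U$ is immediate: $\cZ_T \cap U$ is closed in $U$, contains $U'$, and inherits an extension by restriction from $\cZ_T$, so by maximality of $\cZ_U$ it is contained in $\cZ_U$. The opposite inclusion is local on $U$, so we may further assume $U = D(g) = \Spec(A_g)$ is a basic affine open of $T$. Fix any ideal $J' \subset IA_g$ in the family defining $\cZ_U$, and set
\[
\tilde J' := \{\,a \in I : a/1 \in J'\,\} \subset A.
\]
A direct check shows that $\tilde J' \cdot A_g = J'$ and that the ring map $A/\tilde J' \to (A/\tilde J')_g \times A/I$ is injective. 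The two extensions on $V(J')$ and $T'$ agree on their overlap $V(J') \cap T' = U'$ by the monomorphicity of $F\to S$, so Lemma~\ref{lemma:descent-by-sch-dominant-1} provides an extension $V(\tilde J') \to F$. Hence $\tilde J'$ belongs to the family defining $\cZ_T$, so $J_0 \subset \tilde J'$ and consequently $J_0 A_g \subset J'$. Intersecting over all admissible $J'$ gives $J_0 A_g \subset \mathcal{I}(\cZ_U)$, i.e., $\cZ_U \subset V(J_0 A_g) = \cZ_T \cap U$.

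The main technical obstacle is this reverse direction of the Zariski locality: one must extend a closed subscheme $V(J') \subset U$ together with its $F$-point to a closed subscheme of $T$ still containing $T'$ and still equipped with a morphism to $F$. The ideal $\tilde J'$ is engineered so that the ring-theoretic injectivity $A/\tilde J' \hookrightarrow (A/\tilde J')_g \times A/I$ holds, which is exactly the hypothesis needed to invoke Lemma~\ref{lemma:descent-by-sch-dominant-1} and thereby glue the two given $F$-points across $V(\tilde J')$; everything else is formal bookkeeping driven by the monomorphicity of $F\to S$.
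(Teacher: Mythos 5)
Your construction is correct, and its skeleton coincides with the paper's: on an affine piece you take the intersection $J_0$ of the ideals of all admissible closed subschemes and descend the family of $F$-points through the injection $A/J_0\hookrightarrow\prod_J A/J$ via Lemma~\ref{lemma:descent-by-sch-dominant-1}, exactly as in the paper, with maximality built in. Where you genuinely diverge is the locality step. The paper compares $\cZ_U$ and $\cZ_V$ for nested affine opens $U\subset V$ by taking the schematic image $Z$ of $\cZ_U\to V$ (a quasi-compact, hence schematically dominant, map onto $Z$) and invoking Lemma~\ref{lemma:descent-by-sch-dominant-2} to push the $F$-point to $Z$, then concluding by maximality in $V$. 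You instead stay entirely ideal-theoretic: for a principal open $D(g)$ you form $\tilde J'=\{a\in I: a/1\in J'\}$, which is the ideal of the scheme-theoretic union of $T'$ with the schematic closure of $V(J')$, check $\tilde J'A_g=J'$ and the injectivity of $A/\tilde J'\to (A/\tilde J')_g\times A/I$ (both verifications are correct), and apply Lemma~\ref{lemma:descent-by-sch-dominant-1} once more to the two-piece disjoint union $V(J')\amalg T'$. This avoids Lemma~\ref{lemma:descent-by-sch-dominant-2} and the schematic-image formalism altogether, and by adjoining $I$ into $\tilde J'$ you automatically keep $T'$ inside the extended subscheme, a point the paper's schematic-image step handles only implicitly. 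What the paper's route buys is a direct comparison for arbitrary nested affine opens, after which the gluing over an affine cover and the restriction property are spelled out in three short paragraphs; your reduction ``in view of the Zariski locality it suffices to treat $T$ affine'' compresses precisely that globalization (compatibility of $\cZ_U$ and $\cZ_V$ on overlaps of two affine opens via opens standard in both, gluing, and maximality of the glued object). That bookkeeping is routine and your principal-open statement suffices for it, but in a full write-up you should include it, as the paper does.
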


\begin{proof}
Throughout, for all open subschemes $U\subset T$ we write
$U'=U\cap T'$ and all closed subschemes $Z$ of $U$ such that
$\xi'_{|U'}$ extends to $Z$ are implicitly assumed to contain $U'$.
We proceed by steps.

Let $U=\Spec(A)$ be an affine open subscheme of $T$. Consider the
family of all closed subschemes $Z_\alpha=V(I_\alpha)\subset U$
to which $\xi'_{|U'}$ extends. Consider the ideal $I=\cap I_\alpha$
and define $\cZ_U=V(I)$. Since the map $A/I\to \prod A/I_\alpha$ is
injective, applying Lemma~\ref{lemma:descent-by-sch-dominant-1}, we
see that $\xi'_{|U'}$ extends to $\cZ_U$. By its very definition
the closed subscheme $\cZ_U$ is largest.

Let $U,V$ be two affine opens of $T$ with $U\subset V$. We claim
that $\cZ_V\cap U=\cZ_U$. Indeed, since $\xi'_{|V'}$ extends to $\cZ_V$
then $\xi'_{|U'}$ extends to $\cZ_V\cap U$, hence $\cZ_V\cap U\subset \cZ_U$.
Conversely, let $Z$ be the schematic image of $\cZ_U\to U\to V$.
The latter map being quasi-compact, the map $\cZ_U\to Z$ is
schematically dominant. By Lemma~\ref{lemma:descent-by-sch-dominant-2}
it follows that $\xi'_{Z_U}$ extends to $Z$. By maximality this
forces $Z\subset \cZ_V$, hence $\cZ_U\subset \cZ_V\cap U$.

Let $U,V$ be arbitrary affine opens of $T$. We claim that
$\cZ_U\cap V=\cZ_V\cap U$. Indeed, by the previous step, for all affine
opens
$W\subset U\cap V$ we have $\cZ_U\cap V\cap W=\cZ_W=\cZ_V\cap U\cap W$.

Let $\cZ_T$ be the closed subscheme of $T$ obtained by gluing
the $\cZ_U$ when $U$ varies over all affine opens; thus $\cZ_T\cap U=\cZ_U$
by construction. Now $\xi'$ extends to $\cZ_T$, because $\xi'_{|U}$
extends to $\cZ_U$ for each $U$, and we can glue these extensions.
Moreover $\cZ_T$ is maximal with this property, because if $\xi'$
extends to some closed subscheme $Z\subset T$ then for each affine
$U$ the element $\xi'_{|U}$ extends to $Z\cap U$, hence
$Z\cap U\subset \cZ_U=\cZ_T\cap U$, hence $Z\subset \cZ_T$.

The fact that $\cZ_T\cap U=\cZ_U$ for all open subschemes $U$ follows
by restricting to affine opens.
\end{proof}

In order to verify Condition $(F_8)$, we write $T=\Spec(A)$
and $T'=\Spec(A/I)$. The assumption that
$\xi'_t:\Spec((A/I)_t)\to \Spec(A/I)\to F$ does not lift to any
subscheme of $\Spec(A_t)$ which is strictly larger than
$\Spec((A/I)_t)$ means that the inclusion $T'\subset\cZ_T$
is an equality at the generic point. It follows that
$T'\cap W=\cZ_T\cap W=\cZ_W$ for some open $W$.
Applying Lemma~\ref{lemma-maximal-closed} to variable opens
$W_1\subset W$, we obtain $T'\cap W_1=\cZ_T\cap W_1=\cZ_{W_1}$,
which shows that $W$ fulfills the required condition and
we are done. \hfill $\square$

\bigskip

This concludes the proof of Theorem~\ref{simplified-situation}
and thus also of Theorem~\ref{theo:Hom_for_mult_groups}.

\section{Homomorphisms from a group with reductive and
proper composition factors}

In this section we build on Theorem~\ref{theo:Hom_for_mult_groups}
to prove our main results on the functors of homomorphisms
of group schemes and functors of reductive subgroups:
to wit, Theorems~\ref{th-2} and~\ref{th-3} from the Introduction.

\subsection{Homomorphisms from a reductive group}
\label{subsection:functor_reductive_subgroups}

In this section $G$ and $H$ are $S$-group schemes.

\begin{lemma} \label{lemma:Mono_reductive_separated}
Assume that $G\to S$ is reductive and $H\to S$ is separated
and of finite presentation. Let
$\Hom(G,H)$ be the functor of morphisms of group schemes,
and $\Mono(G,H)$ the subfunctor of monomorphisms.
Then the following hold.
\begin{itemize}
\itemn{1} The inclusion $\Mono(G,H)\subset \Hom(G,H)$ is
representable by open immersions.
\itemn{2} If $G\to S$ has no geometric fibre of
characteristic $2$ containing a direct factor isomorphic
to $\SO_{2n+1}$ for some $n\ge 1$, then for each maximal
torus $T\subset G$ the following commutative
diagram is cartesian:
\[
\begin{tikzcd}
\Mono(G,H) \ar[r,hook] \ar[d] & \Hom(G,H) \ar[d] \\
\Mono(T,H) \ar[r,hook] & \Hom(T,H).
\end{tikzcd}
\]
\end{itemize}
\end{lemma}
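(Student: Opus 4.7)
My approach is to establish (1) by showing that the monomorphism locus is cut out by the triviality of the kernel subgroup scheme of $f$, and then to reduce (2), via (1), to a purely group-theoretic statement about normal subgroup schemes of reductive groups over algebraically closed fields.

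For (1), let $f \colon G_X \to H_X$ be an object of $\Hom(G,H)(X)$; its kernel $K := f^{-1}(e_{H_X})$ is a closed subgroup scheme of $G_X$, affine (since $G$ is affine) and of finite presentation over $X$ (since $G$ and $H$ are), and $f$ is a monomorphism if and only if $K$ coincides with the trivial subgroup scheme $e(X)$. I would reduce to $X$ noetherian by standard limit techniques, then use upper semi-continuity of the fibral dimension of $K \to X$ to see that $\{x : \dim K_x \le 0\}$ is open; over this locus $K$ is quasi-finite and affine, hence finite, and upper semi-continuity of the fibral length carves out the open locus $\{x : \length(K_x) \le 1\}$. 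Since the identity section $e \colon X \hookrightarrow K$ is a closed immersion, one has $\length(K_x) \ge 1$ with equality exactly when $K_x = e(x)$; this gives the desired openness.

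For (2), the key preliminary observation is that applying (1) with the reductive torus $T$ in place of $G$ shows the lower horizontal arrow is itself an open immersion, so the Cartesian property reduces to the pointwise biconditional: $f \in \Hom(G,H)(X)$ is a monomorphism if and only if $f|_{T_X}$ is. The implication $\Rightarrow$ is immediate. For $\Leftarrow$, openness of both sides lets me check the claim on geometric points, where it becomes: for $G$ connected reductive over an algebraically closed field $k$, with no direct factor $\SO_{2n+1}$ if $k$ has characteristic $2$, any closed normal subgroup scheme $N \trianglelefteq G$ satisfying $N \cap T = 1$ must be trivial. If $\dim N > 0$, the reduced identity component $(N^0)^{\red}$ is a smooth connected normal subgroup of $G$ with trivial unipotent radical (as $G$ is reductive), hence itself reductive; being non-trivial, it contains a non-trivial subtorus, which by the conjugacy of maximal tori in $G$ admits a conjugate contained in $T$, and by normality of $N$ this conjugate lies also in $N$, contradicting $N \cap T = 1$. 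Hence $N$ is finite; under the stated hypothesis, the classification of closed normal subgroup schemes of reductive groups (\cite{SGA3.2}, Exp.~XXII) then forces $N$ to be central, and since $Z(G) \subset T$ for every maximal torus $T$, one concludes $N \subset T$, whence $N = N \cap T = 1$.

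The main obstacle is this last step in (2). \'Etale normal subgroups are central by the standard rigidity argument (an \'etale group scheme is rigid under conjugation by a connected group), but infinitesimal normal subgroup schemes can fail to be central, as witnessed by the exceptional isogeny $\SO_{2n+1} \to \Sp_{2n}$ in characteristic~$2$ whose kernel is a non-central infinitesimal normal subgroup; the lemma's hypothesis is precisely what the classification in \cite{SGA3.2} requires to rule out such pathologies.
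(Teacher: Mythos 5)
The decisive problem is in your proof of (1): as written it never uses the reductivity of $G$, yet the statement is false without it. Over a discrete valuation ring $R$ with residue characteristic $p$ and uniformizer $\pi$, the homomorphism $f:\GG_{a,R}\to\GG_{a,R}$, $x\mapsto x-\pi x^{p}$, has kernel $K=\Spec R[x]/(x-\pi x^{p})$ with trivial special fibre and \'etale generic fibre of order $p$; so the locus where $f$ is a monomorphism is the closed point, which is not open. The step of yours that breaks is ``quasi-finite and affine, hence finite'': a quasi-finite affine morphism need not be finite, and this very $K$ is a counterexample (its fibre rank drops from $p$ to $1$ at the closed point, which is impossible for a finite module, so $K$ is not finite over $R$ even though it is a closed, affine, quasi-finite subgroup scheme of $\GG_{a,R}$). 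Without finiteness, neither the upper semicontinuity of fibre length nor the implicit Nakayama step ``fibrewise trivial kernel $\Rightarrow$ trivial kernel scheme'' is available. The real content of (1) is exactly that, for $G$ reductive, triviality of $K_{s_0}$ propagates to generizations; the paper proves this by reducing to a trait and invoking the structure theory: $(K_s)^{\circ}_{\red}$ is a smooth connected normal subgroup containing no torus, hence unipotent, hence trivial; multiplicative subgroup schemes of $K_s$ (\'etale, or infinitesimal via Geiss--Voigt) are conjugated into $T$ and killed by $K\cap T=1$; the infinitesimal unipotent part of $K_s$ is handled either by Vasiu's theorem or, in the general case, by a flat scheme-theoretic closure inside a Frobenius kernel together with $K_{s_0}=1$ and the separatedness of $H$; and the remaining \'etale part is central by rigidity, hence trivial. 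None of this is a soft semicontinuity argument.

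In (2), your reduction to geometric points via (1) is fine and your treatment of the positive-dimensional case is correct, but the finite case is where the theorem lives, and your appeal to ``the classification of closed normal subgroup schemes of reductive groups in SGA3, Exp.~XXII'' does not work: the assertion ``finite normal $\Rightarrow$ central'' under the characteristic-$2$ hypothesis is false as stated, since Frobenius kernels are finite infinitesimal normal and non-central for every non-commutative reductive group in characteristic $p$, with or without $\SO_{2n+1}$ factors. You must reuse the hypothesis $N\cap T=1$ at this stage: first rule out multiplicative subgroup schemes of $N$ (the infinitesimal multiplicative case requires the theorem of Geiss and Voigt that such subgroups lie in maximal tori, a result not in SGA3), leaving $N$ finite unipotent; the infinitesimal unipotent normal part is then precisely what Vasiu's theorem excludes under the no-$\SO_{2n+1}$-in-characteristic-$2$ hypothesis (again a post-SGA3 input); only the \'etale unipotent remainder is disposed of by rigidity and the fact that the centre is of multiplicative type. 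So the ``main obstacle'' you identify is real, but it is not resolved by the reference you give.
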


The refined statement~(2) will not be needed in the paper,
but we find it worth reporting.

\begin{proof}
We handle both cases~(1) and (2) simultaneously, with only
a little variation in the end.

The question of representability of
$\Mono(G,H)$ by an open subscheme of $\Hom(G,H)$ is \'etale-local
over $S$ so we may assume that $S$ is affine and that there
exists a maximal torus $T\subset G$ (\cite{SGA3.2}, Exp.~XII,
Th.~1.7). Also since $G$ and $H$ are finitely
presented, we may assume that $S$ is noetherian.

Let $f:G\to H$
and $K\defeq\ker(f)$. Restricting to the open locus where $f_{|T}$
is a monomorphism (\cite{SGA3.2}, Exp.~IX, Cor.~6.6)
we may also assume that $K\cap T=1$.

Let $S_0\subset S$ be the locus of points $s$ such that $K_s$
is trivial. We claim it is enough to prove that $S_0$
is open. Indeed, in this case $K\times_SS_0\to S_0$ is finite
and the augmentation ideal of its structure sheaf is zero in
each fibre, hence zero. It follows that $K\times_SS_0$ is
trivial and $S_0$ represents the subfunctor of $S$ defined
by the condition that~$f$ is a monomorphism.

We proceed to prove that $S_0$ is open in $S$. Since it is
constructible and $S$ is noetherian, it is enough to prove
that it is stable by generization, see \cite{EGA} IV$_3$.9.6.1.
Let $s\leadsto s_0$ be a specialization with $s_0\in S_0$ and
let $S'\to S$ be a morphism from a trait (spectrum of a
discrete valuation ring) whose image witnesses this
specialization. Let $S''\to S'$ be a ramified extension whose
residue field contains the field of definition of the geometric
nilpotent ideal of $K_{s''}$, so in particular
$(K_{s''})_{\red}$ is a smooth subgroup scheme of the
generic fibre $K_{s''}$. Replacing~$S$ by $S''$ we can assume
that $S$ is a trait and $(K_s)_{\red}$ is a smooth subgroup.
We have to prove that $K_s$ is trivial, assuming that
$K_{s_0}=1$ in case~(1) and that $K_{s_0}\cap T_{s_0}=1$
in case~(2).

Let $(K_s)^\circ_{\red}$ be the reduced identity component.
By the assumption $K\cap T=1$ together with conjugacy
of maximal tori, the normal, smooth connected subgroup
$(K_s)^\circ_{\red}$ contains no torus, hence is unipotent.
Since $G_s$ is reductive, we find $(K_s)^\circ_{\red}=1$.
This shows that $K_s$ is finite. If $L\subset K_s$ is a subgroup
of multiplicative type which is either \'etale or infinitesimal,
there is a maximal torus of $G_s$ containing $L$: in the \'etale
case this is standard, and in the infinitesimal case this is
Th.~1.1 of Geiss and Voigt \cite{GV04}. After a further \'etale
extension $S'/S$ if needed, a suitable conjugate of $L$ lies
in $K\cap T$, hence $L=1$. It follows that $K_s$ is finite
unipotent.

The neutral component $K_s^\circ$ is infinitesimal unipotent.
When $G\to S$ has no characteristic $2$ geometric fibre
containing a direct factor isomorphic to $\SO_{2n+1}$, the main
result of Vasiu~\cite{Va05} says that $G_s$ has no infinitesimal
unipotent group scheme, hence $K_s^\circ=1$.
In the general case, letting $G_d=\ker(F^d:G\to G^{(d)})$ be
the $d$-th Frobenius kernel of $G\to S$, we have
$K_s^\circ\subset (G_d)_s$ for large enough~$d$. Since $G_d$
is finite flat over $S$, the scheme-theoretic closure
$C\subset G_d$ of $K_s^\circ$ is finite flat also. Since $H$
is separated we have $C\subset K$ and from the assumption
$K_{s_0}=1$ we deduce that the rank of $C\to S$ is~1,
that is $C=1$. Thus $K^\circ_s=C_s=1$ and the
upshot is that $K_s$ is \'etale (and finite unipotent).

Then the automorphism scheme $\Aut(K_s)$ is \'etale, so by
connectedness of $G_s$ the conjugation action $G_s\to\Aut(K_s)$
is trivial. Therefore $K_s$ is central, and since the center
of $G_s$ is a torus, it is trivial.
\end{proof}

\begin{remark}
Let $S$ be the spectrum of a field $k$ of characteristic 2
and $V=ke_0\oplus\dots\oplus ke_{2n}$ the standard vector
space of dimension $2n+1$. Let $G=\SO(q)$ be the
orthogonal group of the quadratic form
$q(x_0,\dots,x_{2n})=x_0^2+x_1x_{n+1}+\dots+x_nx_{2n}$.
The polarization $\psi(x,y):=q(x+y)-q(x)-q(y)$ is
alternating with kernel $V_0=ke_0$, inducing a
nondegenerate alternating form $\psi_0$ on $W_0=V/V_0$.
This gives rise to an isogeny
$f:\SO_{2n+1}=\SO(q)\to\Sp_{2n}=\Sp(\psi_0)$ whose
kernel is isomorphic to $\alpha_2^{2n}$, see \cite{Va05},
\S~2.1. Let $T\subset \SO_{2n+1}$ be a maximal torus.
The restriction of $f$ to $T$ is a monomorphism; in other
words, the commutative square of
Lemma~\ref{lemma:Mono_reductive_separated}
is not cartesian.
\end{remark}

\begin{lemma} \label{lemma:Isom_reductive}
Assume that $G\to S$ is reductive and $H\to S$ is separated,
of finite presentation, flat and pure. Let
$\Isom(G,H)$ be the functors of isomorphisms of group
schemes.
\begin{trivlist}
\itemn{1} The inclusion $\Isom(G,H)\subset \Mono(G,H)$ is
representable by closed immersions of finite presentation,
\itemn{2} The inclusion $\Isom(G,H)\subset \Hom(G,H)$ is
representable by immersions locally of finite presentation.
\end{trivlist}
\end{lemma}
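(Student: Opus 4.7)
My plan is to prove~(1) first and then deduce~(2) immediately: the composition of the open immersion $\Mono(G,H) \hookrightarrow \Hom(G,H)$ from Lemma~\ref{lemma:Mono_reductive_separated}(1) with the closed immersion of finite presentation $\Isom(G,H) \hookrightarrow \Mono(G,H)$ given by~(1) is an immersion locally of finite presentation, which is exactly the content of~(2).

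For~(1), I first reduce to $S$ affine noetherian via the standard limit arguments. Let $T$ be an $S$-scheme and $\varphi \colon G_T \to H_T$ a monomorphism representing a $T$-point of $\Mono(G,H)$. I upgrade $\varphi$ to a closed immersion of finite presentation: the morphism $\varphi$ is affine (source affine, target separated) and of finite type, and on each fibre $\varphi_t$ is a monomorphism of affine group schemes of finite type over a field, hence a closed immersion by a classical fact (cf.~\cite{SGA3.2}). Combined with $T$-flatness and finite presentation of $G_T$ and $H_T$, the fibrewise criterion for closed immersions upgrades $\varphi$ itself to a closed immersion of finite presentation. I then consider the ideal sheaf $\mathcal{J}_\varphi \defeq \ker(\mathcal{O}_{H_T} \twoheadrightarrow \varphi_* \mathcal{O}_{G_T})$, which is finitely presented and $T$-flat (the latter via the short exact sequence of $T$-flat finitely presented $\mathcal{O}_{H_T}$-modules). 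The subfunctor $\Isom(G,H) \subset \Mono(G,H)$ pulled back to $T$ is precisely the subfunctor of morphisms $T' \to T$ for which $\mathcal{J}_\varphi \otimes_T \mathcal{O}_{T'}$ vanishes as a subsheaf of $\mathcal{O}_{H_{T'}}$.

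The remaining and main task is to show that this vanishing condition is represented by a closed subscheme of $T$ cut out by a finitely presented ideal, which is where flatness and purity of $H$ enter essentially. Set-theoretically the iso locus is constructible (the complement of a Chevalley image) and stable under specialization: over a trait with generic point $\eta$ and closed point $s$, the $T$-flat $\mathcal{J}_\varphi$ is torsion-free, so its vanishing above $\eta$ forces its vanishing above~$s$; combined with constructibility, the iso locus is closed as a subset. The delicate point is the scheme structure: one would like to produce the ``smallest ideal $I_V \subset \mathcal{O}_T$ such that $\mathcal{J}_\varphi \subset I_V \cdot \mathcal{O}_{H_T}$'' and prove it is finitely presented. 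This is where purity of $H \to S$ is crucial: without it, the pushforward $\pi_* \mathcal{J}_\varphi$ along the non-proper morphism $\pi \colon H_T \to T$ may fail to yield a coherent ideal of the required finiteness type. One concrete way to package this is via a Hilbert-scheme-type parameter space of $T$-flat finitely presented closed subschemes of $H_T$, within which the closed subschemes $\varphi(G_T) \subset H_T$ and $H_T \subset H_T$ define two $T$-points that coincide precisely on the desired closed subscheme of finite presentation in~$T$; purity is what makes such a parameter space well-behaved when $H \to T$ is not proper.
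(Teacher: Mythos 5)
Your skeleton matches the paper's (you identify $\Isom$ inside $\Mono$ as the vanishing locus of the kernel $\mathcal{J}_\varphi$ of $\cO_{H_T}\to\varphi_*\cO_{G_T}$, and you deduce (2) from (1) together with Lemma~\ref{lemma:Mono_reductive_separated}(1) exactly as the paper does), but both key steps are left unsupported. First, there is no ``fibrewise criterion for closed immersions'' for non-proper morphisms, even for finitely presented affine monomorphisms with smooth (hence flat and pure) source: over a discrete valuation ring $R$ with uniformizer $\pi$, the map $\AA^1_R\to\AA^2_R$, $u\mapsto(u+\pi u^2,\pi u)$, is an affine, finitely presented monomorphism whose special and generic fibres are closed immersions, yet it is not a closed immersion (its schematic image is $V(\pi x-y-y^2)$, which is $R$-flat with a two-component special fibre, so it cannot be isomorphic to $\AA^1_R$). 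So flatness of $G_T$ over $T$ plus the fibrewise statement over fields does not upgrade $\varphi$ to a closed immersion; what is really needed is the group-theoretic theorem that any monomorphism from a reductive group scheme to a separated, finitely presented group scheme is a closed immersion, i.e.\ \cite{SGA3.2}, Exp.~XVI, Cor.~1.5.a), which is precisely the citation the paper uses at this point.

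Second, the heart of the matter --- representing the condition $\mathcal{J}_\varphi\otimes_{\cO_T}\cO_{T'}=0$ by a closed subscheme of finite presentation of $T$ --- is only gestured at. Your constructibility-plus-specialization argument gives at best the underlying closed subset, not the scheme structure with a finitely presented ideal, and the proposed ``Hilbert-scheme-type parameter space of $T$-flat closed subschemes of $H_T$'' is not available: $H_T\to T$ is not proper, and the Hilbert functor invoked elsewhere in the paper (Lemma~\ref{lemma:functor_proper_subgroups}) only parametrizes proper flat subschemes. The tool that actually closes the argument is Raynaud--Gruson \cite{RG71}, Premi\`ere partie, Th.~(4.1.1), applied to the surjection $\cO_{H_T}\to\varphi_*\cO_{G_T}$ of $T$-flat modules, using that $H_T\to T$ is flat, pure and finitely presented; equivalently one can apply Theorem~\ref{theo:Weil_restriction} of the appendix to the closed subscheme $\varphi(G_T)\subset H_T$, whose Weil restriction along $H_T\to T$ is exactly the locus you want and is a closed subscheme of finite presentation. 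With these two inputs your argument collapses to the paper's proof; without them it has genuine gaps at both places.
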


\begin{proof}
(1) By \cite{SGA3.2}, Exp.~XVI, Cor.~1.5.a) any
monomorphism $f:G\to H$ is a closed immersion. Thus
$\Isom(G,H)\subset \Mono(G,H)$ is the subfunctor defined
by the condition that the surjective map of sheaves
$\cO_H\to f_*\cO_G$ is an isomorphism. It follows
from \cite{RG71}, Premi\`ere partie, Th.~(4.1.1) that this
condition is representable by closed immersions
of finite presentation.

\smallskip

\noindent (2) Follows from (1) and
Lemma~\ref{lemma:Mono_reductive_separated}.
\end{proof}

\begin{theorem} \label{theorem:Hom_reductive_affine}
Assume that $G\to S$ is reductive and $H\to S$ is affine
and of finite presentation. Then, the following hold.
\begin{trivlist}
\itemn{1} $\Hom(G,H)$ is representable by an $S$-scheme
separated and locally of finite presentation.
\itemn{2} Each subscheme (resp. closed subscheme) of
$\Hom(G,H)$ which is quasi-compact over $S$ is
quasi-affine (resp. affine) over $S$.
\end{trivlist}
\end{theorem}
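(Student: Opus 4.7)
The plan is to reduce the statement for a reductive $G$ to the diagonalizable case already treated in Theorem~\ref{theo:Hom_for_mult_groups}, by restricting homomorphisms to a maximal torus. Both parts of the conclusion are Zariski-local on $S$, so we may assume $S=\Spec(R)$ is affine; since $G$ and $H$ are of finite presentation, a limit argument (\cite{EGA}~IV, \S~8) further reduces us to the case where $R$ is noetherian. Both assertions are also \'etale-local on $S$, so after an \'etale base change we may assume that $G$ admits a maximal torus $T\subset G$, using \cite{SGA3.2}, Exp.~XII, Th.~1.7.

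The heart of the proof is to show that the restriction morphism
\[
\rho\colon \Hom(G,H)\too\Hom(T,H),\quad f\longmapsto f_{|T},
\]
is representable by an affine morphism of finite presentation. Together with Theorem~\ref{theo:Hom_for_mult_groups}, which represents $\Hom(T,H)$ as a separated $S$-scheme locally of finite presentation, this yields~(1). The argument for the affineness of $\rho$ follows the strategy of \cite{SGA3.2}, Exp.~XXIV, adapted to our non-smooth target: \'etale-locally on $S$, the group $G$ is generated by $T$ together with finitely many root subgroups $U_\alpha\simeq\GG_a$ subject to Chevalley relations, so that extending a $T$-homomorphism $f_T$ to a group homomorphism $G\to H$ amounts to choosing compatible morphisms $f_\alpha\colon U_\alpha\to H$. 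For fixed $f_T$, each $f_\alpha$ is forced to land in the $\alpha$-weight subscheme of $H$ for the $T$-action on $H$ via $f_T$ (a closed condition on $H$), and the totality of such tuples $(f_\alpha)$ satisfying the Chevalley relations forms an affine scheme of finite presentation over $\Hom(T,H)$.

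For part~(2), we transport the property from $\Hom(T,H)$ across the affine morphism $\rho$. By the proof of Theorem~\ref{theo:Hom_for_mult_groups}, $\Hom(T,H)$ admits a monomorphism into an $S$-affine scheme $L$, namely $L=\lim_n\Hom(\mu_{\ell^n},H)$ in the $\GG_m$-case and $L=\Hom(N,H)$ in the finite case. Any quasi-compact subscheme $V\subset\Hom(T,H)$ is of finite presentation over $S$, so the composition $V\hookrightarrow L$ is a quasi-compact monomorphism of finite presentation, hence a quasi-compact immersion, and therefore $V$ is quasi-affine over $S$; when $V$ is moreover closed in $\Hom(T,H)$ and qcpt over $S$, the schematic image of $V$ in $L$ is closed, so $V\hookrightarrow L$ is a closed immersion and $V$ is affine over $S$. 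Composing with the affine morphism $\rho$ then propagates both properties to $\Hom(G,H)$. The main obstacle throughout is precisely the affineness of $\rho$ for non-flat, non-smooth $H$: the classical SGA 3 argument addresses a smooth target, and the task is to verify that the root-subgroup compatibility conditions (weight-space membership under $T$-conjugation together with Chevalley--Steinberg relations) continue to cut out a closed, hence affine, subscheme of an ambient affine scheme, independently of the flatness or smoothness of $H$.
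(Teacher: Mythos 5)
Your overall skeleton (restrict to a maximal torus, prove the restriction map $\Hom(G,H)\to\Hom(T,H)$ is affine, and feed in Theorem~\ref{theo:Hom_for_mult_groups}) is the same as the paper's, but two of your key steps fail as stated. First, in part~(2) you claim that a quasi-compact monomorphism of finite presentation $V\to L$ into an affine scheme is automatically a quasi-compact immersion. This is false in general (the standard counterexample is the normalization of a nodal curve with one of the two preimages of the node removed, mapping to the curve); monomorphisms locally of finite presentation are immersions only under extra hypotheses such as flatness, which is unavailable here. This is exactly the point where the paper has to work: it embeds $\Hom(G,H)$ into $\Hom(T,H)\times H^{2n}$ by recording $f_{|T}$ together with the images of the finitely many pinning sections $u_\alpha,w_\alpha$ (a closed immersion by SGA3), takes the schematic image $Z$ of the given quasi-compact subspace $Y$ inside the affine scheme $L\times H^{2n}$, and then invokes descent along schematically dominant morphisms (Lemma~\ref{lemma:descent-by-sch-dominant-2}) to show that $Z$ still maps to $\Hom(T,H)\times H^{2n}$, so that $Y\to Z$ is an immersion (an isomorphism when $Y$ is closed). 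Without a substitute for that descent lemma your argument for (2) does not go through; and since representability by a scheme is not an \'etale-local property, your part~(1) also only yields a separated algebraic space after \'etale localization — schemeness has to be recovered precisely from the quasi-affineness statement (2), as the paper does in its final step.

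Second, your proposed proof of affineness of $\rho$ by parametrizing extensions of $f_T$ through morphisms $f_\alpha:U_\alpha\to H$ is not viable as written: the space of morphisms (even of group homomorphisms) from $\GG_a$ to an affine scheme is typically not of finite type, and ``the $\alpha$-weight subscheme of $H$'' has no meaning for a group scheme $H$ that is not given inside a linear representation, so the claim that these tuples form an affine scheme of finite presentation over $\Hom(T,H)$ is unsupported. The correct mechanism, and the one the paper simply cites, is SGA3's rigidity for pinned reductive groups: a homomorphism from $G$ is determined by its restriction to $T$ and the images of finitely many elements, and the affineness of $\Hom(G,H)\to\Hom(T,H)$ is \cite{SGA3.3}, Exp.~XXIV, Cor.~7.1.9, which does not require $H$ to be smooth or flat — so there is nothing to ``adapt'' there; the genuinely new ingredient of the theorem is the schematic-image plus descent argument in (2), not a reworking of the root-group analysis.
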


\begin{proof}
The fact that $\Hom(G,H)\to S$ is locally of finite
presentation is checked using the usual results on limits
from \cite{EGA}~IV, \S~8. Hence it will be enough
to complete the following three steps:
\begin{trivlist}
\itemm{i} prove that $\Hom(G,H)$ is a separated $S$-algebraic
space;
\itemm{ii} prove (2) (with ``subspace'' replacing ``subscheme'');
\itemm{iii} deduce that $\Hom(G,H)$ is a scheme.
\end{trivlist}

\noindent (i)
This question is \'etale-local over $S$ so by \cite{SGA3.3},
Exp.~XIX, Th.~2.5 we may assume that there exists a split
maximal torus $T\subset G$. Then it follows from
\cite{SGA3.3}, Exp.~XXIV, Cor.~7.1.9 that the restriction
map $\Hom(G,H)\to \Hom(T,H)$ is representable and affine.
Since $\Hom(T,H)\to S$ is representable and separated
by Theorem~\ref{theo:Hom_for_mult_groups}, we
see that $\Hom(G,H)\to S$ is representable by a separated
$S$-algebraic space.

\medskip

\noindent (ii)
Let $Y\subset \Hom(G,H)$ be a subspace which is
quasi-compact over $S$. In order to show that $Y$ is
quasi-affine over $S$ we can afford an \'etale base change
$S'\to S$, so we can assume that there exists a split
maximal torus $T\subset G$. Let $T_n=\ker(n:T\to T)$ be
the finite flat torsion subschemes. The limit
\[
L=\lim \Hom(T_n,H)
\]
is an affine $S$-scheme. As in \cite{SGA3.2}~7.1.1 we choose
a pinning of $G$ with a set of simple roots $\Delta$ of
cardinality $n$, and associated elements
$u_\alpha\in U_\alpha^\times(S)$, $w_\alpha\in\Norm_G(T)(S)$
for $\alpha\in\Delta$ (see {\em loc. cit.} for the precise
definition of these elements). It follows from \cite{SGA3.2},
Proposition~7.1.2 that the morphism
\[
\begin{tikzcd}[column sep=20,row sep=0
    ,/tikz/column 1/.append style={anchor=base east}
    ,/tikz/column 2/.append style={anchor=base west}
    ]
\Hom(G,H) \ar[r] & \Hom(T,H)\times H^{2n} \\
f \ar[r,mapsto] &
\big(f_{|T},(f(u_\alpha),f(w_\alpha))_{\alpha\in \Delta}\big)
\end{tikzcd}
\]
is a closed immersion. Using the Density Theorem we have
a sequence of monomorphisms
\[
\begin{tikzcd}[column sep=20]
F:=\Hom(G,H) \ar[r,closed] & \Hom(T,H)\times H^{2n}
\ar[r,hook] & L':=L\times H^{2n}.
\end{tikzcd}
\]
Let $Z\subset L'$ be the schematic image of $Y\to L'$;
this is a closed subscheme, hence affine over $S$.
Since~$Y$ is quasi-compact and $L'$ is separated, then
$Y\to L'$ is quasi-compact. Thus the morphism $Y\to Z$
is schematically dominant (\cite{SP22},
Tag~\href{https://stacks.math.columbia.edu/tag/01R8}{01R8}).
It then follows from Lemma~\ref{lemma:descent-by-sch-dominant-2}
that $Y\to \Hom(T,H)$ factors uniquely through a map
$Z\to \Hom(T,H)$. Thus the closed immersion $Z\to L'$ factors
through a closed immersion $Z\to \Hom(T,H)\times H^{2n}$.
Then $Y\to Z$ is an immersion; in particular it is
quasi-affine so~$Y$ is quasi-affine over $S$. If at the
start it is assumed that $Y\subset \Hom(G,H)$ is an
$S$-quasi-compact {\em closed} subspace, then $Y\to Z$
is a closed immersion; being also schematically dominant,
it is an isomorphism, hence $Y$ is affine over $S$ in this
case.

\medskip

\noindent (iii)
Let $\Hom(G,H)=\cup Z_i$ be a covering by quasi-compact
open subspaces. According to (ii) the $Z_i$ are quasi-affine
over $S$. In particular they are schemes, hence
$\Hom(G,H)$ is a scheme.
\end{proof}

\subsection{Functor of reductive subgroups}

Recall that a {\em type} is
by definition an isomorphism class $\type=[\sR]$ of
{\em root datum} (\cite{SGA3.2}, XXII.2.6.1), the latter
being a quadruple $\sR=(M,M^*,R,R^*)$
composed of finite type free $\ZZ$-modules in duality
$M,M^*$ and finite subsets $R\subset M$, $R^*\subset M^*$
in duality called {\em root system} and {\em coroot system}
(\cite{SGA3.2}, XXI.1.1.1). For example, the type of a
diagonalizable group $T=D(M)$ is
$\type=[(M,M^*,\varnothing,\varnothing)]$.

\begin{theorem} \label{theorem:subgroups_reductive_affine}
Assume that $H\to S$ is an affine group scheme of finite
presentation.
\begin{trivlist}
\itemn{1} The functor $\Sub_{\red}(H)$ of reductive
subgroups of $H$ is representable by an algebraic space
separated and locally of finite presentation which is
a disjoint sum
\[
\Sub_{\red}(H)=\underset{\type}{\textstyle\coprod}
\,\Sub_{\type}(H)
\]
indexed by the types of reductive groups.
\itemn{2} The summand of subgroups {\em of multiplicative type}
\[
\Sub_{\mult}(H)=\underset{\type=[(M,M^*,\varnothing,\varnothing)]}{\textstyle\coprod}
\,\Sub_{\type}(H)
\]
is representable by a scheme with the following property:
each subscheme (resp. closed
subscheme) of $\Sub_{\mult}(H)$ which is quasi-compact over $S$,
is quasi-affine (resp. affine) over $S$.
\end{trivlist}
\end{theorem}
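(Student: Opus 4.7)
The plan is to present $\Sub_{\type}(H)$ as the étale quotient $\Mono(G_0,H)/\Aut(G_0)$ for a Chevalley form $G_0$ of type~$\type$, and then combine Theorem~\ref{theorem:Hom_reductive_affine} with standard results on group quotients. The decomposition $\Sub_{\red}(H) = \coprod_{\type}\Sub_{\type}(H)$ follows from the local constancy on the base of the root datum of a reductive group scheme (\cite{SGA3.3}, Exp.~XXII), which makes each $\Sub_{\type}(H)$ open and closed. For a representative root datum $\sR$ of each type, set $G_0 := G(\sR)_S$; by the uniqueness of reductive group schemes of fixed type (\cite{SGA3.3}, Exp.~XXIII), every reductive subgroup $G\subset H_T$ of type~$\type$ is, étale-locally on~$T$, the image of a monomorphism $G_{0,T} \to H_T$, unique up to precomposition by an element of $\Aut(G_0)$. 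This yields
\[
\Sub_{\type}(H) \;\cong\; \Mono(G_0,H)\big/\Aut(G_0)
\]
as étale sheaves, and the action is free since $f\circ\phi = f$ with $f$ a monomorphism forces $\phi = \id$.

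For assertion~(1), Theorem~\ref{theorem:Hom_reductive_affine}(1) provides that $\Hom(G_0,H)$ is a separated $S$-scheme locally of finite presentation, Lemma~\ref{lemma:Mono_reductive_separated}(1) identifies $\Mono(G_0,H)$ with an open subscheme, and $\Aut(G_0)$ is an affine smooth $S$-group scheme (an extension of the finite étale outer automorphism group by the adjoint group $G_0^{\mathrm{ad}}$, see \cite{SGA3.3}, Exp.~XXIV). A free fppf-action of such a group on a separated, locally finitely presented algebraic space admits an algebraic space quotient with the same separatedness and finite presentation properties, which gives~(1).

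For assertion~(2) we specialize to $\type = [(M,M^*,\varnothing,\varnothing)]$, so $G_0 = D(M)$ is diagonalizable and $\Gamma := \Aut(D(M)) = \Aut(M)$ is finite étale; after an étale decomposition we may assume $M$, and hence $\Gamma$, is constant. By Theorem~\ref{theorem:Hom_reductive_affine}(2), $\Hom(D(M),H)$ admits the sequence of monomorphisms into the affine $S$-scheme $L' = L\times H^{2n}$ used in its proof, and the open subscheme $X := \Mono(D(M),H)$ inherits the property that every $S$-quasi-compact subscheme is quasi-affine over~$S$. Reducing to $S$ separated so that $X$ is absolutely separated, every $\Gamma$-orbit in $X$ lies in a $\Gamma$-stable affine open (intersect finitely many $\Gamma$-translates of an affine neighbourhood of the orbit), so Serre's criterion gives a scheme quotient $Y := X/\Gamma$. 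For the descent of the distinguished property, any $S$-quasi-compact $U\subset Y$ has $\Gamma$-stable preimage $\widetilde U\subset X$ which is $S$-quasi-compact; its schematic image $Z$ in $L'$ is a $\Gamma$-stable affine $S$-scheme, $\widetilde U\to Z$ is an open immersion (a quasi-compact schematically dominant immersion), and the quotient $Z/\Gamma$ is affine by classical invariant theory, so $U = \widetilde U/\Gamma$ is an open subscheme of an affine, hence quasi-affine over~$S$; if moreover $U$ is closed in $Y$, then $\widetilde U$ is closed in $X$, the open immersion $\widetilde U\to Z$ lands in the $\Gamma$-stable affine open $Z\setminus(Z\cap\overline{X^c})$ and one descends affineness to $U$.

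The main obstacle will be this final descent in~(2): ensuring that the "closed quasi-compact implies affine" clause survives the free quotient by~$\Gamma$ even though $X$ is only an open subscheme of $\Hom(D(M),H)$, so that $\Gamma$-stable closed subschemes of $X$ need not be closed in $L'$. The remaining content is comparatively formal, with Theorem~\ref{theorem:Hom_reductive_affine} and the smoothness of $\Aut(G_0)$ supplying all the algebro-geometric input needed for~(1).
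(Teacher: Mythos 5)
Your reduction of $\Sub_{\type}(H)$ to $\Mono(G_0,H)/\Aut(G_0)$ is the same starting point as the paper, but the execution of part~(2) has a fatal gap: for $\type=[(M,M^*,\varnothing,\varnothing)]$ with $M$ free of rank $r\ge 2$, the group $\Aut(D(M))=\underline{\Aut(M)}\simeq\underline{\GL_r(\ZZ)}$ is an \emph{infinite} constant group scheme, not finite \'etale. So Serre's criterion (quotients of schemes by \emph{finite} groups acting with orbits in affine opens) simply does not apply, orbits are infinite, and "intersect finitely many $\Gamma$-translates" is meaningless; your whole construction of the scheme quotient $Y=X/\Gamma$ collapses. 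The same error infects part~(1): $\Aut(G_0)$ is smooth and separated (SGA3, Exp.~XXIV.1.3) but in general neither affine nor an extension of a \emph{finite} \'etale group by $G_0^{\mathrm{ad}}$ (tori again give $\GL_r(\ZZ)$); Artin's theorem still yields an algebraic space because the group is \'etale, hence flat and locally of finite presentation, but separatedness of the quotient is not inherited automatically from separatedness of $X$ -- it needs the fact that monomorphisms $G_0\to H$ are closed immersions, which is how the paper argues. Finally, the descent of the "quasi-compact closed $\Rightarrow$ affine" clause, which you yourself flag as the main obstacle, is not established: $\widetilde U\to Z$ is only an immersion (not open), and the affineness of $Z\setminus(Z\cap\overline{X^c})$ and its compatibility with the (infinite) group action are not proved.

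The paper takes a different route for~(2) that avoids quotient constructions entirely: having already shown in~(1) that $\Sub_{M}(H)$ is an algebraic space separated and locally of finite presentation, it maps a multiplicative subgroup $G\subset H$ to the compatible family of its torsion levels $G_n=\ker(n:G\to G)$, obtaining a morphism $u:\Sub_M(H)\to L:=\lim_n\Sub_{M/nM}(H)$ into an \emph{affine} scheme (each finite level is affine by Lemma~\ref{lemma:Hom_proper_separated}); the Density Theorem makes $u$ a monomorphism, hence separated and locally quasi-finite, and such morphisms are representable by schemes (\spref{0418}), so $\Sub_M(H)$ is a scheme. The quasi-affine/affine statement then follows by the schematic-image argument of Theorem~\ref{theorem:Hom_reductive_affine}(ii), using descent along schematically dominant morphisms, applied to the monomorphism into $L$ -- precisely the mechanism your sketch lacks because $X=\Mono(D(M),H)$ is only open, not closed, in $\Hom(D(M),H)$. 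If you want to salvage your approach, replace the quotient-by-$\Gamma$ step with this monomorphism-to-$L$ argument.
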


\begin{proof}
(1) The disjoint sum decomposition reflects the fact that
the type of a reductive group is locally constant
on the base (\cite{SGA3.2}, XXII.2.8). Thus it is enough to
establish that $\Sub_{\type}(H)$ is representable.

Let $G(\type)$ be the split reductive group scheme of type
$\type$ as in \cite{SGA3.3}, XXV.1.1.
By Theorem~\ref{theorem:Hom_reductive_affine}, the functor
$\Hom(G(\type),H)$ is representable by a scheme. It follows from
Lemma~\ref{lemma:Mono_reductive_separated}, item~(1) that the
subfunctor
\[
\Mono(G(\type),H)\subset\Hom(G(\type),H)
\]
of monomorphisms of group schemes is an open subscheme.
Moreover, since $G(\type)$ is reductive and $H$ is finitely
presented and separated, by \cite{SGA3.2}, XVI.1.5.a) any
monomorphism $f:G(\type)\to H$ is a closed immersion, inducing
an isomorphism between $G(\type)$ and a closed subgroup scheme
$K\into H$. By taking a monomorphism $f$ to its image $K$,
we obtain a morphism of functors:
\[
\pi:\Mono(G(\type),H)\too \Sub_{\type}(H).
\]
Let $A=\Aut(G(\type))$ be the functor of automorphisms
of $G(\type)$; this is a smooth, separated $S$-group scheme
by \cite{SGA3.3}, XXIV.1.3. It acts freely on
$\Mono(G(\type),H)$ by the rule $af=f\circ a^{-1}$ for
$a\in\Aut(G(\type))$ and $f\in\Mono(G(\type),H)$.
Let $\Mono(G(\type),H)/A$ be the quotient sheaf.
Since the morphism $\pi$  is $A$-equivariant, it induces
a morphism
\[
i:\Mono(G(\type),H)/A\too \Sub_{\type}(H).
\]
We claim that $i$ is an isomorphism. It is enough to
prove that it is an isomorphism of fppf sheaves:
\begin{itemize}
\item surjectivity: let $K\subset H$ be a reductive
subgroup scheme. Around each point $s\in S$, after \'etale
localization there is a maximal torus $T$ (\cite{SGA3.3},
XIX.2.5) and after further Zariski localization there is a
root system for $T$ providing a splitting for $K$
(\cite{SGA3.3}, XXII.2.1). So we can assume that $K$
is split and isomorphic to $G(\type)$.
We obtain a monomorphism $G(\type)\simeq K\subset H$ which
provides a point of $\Mono(G(\type),H)$ lifting $K$.
\item injectivity: if $f_i:G(\type)\to H$ are two
monomorphisms with the same image $K$, then
$f_2^{-1}\circ f_1:G(\type)\to K\to G(\type)$ is an
automorphism of $G(\type)$.
\end{itemize}
Now by Artin's Theorem (see \cite{SP22},
Tag~\href{https://stacks.math.columbia.edu/tag/04S5}{04S5})
the quotient $\Mono(G(\type),H)/A$ is an algebraic space
locally of finite presentation, hence so is $\Sub_{\type}(H)$.
Moreover, using that monomorphisms $G(\type)\to H$ are closed
immersions we see that $\Sub_{\type}(H)$ is separated. This
concludes the proof of (1).

\medskip

\noindent (2) Let us write $\Sub_M(H)\defeq\Sub_{\type}(H)$
when $\type=[(M,M^*,\varnothing,\varnothing)]$.
To prove that $\Sub_M(H)$ is representable by a scheme,
let $L:=\lim \Sub_{M/nM}(H)$ be the limit of the functors
of finite flat multiplicative type subgroups of type $M/nM$.
Since $\Sub_{M/nM}(H)$ is representable and affine
(Lemma~\ref{lemma:Hom_proper_separated}), the functor $L$
is an affine scheme. By mapping any subgroup $G\into H$ to
the collection of subgroups $G_n\into H$
where $G_n=\ker(n:G\to G)$, we define a morphism of functors
$u:\Sub_M(H)\to L$. By the Density Theorem, this is a
monomorphism. As $\Sub_M(H)\to S$ is locally of finite type,
so is $u$. In particular $u$ is a separated, locally
quasi-finite morphism. By \cite{SP22},
Tag~\href{https://stacks.math.columbia.edu/tag/0418}{0418}
all such morphisms are representable by schemes, hence
$\Sub_M(H)$ is a scheme. Finally, in order to prove that
each subscheme (resp. closed subscheme) which is quasi-compact
over $S$ is quasi-affine (resp. affine) over $S$, we proceed
as in the
proof of~\ref{theorem:Hom_reductive_affine}.
\end{proof}

\subsection{Homomorphisms from a group with reductive and
proper composition factors}

\begin{lemma} \label{lemma:extending_from_subgroup}
Let $1\to N\to G\to Q\to 1$ be an exact sequence of flat,
finitely presented $S$-group schemes with $N\to S$ pure.
Let $H$ be an affine, finitely presented $S$-group scheme
and $f_0:N\to H$ a morphism of group schemes. Assume that
\begin{trivlist}
\itemm{i} $Q$ is reductive, or
\itemm{ii} $Q$ is proper.
\end{trivlist}
Then the functor $\Hom^{f_0}(G,H)$ of morphisms of group
schemes $f:G\to H$ extending $f_0$ is representable by a
locally finitely presented, separated $S$-algebraic space.
\end{lemma}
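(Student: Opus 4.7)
The plan is to reduce the extension problem from $G$ to the quotient $Q$, for which the $\Hom$ functor is representable by Theorem~\ref{theorem:Hom_reductive_affine} in case~(i) and by Lemma~\ref{lemma:Hom_proper_separated} in case~(ii). The bridge is a twisting construction using $f_0$ and the $N$-torsor structure $G\to Q$.

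First I would form the contracted product $E:=G\times^{N}H$, defined as the quotient of $G\times H$ by the free right $N$-action $(g,h)\cdot n=(gn,hf_0(n))$. Since $N\to S$ is flat, finitely presented and pure, this quotient exists as an algebraic space (by the results on Weil restriction), and the natural projection $E\to Q$ is a left $H$-torsor that becomes trivial after pullback along $G\to Q$. Next I would establish the following dictionary: sections $\sigma:Q\to E$ of $E\to Q$ correspond bijectively to $S$-scheme morphisms $f:G\to H$ satisfying the equivariance $f(gn)=f(g)f_0(n)$, and group homomorphisms $f$ extending $f_0$ correspond to those sections satisfying an additional \emph{multiplicativity} constraint, expressible as the equalizer of two natural morphisms on $Q\times_S Q$. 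Hence $\Hom^{f_0}(G,H)$ sits as a closed subfunctor of the sheaf of sections $\Sec_S(E/Q)$.

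The two cases are then handled differently. In case~(ii), $Q\to S$ is proper and $E\to Q$ is affine (being an $H$-torsor with $H$ affine), so $\Sec_S(E/Q)$ is representable by the Weil restriction $R_{Q/S}(E)$, which is an algebraic space separated and locally of finite presentation (parallel to the argument in Lemma~\ref{lemma:Hom_proper_separated}); the multiplicativity subfunctor is closed and inherits these properties. In case~(i), the full section space $\Sec_S(E/Q)$ is not directly representable, but the multiplicativity condition is very restrictive: working fppf-locally on $S$ so as to trivialize the $N$-torsor $G\to Q$, multiplicative sections become group homomorphisms $\varphi:Q\to H$ subject to a closed compatibility relating $\varphi$, $f_0$ and the conjugation action of $Q$ on $N$ (together with the $2$-cocycle governing the extension). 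This identifies $\Hom^{f_0}(G,H)$ locally with a closed subfunctor of $\Hom(Q,H)$, which is representable by Theorem~\ref{theorem:Hom_reductive_affine}, and fpqc descent yields global representability.

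The main obstacle I anticipate is the bookkeeping in case~(i), since the extension $1\to N\to G\to Q\to 1$ need not split fppf-locally on $S$, so the cocycle of the twist must be handled intrinsically via $E$ rather than via a chosen splitting. A cleaner conceptual route passes to classifying stacks: $\Hom^{f_0}(G,H)$ can be identified with a $Z_H(f_0)$-torsor over the $2$-fibre at $Bf_0$ of the restriction of mapping stacks $\underline{\Hom}_S(BG,BH)\to \underline{\Hom}_S(BN,BH)$, which reduces the question to representability of Hom-stacks between classifying stacks but requires more setup than the direct approach above.
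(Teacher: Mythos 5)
Your setup is sound and in fact coincides with the paper's: your contracted product $E=(G\times H)/N$ for the action $(g,h)\cdot n=(gn,hf_0(n))$ is exactly the quotient of $G\times H$ by the graph $\Gamma_0$ of $f_0$, the observation that $E\to Q$ trivializes after the pullback $G\to Q$ (hence is affine over $Q$) is the paper's Step~1, and the dictionary between homomorphisms extending $f_0$ and ``multiplicative'' sections of $E\to Q$ is the paper's graph correspondence. Case~(ii) also goes through essentially as you say, parallel to Lemma~\ref{lemma:Hom_proper_separated} (mapping spaces with proper flat finitely presented source), with the multiplicativity locus cut out by Corollary~\ref{coro:representability_equalizer} applied over $G\times_S G$, which is flat, pure and finitely presented.

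The genuine gap is in case~(i). You propose to ``work fppf-locally on $S$ so as to trivialize the $N$-torsor $G\to Q$'', but this torsor lives over $Q$, not over $S$: no fppf base change on $S$ produces a section of $G\to Q$, and the extension need not split even scheme-theoretically after such a base change (you half-acknowledge this, but the acknowledgement is precisely the missing proof). Moreover, even granting a splitting $s:Q\to G$, a homomorphism $f:G\to H$ extending $f_0$ yields only the cocycle-twisted map $f\circ s:Q\to H$, which is \emph{not} a group homomorphism unless the extension splits as groups; so the claimed identification of $\Hom^{f_0}(G,H)$ with a closed subfunctor of $\Hom(Q,H)$ fails, and Theorem~\ref{theorem:Hom_reductive_affine} cannot be invoked for the twisted-homomorphism functor, which is essentially the original problem in disguise. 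The paper circumvents this intrinsically: since $N$ is normal, the graph of any extension $f$ lies in $\Norm(\Gamma_0)$, which is a closed finitely presented subgroup by Corollary~\ref{coro:representability_normalizer} (this is where purity of $N$ is used); one then classifies, inside the $S$-affine group $\Norm(\Gamma_0)/\Gamma_0$ (affine over $Q$ by Step~1, and $Q$ is affine over $S$ in case~(i)), the \emph{reductive subgroups} via Theorem~\ref{theorem:subgroups_reductive_affine}, and cuts out those mapping isomorphically to $Q$ by Lemma~\ref{lemma:Isom_reductive}. In other words, the key idea you are missing is to parametrize the graphs as subgroups of an affine group scheme rather than as (twisted) homomorphisms out of $Q$; without it, or some substitute for handling the non-split cocycle, your case~(i) does not close.
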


\begin{proof}
Let $\Gamma_0\subset N\times H\subset G\times H$ be the graph
of $f_0$. Since $\Gamma_0\simeq N$ is pure, by
Corollary~\ref{coro:representability_normalizer}
its normalizer $\Norm(\Gamma_0)\subset G\times H$ is a
closed, finitely presented subgroup scheme of
$G\times H$. Let
$\pi:G\times H\to G$ be the projection and
$\pi':\Norm(\Gamma_0)/\Gamma_0\to G/N=Q$ the morphism
it induces.

\medskip

\noindent {\em Step 1: the map
$\pi':\Norm(\Gamma_0)/\Gamma_0\to Q$ is affine.}
Since the closed immersion $\Norm(\Gamma_0)\into G\times H$
induces a closed immersion of algebraic spaces
$\Norm(\Gamma_0)/\Gamma_0\into (G\times H)/\Gamma_0$ (beware
that the target does not a priori carry a group structure),
it is enough to prove that $(G\times H)/\Gamma_0\to Q$
is affine. It is enough to check this after the fppf base
change $G\to Q$. For this, we consider the morphism:
\[
\begin{tikzcd}[column sep=25, row sep = 0,
    ,/tikz/column 1/.append style={anchor=base east}
    ,/tikz/column 2/.append style={anchor=base west}
    ]
(G\times H)\times_Q G \ar[r,"\alpha"] & H\times G \\
((g_1,h),g_2) \ar[r,mapsto] & (f_0(g_2g_1^{-1})h,g_2)
\end{tikzcd}
\]
(note that $g_2g_1^{-1}$ is a section of $N$).
This is invariant by the action of $\Gamma_0$ on
$(G\times H)\times_Q G$ by translation on the first factor.
By commutation of the quotient $G\times H\to (G\times H)/\Gamma_0$
with the flat base change $G\to Q$, from $\alpha$ we deduce
a morphism
\[
((G\times H)/\Gamma_0)\times_Q G\stackrel{\beta}{\tooo} H\times G.
\]
It is easy to see that the map $(h,g)\mapsto (g,h,g)$ provides
an inverse to $\beta$ which therefore is an isomorphism.
Since the right-hand side is affine over $G$, this concludes
Step~1.

\medskip

\noindent {\em Step 2: conclusion.}
Attaching to a morphism $f:G\to H$ its graph $\Gamma$ yields
a correspondence between the functor $\Hom^{f_0}(G,H)$
on one side, and the functor of subgroups
$\Gamma\subset G\times H$ containing $\Gamma_0$ and such that
$\pi_{|\Gamma}:\Gamma\to G$ is an isomorphism, on the other
side. Note that $\Gamma\subset \Norm(\Gamma_0)$ because
$N$ is normal in~$G$; hence the latter functor is in
correspondence with the functor of subgroups~$\Gamma'$ of
$\Norm(\Gamma_0)/\Gamma_0$ such that
$\pi'_{|\Gamma'}:\Gamma'\to Q$ is an isomorphism.
It remains to prove that the latter is representable.

In case (i), by Step~1 the map
$\Norm(\Gamma_0)/\Gamma_0\to Q\to S$ is affine. By 
Theorem~\ref{theorem:subgroups_reductive_affine} the
functor of reductive subgroups of $\Norm(\Gamma_0)/\Gamma_0$
is representable. The subfunctor of those subgroups $\Gamma'$
for which $\pi'_{|\Gamma'}:\Gamma'\to Q$ is an isomorphism is
representable by a locally finitely presented subscheme by
Lemma~\ref{lemma:Isom_reductive}.

In case (ii) recall from
Lemma~\ref{lemma:functor_proper_subgroups} that the functor
of proper, flat, finitely presented subgroups of
$\Norm(\Gamma_0)/\Gamma_0$ is representable.
According to Olsson \cite{Ol06}, Lemma~5.2, the
subfunctor of those subgroups~$\Gamma'$ for which
$\pi'_{|\Gamma'}:\Gamma'\to Q$ is an isomorphism is
representable by an open subscheme.
\end{proof}

\begin{theorem} \label{theo:Hom_from_proper_reductive}
Assume that $G\to S$ has a finite composition series whose
factors are either reductive or proper, flat, finitely
presented, and that $H\to S$ is affine and of finite
presentation.
\begin{trivlist}
\itemn{1} $\Hom(G,H)$ is representable by an
$S$-algebraic space separated and locally of finite presentation.
\itemn{2} $\Mono(G,H)$ is representable by an open
subspace of $\Hom(G,H)$. Moreover, all monomorphisms
$G\to H$ are closed immersions.
\end{trivlist}
\end{theorem}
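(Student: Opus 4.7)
The plan is to proceed by induction on the length $n$ of a composition series
$1 = G_0 \triangleleft G_1 \triangleleft \cdots \triangleleft G_n = G$
whose successive quotients $G_i/G_{i-1}$ are each either reductive or proper, flat, and finitely presented. In the base case $n = 1$, part~(1) follows from Theorem~\ref{theorem:Hom_reductive_affine} when $G$ is reductive and from Lemma~\ref{lemma:Hom_proper_separated} when $G$ is proper, flat, finitely presented; part~(2) follows from Lemma~\ref{lemma:Mono_reductive_separated} combined with \cite{SGA3.2}, XVI.1.5.a) in the reductive case, and from the fact that a monomorphism from a proper $G$ into a separated $H$ is proper and hence a closed immersion in the proper case (with openness of $\Mono$ coming from the Hilbert-scheme description of $\Hom$).

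For the inductive step of part~(1), I set $N = G_{n-1}$ and $Q = G/N$, so that $Q$ is a single reductive or proper, flat, finitely presented factor while $N$ carries a composition series of length $n-1$. By the inductive hypothesis, $\Hom(N, H)$ is a separated, locally finitely presented $S$-algebraic space. The restriction morphism
\[
r : \Hom(G, H) \too \Hom(N, H), \qquad f \longmapsto f|_N,
\]
has, for each $T$-point $f_0 : N_T \to H_T$, fiber equal to $\Hom^{f_0}(G_T, H_T)$, which is representable by a separated, locally finitely presented $T$-algebraic space by Lemma~\ref{lemma:extending_from_subgroup}. Applicability of that lemma requires that $N$ be pure over $S$; this holds because reductive and proper flat finitely presented group schemes are pure, and purity is preserved under the relevant extensions. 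Hence $r$ is representable, separated, and locally of finite presentation, and these properties pass to $\Hom(G, H)$.

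For part~(2), I argue by a parallel induction along the composition series, establishing simultaneously the openness of $\Mono(G, H) \subset \Hom(G, H)$ and the closed-immersion assertion. Given a monomorphism $f : G \to H$, the restriction $f|_N$ is a monomorphism, hence a closed immersion by induction, with image a closed subgroup scheme $N' \subset H$. Since $N$ is normal in $G$, the image $f(G)$ normalizes $N'$, so $f$ factors through $\Norm_H(N')$; passing to the quotient algebraic space $\Norm_H(N')/N'$ yields an induced monomorphism $\bar f : Q \to \Norm_H(N')/N'$, which by the base case is a closed immersion, from which one deduces that $f$ itself is a closed immersion. Openness of $\Mono(G, H)$ then follows by inductive combination of the base-case openness statements with the fibration structure of $r$ constructed in part~(1), reducing to the Noetherian setting via local finite presentation and tracking triviality of the universal kernel $\mathcal{K} \subset G \times \Hom(G, H)$.

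The main obstacle I anticipate lies in part~(2), specifically in verifying that the quotient $\Norm_H(N')/N'$ satisfies the hypotheses (separatedness, finite presentation, and for invoking the reductive case possibly flatness and purity) required to apply the base-case closed-immersion results. A secondary technical matter is confirming the stability of purity under the group extensions occurring in the composition series of $N$, which is needed to legitimize each application of Lemma~\ref{lemma:extending_from_subgroup} during the induction.
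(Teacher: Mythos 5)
Your proposal is correct and follows essentially the same route as the paper: induction on the length of the composition series, with part~(1) reduced via the restriction map $\Hom(G,H)\to\Hom(N,H)$ to Lemma~\ref{lemma:extending_from_subgroup} (purity of $N$ supplied by Corollaries~\ref{coro:connected_fibres_is_pure} and~\ref{coro:extension_is_pure}), and part~(2) handled through the normalizer of the image $N'=f(N)$ and the base-case statements for reductive and proper factors. The only place where your sketch stays vague (``tracking triviality of the universal kernel'') is made precise in the paper by applying exactly your normalizer device to the universal morphism over $T=\Hom(G,H)\times_{\Hom(N,H)}\Mono(N,H)$: one checks that $\Mono(G,H)$ is the locus of $T$ where the induced map $f':Q\to\Norm(N')/N'$ is a monomorphism, which is open by Lemma~\ref{lemma:Mono_reductive_separated} in the reductive case and Lemma~\ref{lemma:Hom_proper_separated} in the proper case, so the construction you already set up for the closed-immersion claim also settles openness.
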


\begin{proof}
According to Corollaries~\ref{coro:connected_fibres_is_pure}
and~\ref{coro:extension_is_pure}, all group schemes having
a finite composition series as indicated are pure.
We prove (1) and (2) for group schemes admitting a composition
series of length~$n$, by induction on $n$. If $n=0$ we have
$G=1$ and both statements are clear, so we now assume
that~$G$ admits a composition series of length $n\ge 1$.
Thus there is an exact sequence $1\to N\to G\to Q\to 1$
where $Q$ is reductive or proper and $N$ admits a composition
series of length $n-1$.

\smallskip

\noindent (1) By induction the functor $\Hom(N,H)$
is representable by an $S$-algebraic space separated and
locally of finite presentation, so it is enough to prove that
the restriction homomorphism $\Hom(G,H)\to \Hom(N,H)$ is
representable, separated and locally of finite presentation.
This is exactly what
Lemma~\ref{lemma:extending_from_subgroup} says.

\smallskip

\noindent (2) By induction we have an open immersion
$\Mono(N,H)\subset\Hom(N,H)$ and it is enough to prove
that $\Mono(G,H)$ is open in the space
$T=\Hom(G,H)\times_{\Hom(N,H)}\Mono(N,H)$ of morphisms
whose restriction to $N$ is monomorphic. Let $f:G\to H$ be
the universal morphism over $T$ and let $N'=f(N)\simeq N$.
The normalizer $\Norm(N')\subset H$ is a closed, finitely
presented subgroup scheme of $H$ thanks to
Corollary~\ref{coro:representability_normalizer}. Moreover
$f$ maps into $\Norm(N')$ and induces a morphism
$f':Q=G/N\to \Norm(N')/N'$. Now $\Mono(G,H)$ is the subfunctor
of $T$ where $f'$ is a monomorphism, which is an open
subscheme by Lemma~\ref{lemma:Mono_reductive_separated}
when $Q$ is reductive and by
Lemmma~\ref{lemma:Hom_proper_separated} when $Q$ is proper.

Finally the fact that all monomorphisms $G\to H$ are
closed immersions follows directly from the same statements
for reductive and proper groups. For reductive groups this
is \cite{SGA3.2}, Exp.~XVI, Cor.~1.5.a) and for proper groups
this is because proper monomorphisms are closed immersions.
\end{proof}

\section{Algebraicity and smoothness of fixed points stacks}
\label{section:algebraicity-of-fixed-pts}

Let $\sX\to S$ be an algebraic stack and $G\to S$ a
group algebraic space acting on it. 

\subsection{Algebraicity}

The stack of fixed points $\sX^G$ has for sections over a
scheme $T\to S$ the pairs $(x,\{\alpha_g\}_{g\in G(T)})$
composed of an object $x\in \sX(T)$ and a collection of
isomorphisms $\alpha_g:gx\to x$ satisfying the cocycle
condition $\alpha_{gh}=\alpha_g\circ g\alpha_h$ (see
\cite{Ro05}, Prop.~2.5), pictured by a commutative triangle:
\[
\begin{tikzcd}[column sep={4em,between origins}]
& gx
\ar[rd,"\alpha_g",start anchor={[xshift=-.5ex]},end anchor={[xshift=-1ex]}] & \\
(gh)x \ar[ru,"g\alpha_h"] \ar[rr,"\alpha_{gh}",swap] & & x.
\end{tikzcd}
\]
An interesting viewpoint on $\sX^G$ is that it can be expressed
as a certain Weil restriction of the {\em universal stabilizer}
$\St_{\sX\!,G}$ of the action of $G$ on $\sX$. The latter is
the 2-fibred product:
\[
\begin{tikzcd}[column sep=2cm]
\St_{\sX\!,G} \ar[r] \ar[d] & \sX \ar[d,"\Delta"] \\
G\times\sX \ar[r,"\text{act}\times\pr_2"] & \sX\times\sX.
\end{tikzcd}
\]
In particular $\St_{\sX\!,G}\to\sX$ is representable by
algebraic spaces. The top horizontal map of the diagram makes
$\St_{\sX\!,G}$ an $\sX$-group functor:  for each
$x:T\to \sX$ we have:
\[
\St_{\sX\!,G}(T)=\big\{(g,\alpha); \ g\in G(T) \mbox{ and } 
\alpha:gx\isomto x \mbox{ an isomorphism}\big\},
\]
with law of multiplication
$(g,\alpha) \cdot (h,\beta) \defeq (gh,\alpha\circ g\beta)$
and neutral element $(g,\alpha)=(1,\id_x)$. The left vertical
map is the map $\St_{\sX,G}\to G_\sX$, $(g,\alpha)\mapsto g$.
It is a morphism of $\sX$-group spaces with kernel equal to
the inertia stack $I_\sX\defeq \sX\times_{\sX\times \sX}\sX$,
whence an exact sequence of $\sX$-group functors:
\[
1 \too I_\sX \too \St_{\sX\!,G} \too G_\sX.
\]
If the diagonal $\sX\to \sX\times\sX$ is affine and finitely
presented, then so is $\St_{\sX\!,G}\to G_\sX$.


\begin{definition}
In general, if $\Sigma\to G$ is a morphism of $S$-group spaces we
write
\[
(\Resgr_{G/S}\Sigma)(T)=\big\{
\mbox{group-theoretic sections of $\Sigma_T\to G_T$}\big\}.
\]
We call $\Resgr_{G/S}\Sigma$ the {\em group-theoretic Weil
restriction of $\Sigma$ along $G\to S$}.
\end{definition}

\begin{lemma}
We have an $\sX$-isomorphism
$\sX^G\isomto \Resgr_{G_\sX\!/\sX}\St_{\sX\!,G}$.
\end{lemma}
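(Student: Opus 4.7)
The plan is to establish the isomorphism by unravelling both sides as fibred categories over $\sX$, and then matching their $T$-sections tautologically. Both $\sX^G$ and $\Resgr_{G_\sX/\sX}\St_{\sX\!,G}$ come equipped with a natural forgetful map to $\sX$ (the first by $(x,\{\alpha_g\})\mapsto x$, the second by remembering the structural map $T\to\sX$ underlying an $\sX$-scheme), so it suffices to show that the fibres over any $x:T\to\sX$ are canonically equivalent.

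First I would unwind the right-hand side. A section of $\Resgr_{G_\sX/\sX}\St_{\sX\!,G}$ over $x:T\to\sX$ is by definition a group-theoretic section $\sigma:G_T\to\St_{\sX\!,G}\times_\sX T$ of the projection onto $G_T=G_\sX\times_\sX T$. Using the explicit description of $\St_{\sX\!,G}$ recalled in the text, a $T'$-point of $\St_{\sX\!,G}\times_\sX T$ over $(g,x|_{T'})$ is precisely an isomorphism $\alpha:gx|_{T'}\isomto x|_{T'}$. Hence $\sigma$ amounts to assigning, functorially in $T'\to T$ and $g\in G(T')$, an isomorphism $\alpha_g:gx|_{T'}\isomto x|_{T'}$.

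Next I would translate the group-section condition $\sigma(gh)=\sigma(g)\cdot\sigma(h)$ using the multiplication law recalled in the text,
\[
(g,\alpha)\cdot(h,\beta)=(gh,\alpha\circ g\beta),\qquad(1,\id_x)=e.
\]
This is exactly the cocycle condition $\alpha_{gh}=\alpha_g\circ g\alpha_h$ (and $\alpha_1=\id_x$) that appears in the definition of the fixed point stack recalled from \cite{Ro05}, Prop.~2.5. Thus, over each $x:T\to\sX$, the families $\{\alpha_g\}_{g\in G(T')}$ parametrizing $\sX^G(T)_{x}$ correspond bijectively to the group-theoretic sections parametrizing $(\Resgr_{G_\sX/\sX}\St_{\sX\!,G})(T)_x$.

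Finally I would verify that this pointwise bijection is compatible with pullback along morphisms $T\to T$ of $\sX$-schemes, and with the 2-morphisms on both sides (which on $\sX^G$ are isomorphisms $x\isomto x'$ intertwining the $\alpha_g$'s, and on the Weil restriction are the induced identifications of sections). This amounts to a routine check since both structures are inherited from the single category $\St_{\sX\!,G}$. There is no genuine obstacle here: the statement is purely formal, and the only care required is keeping track of the 2-categorical bookkeeping so that the two fibred categories over $\sX$ are genuinely equivalent, not merely bijective on isomorphism classes of objects.
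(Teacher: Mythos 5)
Your proposal is correct and follows essentially the same route as the paper's proof, which likewise identifies a section of $\sX^G$ over $x:T\to\sX$ (a family $\{\alpha_g\}$ satisfying the cocycle condition) with a group-theoretic section of $\St_{\sX\!,G}\times_{\sX}T\to G_\sX\times_{\sX}T$ via the multiplication law of the universal stabilizer. Your extra remarks on functoriality and the 2-categorical bookkeeping are just a more explicit spelling-out of what the paper leaves implicit.
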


\begin{proof}
A section of $\sX^G$ over $x:T\to \sX$ is a collection
$(\{\alpha_g\}_{g\in G(T)})$ satisfying the cocycle
condition $\alpha_{gh}=\alpha_g\circ g\alpha_h$. This is exactly
a group-theoretic section of
$\St_{\sX\!,G}\times_{\sX}T\to G_\sX\times_{\sX}T$.
\end{proof}

For the proof of Theorem~\ref{th-fixed-pt-stack} below we
need a variant of Lemma~\ref{lemma:extending_from_subgroup}.

\begin{lemma} \label{lemma:extending_sections}
Let $1\to N\to G\to Q\to 1$ be an exact sequence of flat,
finitely presented $S$-group schemes with $N\to S$ pure.
Let $E$ be an $S$-group scheme and $\pi:E\to G$ a morphism of
$S$-group schemes which is affine. Let $f_0:N\to E$ be a
morphism of group schemes which is a section of
$\pi_{|N}:E\times_GN\to N$. Assume that
\begin{trivlist}
\itemm{i} $Q$ is reductive, or
\itemm{ii} $Q$ is proper.
\end{trivlist}
Then the functor $\Resgr_{G/S}^{f_0}(E)$ of group-theoretic
sections of $\pi$ extending $f_0$ is representable by a
locally finitely presented, separated $S$-algebraic space.
\end{lemma}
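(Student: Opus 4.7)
My plan is to mirror the proof of Lemma~\ref{lemma:extending_from_subgroup} — in fact this statement strictly contains that one, recovered by taking $E=G\times H$ with $\pi=\pr_1$, so that a group-theoretic section of $\pi$ is precisely a homomorphism $f:G\to H$. First I would set $\Gamma_0:=f_0(N)\subset E$. Because $f_0$ is a section of the separated (indeed, affine) map $\pi|_{\pi^{-1}(N)}:E\times_GN\to N$, $\Gamma_0$ is a closed subgroup scheme of $E$ isomorphic to $N$. Group-theoretic sections $f:G\to E$ of $\pi$ extending $f_0$ correspond bijectively to closed subgroup schemes $\Gamma\subset E$ containing $\Gamma_0$ and mapping isomorphically to $G$ via $\pi$. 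Since $N$ is normal in $G$, any such $\Gamma$ normalizes $\Gamma_0$, hence sits inside $\Norm(\Gamma_0)\subset E$. Passing to the quotient, $\Gamma$ corresponds to a subgroup $\Gamma'=\Gamma/\Gamma_0\subset\Norm(\Gamma_0)/\Gamma_0$ on which the induced morphism $\pi':\Norm(\Gamma_0)/\Gamma_0\to G/N=Q$ restricts to an isomorphism.

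The key technical step is to establish that $\pi'$ is affine. Since $\Gamma_0\cong N$ is pure, Corollary~\ref{coro:representability_normalizer} gives that $\Norm(\Gamma_0)\subset E$ is a closed, finitely presented subgroup scheme, and the resulting closed immersion $\Norm(\Gamma_0)/\Gamma_0\hookrightarrow E/\Gamma_0$ reduces the affineness claim to showing that $E/\Gamma_0\to Q$ is affine, as in Step~1 of the proof of Lemma~\ref{lemma:extending_from_subgroup}. This in turn can be checked after the fppf base change $G\to Q$. The $T$-points of $E\times_QG$ are pairs $(e,g)$ with $g\pi(e)^{-1}\in N(T)$, which yields an isomorphism $E\times_QG\isomto E\times N$ by $(e,g)\mapsto(e,g\pi(e)^{-1})$. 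Under this isomorphism the $\Gamma_0$-action on the first factor becomes $n'\cdot(e,n)=(f_0(n')e,\,nn'^{-1})$, while the second projection to $G$ becomes $(e,n)\mapsto n\pi(e)$. The $\Gamma_0$-invariant morphism $(e,n)\mapsto f_0(n)e$ then identifies $(E\times N)/\Gamma_0$ with $E$, intertwining the map to $G$ with $\pi$ itself. Since $\pi$ is affine by hypothesis, $(E/\Gamma_0)\times_QG\to G$ is affine, hence $\pi'$ is affine by fppf descent. I expect this computation to be the main obstacle, as it requires carefully tracking the twisted $N$-action through the fiber product.

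Once $\pi'$ is known to be affine, the argument concludes in direct analogy with Step~2 of the proof of Lemma~\ref{lemma:extending_from_subgroup}. In case~(i), $\Norm(\Gamma_0)/\Gamma_0\to S$ is affine of finite presentation (as $\pi'$ is affine and $Q\to S$ is affine), so Theorem~\ref{theorem:subgroups_reductive_affine} gives representability of $\Sub_{\red}(\Norm(\Gamma_0)/\Gamma_0)$; restricting to the type of $Q$ and imposing that $\pi'|_{\Gamma'}$ be an isomorphism then cuts out a locally finitely presented subspace by Lemma~\ref{lemma:Isom_reductive}. In case~(ii), Lemma~\ref{lemma:functor_proper_subgroups} yields representability of the functor of proper, flat, finitely presented subgroups of $\Norm(\Gamma_0)/\Gamma_0$, and the isomorphism condition defines an open subfunctor by \cite{Ol06}, Lemma~5.2. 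Separatedness over $S$ is inherited from the ambient subgroup functors in both cases.
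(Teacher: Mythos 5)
Your proposal is correct and follows essentially the same route as the paper: reduce to subgroups $\Gamma'\subset\Norm(\Gamma_0)/\Gamma_0$ mapping isomorphically to $Q$, prove affineness of $\pi'$ by the fppf base change $G\to Q$ and the trivialization $(E/\Gamma_0)\times_QG\simeq E$ (your map $(e,n)\mapsto f_0(n)e$ is exactly the paper's $\alpha(e,g)=f_0(g\pi(e)^{-1})e$ written through the intermediate identification $E\times_QG\simeq E\times N$), and conclude via Theorem~\ref{theorem:subgroups_reductive_affine} with Lemma~\ref{lemma:Isom_reductive} in case (i) and Lemma~\ref{lemma:functor_proper_subgroups} with Olsson's Lemma~5.2 in case (ii). No gaps.
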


\begin{proof}
The section $f_0$ induces an isomorphism between $N$ and
the image subgroup $\Sigma_0\defeq f_0(N)$. Let
$\Norm(\Sigma_0)\subset E$ be its normalizer. Since
$\Sigma_0\simeq N$ is pure, by
Corollary~\ref{coro:representability_normalizer} this is a
closed, finitely presented subgroup scheme of $E$.
Let $\pi':\Norm(\Sigma_0)/\Sigma_0\to G/N=Q$ be the morphism
induced by $\pi:E\to G$.

\medskip

\noindent {\em Step 1: the map
$\pi':\Norm(\Sigma_0)/\Sigma_0\to Q$ is affine.}
Since $\Norm(\Sigma_0)/\Sigma_0\into E/\Sigma_0$ is a closed
immersion, it is enough to prove that $E/\Sigma_0\to Q$ is
affine. In turn, it is enough to prove this after the flat
base change $G\to Q$. The morphism
\[
\alpha:E\times_QG\too E,\quad
(e,g) \longmapsto (f_0(g\pi(e)^{-1})e
\]
is invariant by the action of $\Sigma_0$ on $E\times_Q G$
by translation on the first factor, hence induces
\[
(E/\Sigma_0)\times_Q G\stackrel{\beta}{\tooo} E.
\]
The map $e\mapsto (e,\pi(e))$ is an inverse to $\beta$ which
therefore is an isomorphism. Since the right-hand side is
affine over $G$, our claim follows.

\medskip

\noindent {\em Step 2: conclusion.} We have an isomorphism
of functors between $\Resgr_{G/S}^{f_0}(E)$ and the functor
of subgroups $\Sigma\subset E$ containing $\Sigma_0$ such
that $\pi_{|\Sigma}$ is an isomorphism. That functor
is isomorphic to the functor of subgroups~$\Sigma'$
of $\Norm(\Sigma_0)/\Sigma_0$ such that
$\pi'_{|\Sigma'}:\Sigma'\to Q$ is an isomorphism.
It remains to prove that the latter is representable.

In case (i), by Step~1 the composition
$\Norm(\Sigma_0)/\Sigma_0\to Q\to S$ is affine. By 
Theorem~\ref{theorem:subgroups_reductive_affine} the
functor of reductive subgroups of $\Norm(\Sigma_0)/\Sigma_0$
is representable. The subfunctor of those subgroups $\Sigma'$
for which $\pi'_{|\Sigma'}$ is an isomorphism is
representable by a locally finitely presented subscheme by
Lemma~\ref{lemma:Isom_reductive}.

In case (ii) recall from 
Lemma~\ref{lemma:functor_proper_subgroups} that the functor
of proper, flat, finitely presented subgroups of
$\Norm(\Sigma_0)/\Sigma_0$ is representable.
According to Olsson \cite{Ol06}, Lemma~5.2, the
subfunctor of those subgroups~$\Sigma'$ for which
$\pi'_{|\Sigma'}$ is an isomorphism is
representable by an open subspace.
\end{proof}

\begin{theorem} \label{th-fixed-pt-stack}
Let $\sX\to S$ be an algebraic stack with affine, finitely
presented diagonal and let $G\to S$ be a group space acting
on $\sX$. Assume that $G\to S$ has a finite composition series
whose factors are either reductive or proper, flat, finitely
presented. Then the fixed point stack $\sX^G\to S$ is algebraic,
and the morphism $\sX^G\to \sX$ is representable by algebraic
spaces, separated and locally of finite presentation.
If $G\to S$ is reductive, this morphism is even representable
by schemes.
\end{theorem}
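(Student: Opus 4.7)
The plan is to combine the identification $\sX^G \cong \Resgr_{G_\sX/\sX}\St_{\sX\!,G}$ of the preceding lemma with an induction on the length of a composition series of $G$. Because the diagonal of $\sX$ is affine and finitely presented, the morphism $\pi\colon \St_{\sX\!,G} \to G_\sX$ is representable, affine, and finitely presented. Consequently, after pulling back along an arbitrary morphism $x\colon T \to \sX$ and renaming $T$ to $S$, it suffices to prove the following: for every affine, finitely presented morphism of $S$-group schemes $\pi\colon E \to G$, with $G$ satisfying the hypotheses of the theorem, the functor $\Resgr_{G/S}(E)$ of group-theoretic sections of $\pi$ is representable by a separated $S$-algebraic space locally of finite presentation, and even by a scheme when $G$ is reductive.

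The reductive case is immediate from Theorem~\ref{theorem:Hom_reductive_affine}: the functor $\Hom_S(G,E)$ is a separated $S$-scheme locally of finite presentation, and $\Resgr_{G/S}(E)$ is the closed subscheme it defined by the equation $\pi \circ f = \id_G$. This settles at once the last assertion of the theorem.

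For the general case I would argue by induction on the length $n$ of a composition series, the case $n=0$ being trivial. For the inductive step, write $1 \to N \to G \to Q \to 1$ with $Q$ either reductive or proper, flat, and finitely presented, and with $N$ admitting a composition series of length $n-1$ whose factors are of the same kind. By Corollaries~\ref{coro:connected_fibres_is_pure} and~\ref{coro:extension_is_pure}, $N \to S$ is pure. Restriction of sections along $N \into G$ yields a morphism
\[
\rho\colon \Resgr_{G/S}(E) \too \Resgr_{N/S}(E \times_G N)
\]
whose fibres over a point $f_0$ are exactly $\Resgr^{f_0}_{G/S}(E)$. The target is representable by a separated, locally finitely presented $S$-algebraic space by the inductive hypothesis applied to the affine, finitely presented morphism $E \times_G N \to N$, while the fibres of $\rho$ share these properties thanks to Lemma~\ref{lemma:extending_sections} applied to $E \to G$ (purity of $N$ being precisely what allows the normalizer $\Norm(f_0(N)) \subset E$ to exist as a closed, finitely presented subgroup scheme). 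Separatedness, local finite presentation, and representability by algebraic spaces being preserved under composition, $\Resgr_{G/S}(E)$ itself is a separated, locally finitely presented $S$-algebraic space.

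The conceptual heart of the argument is already packed into Lemma~\ref{lemma:extending_sections}: the hard work of representing extensions of a fixed section across the quotient $Q$ was carried out there by combining the normalizer trick with the representability of reductive (Theorem~\ref{theorem:subgroups_reductive_affine}) or proper (Lemma~\ref{lemma:functor_proper_subgroups}) subgroups. The principal obstacle that remains in the present theorem is thus organizational rather than technical — namely, confirming that purity of the kernel $N$ is preserved all along the composition series so that the lemma may be applied at each step — and this is exactly what the cited purity corollaries guarantee.
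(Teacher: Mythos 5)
Your proposal is correct and follows essentially the same route as the paper: identify $\sX^G$ with the group-theoretic Weil restriction of the universal stabilizer $\St_{\sX\!,G}$ (affine and finitely presented over $G_\sX$ because of the hypothesis on the diagonal), induct on the length of a composition series so that everything reduces to extending a section $f_0$ defined on the pure kernel $N$ across the top factor $Q$, and invoke Lemma~\ref{lemma:extending_sections} together with Corollaries~\ref{coro:connected_fibres_is_pure} and~\ref{coro:extension_is_pure}. Your explicit handling of the reductive case --- realizing $\Resgr_{G/S}(E)$ as the closed subscheme of the scheme $\Hom(G,E)$ of Theorem~\ref{theorem:Hom_reductive_affine} cut out by the condition $\pi\circ f=\id_G$ (Corollary~\ref{coro:representability_equalizer}) --- is a useful refinement, since the paper's proof leaves the ``representable by schemes'' assertion implicit.
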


\begin{proof}
According to Corollaries~\ref{coro:connected_fibres_is_pure}
and~\ref{coro:extension_is_pure}, all group schemes having
a finite composition series as indicated are pure. Therefore
the assumption implies that there is an exact sequence
\[
1\too N\too G\too Q\too 1
\]
of flat, finitely presented $S$-group schemes with $N\to S$
pure and $Q\to S$ either reductive or proper. By induction
on the number of factors in a composition series, it is enough
to prove that the map $\sX^G\to\sX^N$ is representable by
algebraic spaces separated and locally of finite presentation.
Let $\St\defeq \St_{\sX\!,G}$ be the universal stabilizer
of $G$ acting on $\sX$. Let $x:T\to\sX^N$ be a point from
an $S$-scheme; this corresponds to a group-theoretic section
$f_0:N\to \St$ of $\St\times_GN\to N$.
The functor $\sX^G\times_{\sX^N}T$ classifies the
group-theoretic sections of $\St\to G$ that extend $f_0$. By
Lemma~\ref{lemma:extending_sections}, this is representable
by an algebraic space enjoying the announced properties.
\end{proof}

Alper, Hall and Rydh proved this result in \cite{AHR20},
Theorem~5.16 when $\sX$ is a Deligne-Mumford stack locally
of finite type over a field, and $G=\GG_m$. They further
showed that in this situation $\sX^G\to\sX$ is a monomorphism;
this can be easily extended to the case where $\sX$ is a
Deligne-Mumford stack, $G$ is smooth with connected fibres,
and the base scheme $S$ is arbitrary. In the following
example, we show that the Deligne-Mumford assumption is
essential.

\begin{example} \label{example-B-alpha-p}
({\em An algebraic stack $\sX$ with finite
inertia with action of a torus $T$ such that
$\sX^T\to\sX$ is not a monomorphism}.)
Let $S$ be a scheme of characteristic $p>0$. Let
$\sX=B\alpha_p$ be the classifying stack of~$\alpha_p$.
Consider the exact sequence of commutative $S$-group schemes:
\[
0 \too \alpha_p \too \GG_a \stackrel{\Frob}{\too} \GG_a \too 0.
\]
The group $T:=\GG_m=\Aut(\GG_a)$ acts on the first and
second term naturally, and on the third term via Frobenius,
that is $\lambda\cdot x:=\lambda^px$. In this way the sequence
is an exact sequence of $T$-modules. There is an induced,
$T$-equivariant exact sequence of Picard categories:
\[
0 \too \alpha_p(S) \too \GG_a(S) \too \GG_a(S)
\too (B\alpha_p)(S) \too (B\GG_a)(S) \too (B\GG_a)(S)
\]
(see Giraud~\cite{Gi71}, Chap.~III, \S~3, Prop.~3.2.1).

We claim that $\sX^T\to\sX$ is not a monomorphism.
Indeed, if $S=\Spec(R)$ and $\lambda\in T(R)=R^\times$ then
any $\alpha_p$-torsor over $S$ is of the form
$P=\Spec(R[x]/(x^p-r))$ with $\alpha_p$-action given by
$a\cdot x=a+x$, for some $r\in R$. Moreover, the torsor
$\lambda\cdot P$ is $\Spec(R[x]/(x^p-\lambda^pr))$ with
action $a\cdot x=\lambda a+x$. Let us fix such a torsor
$P\to S$, that is, a map $S\to\sX$. The fibre product
$\sX^T\times_{\sX}S$ is the functor of $T$-linearizations
of $P$. This is identified with the functor of group-theoretic
sections of the extension
\begin{equation} \label{canonical-extension}
1 \too \alpha_p=\Aut(P) \too E \too \GG_m \too 1
\end{equation}
where $E=\{(\lambda,\varphi)\:;\:\lambda \in T
\mbox{ and } \varphi:\lambda\cdot P\to P
\mbox{ an isomorphism}\}$. One computes that all the
isomorphisms $\varphi:\lambda\cdot P\to P$ are described by
a map of algebras $R[x]/(x^p-r)\to R[x]/(x^p-\lambda^pr)$,
$x\mapsto u+\lambda^{-1}x$ for some $u\in\alpha_p(R)$.
Thus we see that $E$ is the group scheme whose points are pairs
$(\lambda,u)\in T\times\alpha_p$ with law of multiplication
\[
(\lambda,u)\cdot (\mu,v)=(\lambda\mu,u+\lambda^{-1}v).
\]
The sections of the extension (\ref{canonical-extension})
are the maps $\GG_m\to E$, $\lambda\mapsto (\lambda,u(\lambda))$
where $\lambda\mapsto u(\lambda)$ is a crossed homomorphism,
that is $u(\lambda\mu)=u(\lambda)+\lambda^{-1}u(\mu)$. Those
are all of the form $u(\lambda)=s(1-\lambda^{-1})$ for some
$s\in\alpha_p(R)$. In conclusion the functor of
$T$-linearizations of $P$ is isomorphic to $\alpha_p$ and
is not trivial, proving that $\sX^T\to\sX$ is not a monomorphism.
For more material on extensions with quotient of
multiplicative type, we refer to Demazure and
Gabriel~\cite{DG70}, Chap.~III, \S~6, n°~6.
\end{example}

\subsection{Geometric interpretation of group cohomology
in degrees 1 and 2} \label{G-torsors-and-G-gerbes}

In this section we consider a sheaf of groups $G$ and a sheaf
of $G$-modules $A$ over $S$ (that is, an abelian sheaf
endowed with an additive action of $G$) and we give the
interpretation of $H^1(G,A)$ and $H^2(G,A)$ in terms of
equivariant torsors and gerbes. Since this basically amounts
to reviewing the classical correspondences between geometric
objects and cohomological classes and proving that they are
$G$-equivariant, we sometimes omit some details.

We denote by $A^\circ$ the underlying abelian sheaf of $A$,
devoid of $G$-action.

\begin{definition}
An {\em $A$-torsor} is an $A^\circ$-torsor $P$ endowed with
an action of $G$ such that the
action morphism $A^\circ\times P\to P$ is $G$-equivariant.
\end{definition}

\begin{lemma} \label{prop-torsors-and-H1}
There is a canonical bijection between the set of isomorphism
classes of $A$-torsors over~$S$ and the cohomology group
$H^1(G,A)$.
\end{lemma}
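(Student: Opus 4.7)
The plan is to realize $A$-torsors as torsors under the abelian group object $A$ in the topos of $G$-equivariant sheaves on $S$; under this identification, the statement reduces to the standard bijection between $H^1$ of a topos with coefficients in an abelian group object and isomorphism classes of torsors under it, for which one can cite \cite{Gi71}. The core of the proof is therefore to make the sheafified cocycle description explicit and identify it with the standard group-cohomological $H^1(G, A)$ computed, say, by the bar resolution or by derived functors of invariants on the category of $G$-equivariant abelian sheaves.

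In one direction, given an $A$-torsor $P$, one chooses a cover $\{U_i \to S\}$ trivializing the underlying $A^\circ$-torsor, picks sections $s_i \in P(U_i)$, and sets $c_i(g) := g \cdot s_i - s_i \in A(U_i)$ for $g \in G(U_i)$, using the torsor structure. A direct computation with the identity $g \cdot (a + s) = (g \cdot a) + (g \cdot s)$, which encodes $G$-equivariance of the $A^\circ$-action on $P$, shows that each $c_i$ is a crossed homomorphism, i.e.\ $c_i(gh) = c_i(g) + g \cdot c_i(h)$. On overlaps, $s_j - s_i \in A(U_{ij})$ realises a coboundary relating $c_i$ and $c_j$, so the local data glue to a class in $H^1(G, A)$ that is independent of the choices of $U_i$ and $s_i$ and depends only on the isomorphism class of $P$.

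In the opposite direction, given a cocycle $c \colon G \to A$, the twisted action $g *_c a := g \cdot a + c(g)$ on $A^\circ$ is an action precisely because of the cocycle identity, and the translation action of $A^\circ$ on itself is $*_c$-equivariant by the computation $g *_c (a + b) = g \cdot a + g \cdot b + c(g) = (g \cdot a) + (g *_c b)$; this defines an $A$-torsor $P_c$. Two cohomologous cocycles, differing by a principal crossed homomorphism $g \mapsto g \cdot a_0 - a_0$, produce isomorphic $A$-torsors via translation by $a_0$, so the construction descends to $H^1(G, A)$. Unwinding the definitions shows that the two assignments are mutually inverse.

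The only subtlety I expect is bookkeeping around local-to-global gluing in the sheaf setting, and the verification that the cocycles produced geometrically agree with whichever concrete model of $H^1(G, A)$ is fixed. This is routine once a resolution is chosen; a cleaner alternative is to bypass explicit cocycles altogether and invoke Giraud's correspondence directly, after checking that the $A$-torsors as defined in the text are exactly the torsors, in the topos-theoretic sense, under the abelian group object $A$ in the topos of $G$-equivariant sheaves on $S$.
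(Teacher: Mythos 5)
Your two explicit maps do not match up as written, and this is a genuine gap rather than bookkeeping. The forward map is built from purely local data: after trivializing the underlying $A^\circ$-torsor on a cover you obtain a cocycle in a mixed \v{C}ech--group complex (local crossed homomorphisms $c_i$ together with the transition sections $a_{ij}=s_j-s_i$). The backward map, however, starts from a single \emph{global} crossed homomorphism $c\colon G\to A$ and produces the twisted torsor $P_c$, whose underlying $A^\circ$-torsor is trivial. So the backward construction is only defined on classes represented by global cocycles and only reaches $A$-torsors with trivial underlying torsor; it cannot be inverse to the forward map on all of $H^1(G,A)$. (Here $H^1(G,A)$ must be the derived-functor cohomology, e.g. $\Ext^1_G(\ZZ,A)$ in $G$-equivariant abelian sheaves, which is how the paper uses it; with naive global-cochain group cohomology the lemma would already fail for $G=1$, where $A$-torsors are $A^\circ$-torsors classified by $H^1(S,A^\circ)$.) To repair the explicit route you must start from the full mixed cocycle $(c_i,a_{ij})$, twist the trivial torsor by $c_i$ over each $U_i$, glue by translation by $a_{ij}$, and check that the gluing is $G$-equivariant precisely because $c_j(g)-c_i(g)=g\cdot a_{ij}-a_{ij}$; you must also justify that this mixed complex computes the chosen model of $H^1(G,A)$ in degree $1$. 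That is exactly the ``routine bookkeeping'' you postponed, and it is where the content of the lemma lies.

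Your ``cleaner alternative'' is viable: identify $A$-torsors in the sense of the text with torsors under $A$ in the topos of $G$-equivariant sheaves on $S$ (epimorphisms there are detected on underlying sheaves, so local triviality over $S$ suffices) and invoke Giraud's classification by $H^1$ of that topos, whose global sections functor is $\Gamma^G$; but then this identification \emph{is} the proof and must be carried out, not just announced. Note that the paper proceeds differently and without any choices: from an $A$-torsor $P$ it builds the $G$-equivariant extension $1\to A\to E\to\ZZ\to 1$ with $E=\coprod_{n\in\ZZ}P^{\wedge n}$ (contracted powers of $P$) and takes its class in $\Ext^1_G(\ZZ,A)\simeq H^1(G,A)$; this avoids covers and cocycles, makes canonicity evident, and extends to noncommutative $A$ by working with bitorsors.
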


In the proof below, starting from an $A$-torsor $P$
we will provide a construction of a canonical extension
$1\to A\to E\to \ZZ\to 1$ such that $P$ is the preimage of
$E\to\ZZ$ at $1\in\ZZ$. This is classical when $G=1$ and the
novelty here is to make sure that the procedure is
$G$-equivariant. Our construction is different from those of
\cite{Ol16}, 12.1.4 or \spref{02FQ}, having a more geometric
flavour. Moreover, if $A$ is a sheaf of possibly noncommutative
groups, the construction works equally well by working with
$(A,A)$-bitorsors instead of $A$-torsors, providing an extension
of the result to that case.

\begin{proof}
For an $A$-torsor $P$, recall that the opposite $A^\circ$-torsor
$P^{-1}$ is $P$ with the opposite $A^\circ$-action, that is
the action given by $a\cdot p=a^{-1}p$. The action of $G$ on $P$
commutes with the opposite $A^\circ$-action, turning $P ^{-1}$
into an $A$-torsor. Iterating the contracted product of torsors
denoted by a wedge, we define the {\em contracted powers} of $P$
as follows:
\[
P^{\wedge 0}:=A \ ; \qquad
P^{\wedge n+1}:=P^{\wedge n}\wedge P \mbox{ for all } n\ge 0 \ ;
\qquad
P^{\wedge n-1}:=P^{\wedge n}\wedge P^{-1} \mbox{ for all } n\le 0.
\]
When endowed with the diagonal action
$g\cdot (p_1\wedge\dots\wedge p_n)=gp_1\wedge\dots\wedge gp_n$,
where the $p_i$ are local sections of $P$ or $P^{-1}$, the
$A^\circ$-torsors $P^{\wedge n}$ become $A$-torsors for all
$n\in\ZZ$. We define:
\[
E:= \coprod_{n\in,\ZZ} P^{\wedge n}.
\]
The contracted product induces maps
$P^{\wedge m}\times P^{\wedge n}\to P^{\wedge m+n}$
(for all $m,n\in\ZZ$) which
assembled together endow $E$ with a group law such that the
map $f:E\to\ZZ$ mapping $P^{\wedge n}$ into $n$ is a
$G$-equivariant group homomorphism. Of course $\ker(f)=A$ and
$f^{-1}(1)=P$. The class of the extension
\[
1\too A\too E\too \ZZ\too 1
\]
defines the element of $\Ext^1_G(\ZZ,A)\simeq H^1(G,A)$
that completes the definition of the desired bijection.
The fact that the map is indeed bijective is easy and left to
the reader.
\end{proof}

For a reminder of the definition of a gerbe banded by an
abelian group, one can consult \cite{Ol16}, 12.2.2.

\begin{definition}
An {\em $A$-gerbe} is an $A^\circ$-gerbe $\sG$ endowed with
an action of $G$ such that for all sections $x\in\sG(T)$ over
some $S$-scheme $T$, the given isomorphism
$A^\circ_T\isomto\Aut_T(x)$ is $G_T$-equivariant.
\end{definition}

\begin{lemma} \label{prop-gerbes-and-H2}
There is a canonical bijection between the set of isomorphism
classes of $A$-gerbes over~$S$ and the cohomology group
$H^2(G,A)$.
\end{lemma}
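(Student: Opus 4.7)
The strategy is to upgrade the proof of Lemma~\ref{prop-torsors-and-H1} one categorical level. Given an $A$-gerbe $\sG$, I would first introduce the opposite gerbe $\sG^{-1}$ (same underlying stack, with the $A^\circ$-action composed with inversion, and unchanged $G$-action) and then the iterated contracted powers $\sG^{\wedge n}$ for $n\in\ZZ$, each of which is again an $A$-gerbe under the diagonal $G$-action. The contracted product furnishes canonical $G$-equivariant associativity and unit isomorphisms $\sG^{\wedge m}\wedge\sG^{\wedge n}\isomto\sG^{\wedge(m+n)}$, so the disjoint union
\[
\sE := \coprod_{n\in\ZZ}\sG^{\wedge n}
\]
carries a strictly commutative $G$-equivariant Picard stack structure over~$S$, with $\pi_0(\sE)=\ZZ$ (components indexed by $n$) and $\pi_1(\sE)=A$ (automorphisms of any object, via the band).

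The next step invokes Deligne's equivalence (SGA~4, Exp.~XVIII, 1.4) between strictly commutative Picard stacks in a topos and two-term complexes in $D^{[-1,0]}$ of abelian sheaves, applied internally in the topos of sheaves of $G$-modules on $S$. Under this equivalence, $\sE$ corresponds to a complex $[E_1\to E_0]$ of sheaves of $G$-modules (placed in degrees $-1$ and $0$) with $H^{-1}=A$ and $H^0=\ZZ$. Unfolding, one obtains a 2-extension
\[
0\to A\to E_1\to E_0\to\ZZ\to 0
\]
of sheaves of $G$-modules, whose Yoneda class in $\Ext^2_G(\ZZ,A)=H^2(G,A)$ is by definition the cohomology class attached to~$\sG$.

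For the converse, I would start from a 2-extension as above and split it as $0\to A\to E_1\to K\to 0$ and $0\to K\to E_0\to\ZZ\to 0$ with $K:=\image(E_1\to E_0)$. The second sequence yields a $K$-torsor $T:=E_0\times_\ZZ\{1\}$ with compatible $G$-action, and the first sequence is then used to form the $A$-gerbe $\sG$ of $E_1$-liftings of~$T$: sections over $U\to S$ are pairs $(T',\varphi)$ with $T'$ an $E_1$-torsor on $U$ and $\varphi:T'\wedge^{E_1}K\isomto T|_U$, the $G$-action being inherited from that on~$T$. Yoneda-equivalent 2-extensions give rise to isomorphic gerbes and vice versa, which provides the inverse assignment.

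The main obstacle will be verifying that these two assignments are mutually inverse and that they match the equivalence relations on each side. I expect this to reduce, via the equivariant Deligne equivalence, to routine manipulations of two-term complexes in $D(G\text{-mod})$, in direct analogy with the torsor case. The only essential ingredient beyond the proof of Lemma~\ref{prop-torsors-and-H1} is thus the $G$-equivariant form of Deligne's theorem, which itself follows by applying the classical statement internally in the topos of $G$-equivariant sheaves on $S$.
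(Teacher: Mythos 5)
Your route is genuinely different from the one taken in the paper, and in outline it works. The paper does not categorify the torsor argument directly: it chooses an injection $i:A\to I$ into an injective sheaf of $G$-modules, observes (following \cite{Ol16}, 12.2.8--12.2.9) that the pushed-forward gerbe $i_*\sG$ is neutral, extracts from a trivialization a $G$-equivariant torsor under $K=I/A$, applies the construction of Lemma~\ref{prop-torsors-and-H1} to that torsor to get $1\to K\to E\to\ZZ\to 1$, and splices with $0\to A\to I\to K\to 0$ to obtain the length-two extension $1\to A\to I\to E\to\ZZ\to 1$ representing the class in $\Ext^2_G(\ZZ,A)\simeq H^2(G,A)$. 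This is a dimension-shifting argument; it costs the choices of $i$ and of the trivialization, and the paper explicitly remarks that it does not know how to make the extension canonical. Your construction --- contracted powers $\sG^{\wedge n}$ assembling into a $G$-equivariant strictly commutative Picard stack $\sE$ with $\pi_0=\ZZ$, $\pi_1=A$, then Deligne's correspondence \cite{SGA4.3}, Exp.~XVIII, 1.4 to produce a two-term complex and hence a $2$-extension of $\ZZ$ by $A$ --- is exactly the one-level-up analogue of the proof of Lemma~\ref{prop-torsors-and-H1}, and if carried out it would yield a class canonically attached to $\sG$, which is precisely what the paper says it would like but does not achieve.

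The price is the two pieces of machinery you invoke rather casually. First, that $\coprod_{n\in\ZZ}\sG^{\wedge n}$ is a \emph{strictly} commutative Picard stack requires constructing the contracted product of $A$-gerbes together with all associativity and symmetry constraints, checking $\sigma_{x,x}=\id$, and checking that the whole package is compatible with the $G$-action (which itself is a $2$-categorical datum, not just an action on objects); this is true but is a nontrivial coherence verification, heavier than anything in the torsor case. Second, the ``equivariant Deligne equivalence'' is not available off the shelf in the form you need: you must either develop Picard stacks over the classifying topos of the sheaf of groups $G$ and prove $2$-descent identifying them with $G$-equivariant Picard stacks on $S$, or redo Deligne's argument equivariantly. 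This is precisely the foundational overhead the paper says is lacking and deliberately works around by staying with explicitly equivariant objects; your proof would have to supply it. So the argument is a legitimate and arguably more satisfying alternative, but as written it outsources its two key steps to statements that would themselves need proofs comparable in length to the paper's entire subsection.
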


Similarly as before, starting from an $A$-gerbe $\sG$ we
construct a length two extension $1\to A\to E\to F\to \ZZ\to 1$.
It would be very interesting to produce a {\em canonical}
extension. Since we do not know how (and fortunately we do not
need) to do this, we merely adapt the proof of \cite{Ol16}, 12.2.8.

\begin{proof}
Let $\sG$ be an $A$-gerbe. Choose an injection $i:A\to I$
into an injective sheaf of $G$-modules and let $K=I/A$ be
the quotient. Since $I$ is injective, the gerbe $i_*\sG$ is
neutral (see \cite{Ol16}, 12.2.9) hence there exists a section
$\eta:S\to\sG$. Let $\sP$ be the $K^\circ$-torsor of sections
of $\sG$ that induce $\eta$, defined as in the proof of
\cite{Ol16}, 12.2.8. Since the previous constructions are
$G$-equivariant, the torsor $\sP$ acquires a $G$-action making
it a $K$-torsor. Let $1\to K\to E\to\ZZ\to 1$ be the extension
attached to this torsor like in the proof
of~\ref{prop-torsors-and-H1}. We obtain a length 2 extension
of $G$-modules
\[
1\too A\too I\too E\too \ZZ\too 1
\]
whose class in $\Ext^2_G(\ZZ,A)\simeq H^2(G,A)$ defines
the desired bijection. Again, the verification that the
extension class does not depend on the choices of $i:A\to I$
and $\eta$,
and that the resulting map is bijective, being classical,
are left to the reader.
\end{proof}

\begin{definition}
A $G$-Picard stack is a Picard stack $\sP$ over $S$ endowed
with a $G$-action such that the addition morphism
$+:\sP\times\sP\to\sP$ is $G$-equivariant. We denote by
$\sP^\circ$ the underlying Picard stack, devoid of $G$-action.
\end{definition}

\begin{definition}
Let $\sP$ be a $G$-Picard stack. A $\sP$-torsor is an
$S$-stack $\sQ$ endowed with an action $\mu:\sP\times\sQ\to\sQ$
of $\sP$ such that
$(\mu,\pr_2):\sP\times\sQ\to\sQ\times\sQ$ is an isomorphism
(that is, the action is free and locally transitive).
\end{definition}

If $\sP$ is a $G$-Picard stack, the sheaf of isomorphism
classes $A$ and the sheaf of automorphisms of the neutral
object $e\in\sP(S)$ are sheaves of $G$-modules.

Endowed with the contracted product, the set of isomorphism
classes of $\sP^\circ$-torsors is a group denoted
$H^1(S,\sP^\circ)$, see~\cite{Bre90},~Prop.~6.2. Similarly
we can define a group of isomorphism classes of $\sP$-torsors
which we denote $H^1(G,\sP)$.
It is proved in \cite{Bro21}, Prop.~5.11 that
$H^2(S,A^\circ)=H^1(S,BA^\circ)$ and this group classifies
$A^\circ$-gerbes or $BA^\circ$-torsors (clearly
$(BA)^\circ=B(A^\circ)$). As we did before, one can follow
the constructions of the proof of {\em loc. cit.} and notice
that they are $G$-equivariant, thereby enhancing the previous
isomorphism to an isomorphism $H^2(G,A)=H^1(G,BA)$, both groups
classifying $A$-gerbes or $BA$-torsors. Since we will not need
this, we omit the details.

By assembling together torsors and gerbes, we can prove the
following triviality result for torsors under certain Picard
stacks which will be the key to the proof of
Theorem~\ref{thm:smoothness-fixed-points} below.

\begin{lemma} \label{lemma:torsors_under_Picard_stacks}
Let $\sP$ be a $G$-Picard stack over $S$. Let $P$ be the sheaf
of isomorphism classes and~$A$ the sheaf of automorphisms
of the neutral object $e\in\sP(S)$. If $H^1(G,P)=H^2(G,A)=0$
then $H^1(G,\sP)=0$, that is, all $\sP$-torsors over $S$
are trivial.
\end{lemma}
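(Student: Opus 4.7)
The plan is to exploit the canonical two-step filtration of a Picard stack. For any Picard stack $\sP$ with sheaf of isomorphism classes $P$ and sheaf of automorphisms of the neutral object $A$, there is a canonical ``short exact sequence'' of Picard stacks
\[
0 \too BA \too \sP \too P \too 0,
\]
in which $P$ is viewed as a discrete Picard stack and the inclusion $BA\into \sP$ identifies $BA$ with the residual gerbe at $e$. When $\sP$ is a $G$-Picard stack, all three terms are naturally $G$-Picard stacks and the two arrows are $G$-equivariant morphisms of Picard stacks.

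First I would push any given $\sP$-torsor $\sQ$ along $\sP\to P$ to obtain $\bar\sQ\defeq\sQ\wedge^{\sP}P$, which is nothing but the sheaf of isomorphism classes of $\sQ$. This $\bar\sQ$ is naturally a $G$-equivariant $P$-torsor, i.e.\ a $P$-torsor in the sense of the definition preceding Lemma~\ref{prop-torsors-and-H1}. By that lemma its isomorphism class lies in $H^1(G,P)$, which vanishes by hypothesis. Consequently $\bar\sQ$ admits a $G$-equivariant trivialization, that is, a $G$-equivariant section $\bar s:S\to\bar\sQ$.

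Next I would lift $\bar s$ to a trivialization of $\sQ$ itself. The fibre of $\sQ\to\bar\sQ$ over $\bar s$ is a substack $\sR\subset\sQ$ which, from the $G$-equivariance of the above sequence and of $\bar s$, inherits the structure of a $G$-equivariant $BA$-torsor, equivalently of an $A$-gerbe in the sense of the definition preceding Lemma~\ref{prop-gerbes-and-H2}. By that lemma its isomorphism class lies in $H^2(G,A)=0$, so $\sR$ is neutral; a $G$-equivariant section $S\to\sR\subset\sQ$ thus exists and provides the sought trivialization of the $\sP$-torsor $\sQ$.

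The main obstacle will be checking that this two-step d\'evissage is genuinely $G$-equivariant, and in particular that the torsor $\bar\sQ$ and the gerbe $\sR$ that appear really correspond, via the bijections of Lemmas~\ref{prop-torsors-and-H1} and~\ref{prop-gerbes-and-H2}, to the classes in $H^1(G,P)$ and $H^2(G,A)$ attached to $\sQ$ by the exact sequence $0\to BA\to \sP\to P\to 0$. This amounts to checking that the contracted product and the gerbe-construction carried out in the previous two lemmas are compatible with the $G$-action throughout, which is routine bookkeeping on top of the equivariant constructions already performed in Subsection~\ref{G-torsors-and-G-gerbes}.
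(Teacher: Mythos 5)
Your proposal is correct and follows essentially the same two-step d\'evissage as the paper's proof: first pass to the sheaf of isomorphism classes, which is a $P$-torsor trivialized by $H^1(G,P)=0$, and then show that the fibre of $\sQ$ over the resulting section is an $A$-gerbe trivialized by $H^2(G,A)=0$. The only cosmetic difference is that you package this via the sequence $0\to BA\to\sP\to P\to 0$ and contracted products, whereas the paper manipulates the torsor isomorphism $\sP\times\sQ\isomto\sQ\times\sQ$ and its inertia directly; the equivariance verifications you defer at the end are precisely the ones the paper carries out explicitly (or leaves to the reader).
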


\begin{proof}
Let $\sQ\to S$ be a $\sP$-torsor, so we have an isomorphism:
\[
\sP\times\sQ\isomto\sQ\times\sQ.
\]
Passing to sheaves of isomorphism classes, we obtain:
\[
P\times Q\isomto Q\times Q,
\]
that is $Q\to S$ is a $P$-torsor. Since $H^1(G,P)=0$,
by Lemma~\ref{prop-torsors-and-H1}
this torsor has a section $q:S\to Q$. Let $\sG=q^*\sQ$,
a gerbe over $S$. The isomorphism
$\sP\times\sQ\isomto\sQ\times\sQ$ sends $(0,q)$ to $(q,q)$;
passing to inertia stacks in this isomorphism we obtain
\[
BA\times\sG\isomto\sG\times\sG,
\]
that is $\sG\to S$ is a $BA$-torsor. This means that $\sG$
is an $A$-gerbe; let us provide the easy verifications of
this. Let $T$ be an $S$-scheme
and $x\in\sG(T)$ a section. We thus have an isomorphism:
\[
f:A\times\Aut(x)\isomto \Aut(x)\times\Aut(x),
\quad f(a,u)=(u,a\cdot u).
\]
By computing the images of $(ab,\id_x)=(a,\id_x)(b,\id_x)$ in
two different ways, one finds that the map
$\iota_x:A\to\Aut(x)$, $a\mapsto a\cdot \id_x$ is a morphism
of groups. By using that $f$ is bijective as a sheaf map,
we find that the same is true for $\iota_x$. Finally, by using
that $f$ is $G$-equivariant we obtain the same conclusion
for~$\iota_x$. The collection of isomorphisms $\{\iota_x\}$
shows that $\sG$ is an $A$-gerbe. Since $H^2(G,A)=0$, by
Lemma~\ref{prop-gerbes-and-H2} this gerbe has a section
$\alpha:S\to\sG$. Using homogeneity we have
$\sQ\isomto Q\times\sG$ which the section
$(q,\alpha):S\to Q\times\sG$ trivializes and finally $\sQ$
is trivial.
\end{proof}

\subsection{Smoothness}

In this subsection, we study the smoothness of
fixed point stacks. For fixed point schemes, a useful
reference is \cite{SGA3.2}, Expos\'e XII, \S~9 (the reader
should be careful however that in Prop.~9.2 of {\em loc. cit.}
the assumption that $X$ is separated over $S$ is missing).

\begin{notitle}{The equivariant cotangent complex}
Let $\sX\to S$ be a smooth algebraic stack. We want to recall
the elementary description of the cotangent complex in this
context; since we will have
to handle stacks endowed with a group action, it is appropriate
to work with sheaves on the equivariant site. The reader is
assumed to be familiar with basics on equivariant
quasi-coherent sheaves on schemes, like for instance in
\cite{AOV08}, \S~2.1.

Let $G\to S$ be a flat, locally finitely presented group
algebraic space acting on $\sX\to S$.
The {\em equivariant lisse-\'etale site} is the site
$\Liset{}\!^G(\sX)$ whose underlying category is
the category of smooth $G$-schemes $U\to \sX$ (that is
$G$-schemes $U\to S$ with a smooth equivariant morphism
$U\to \sX$), and whose covering families $\{U_i\to U\}_{i\in I}$
are families of \'etale $G$-equivariant morphisms such that
$\amalg_{i\in I} U_i\to U$ is surjective. Particular objects
of this site can be obtained as pullbacks
$U\defeq V\times_{\sX\!/G}\sX$ of objects $V\to \sX\!/G$
of the ordinary, non-equivariant lisse-\'etale site of the
quotient stack. In particular, we see that $\sX$ has
equivariant smooth atlases.

We define the
{\em equivariant cotangent complex $\LL_{\sX\!/S}$} of
$(G,\sX)$ as an object of the derived category of bounded
complexes of $G$-quasi-coherent modules.
Let $f:U\to \sX$ be an object in $\Liset{}\!^G(\sX)$. Choose
a $G$-atlas $V\to \sX$ and write $f':U\times_{\sX}V\to V$
the pullback of $f$. Then the sheaf
${\Omega^1_{U/\sX}}{}_{|U\times_{\sX}V}\defeq
\Omega^1_{U\times_{\sX}V/V}$
descends along $U\times_{\sX}V\to U$ to a $G$-quasi-coherent
$\cO_U$-module which we denote $\Omega^1_{U/\sX}$.
For each object $f:U\to X$ in $\Liset{}\!^G(\sX)$, we define
a length two complex with sheaves placed in degrees 0 and 1:
\[
\LL_{\sX\!/S}{}_{|U}\defeq\left[\Omega^1_{U/S}\too\Omega^1_{U/\sX}\right].
\]
If $f:V\to U$ is a morphism in $\Liset{}\!^G(\sX)$, there
is a commutative diagram
\[
\begin{tikzcd}
f^*\Omega^1_{U/S} \ar[r] \ar[d] & \Omega^1_{V/S} \ar[d] \\
f^*\Omega^1_{U/\sX} \ar[r] & \Omega^1_{V/\sX}
\end{tikzcd}
\]
which induces a quasi-isomorphism
\[
\theta_f:\left[f^*\Omega^1_{U/S}\to f^*\Omega^1_{U/\sX}\right]
\too
\left[\Omega^1_{V/S}\to \Omega^1_{V/\sX}\right].
\]
Moreover, for $W/V/U$
in $G\mbox{-}\Liset(\sX)$ these quasi-isomorphisms satisfy the
cocycle condition. The equivariant cotangent complex is the
complex defined by the data $(\LL_{\sX\!/S}{}_{|U},\theta_f)$.
\end{notitle}

\begin{notitle}{Deformations of sections of $\sX\to S$}
In what follows we work on both the small \'etale site
$S_{\et}$ and the big fppf site $S_{\fppf}$. We denote by
$\eps:S_{\fppf}\to S_{\et}$ the canonical morphism.
For basics on Picard stacks and their torsors, we refer
to Deligne~\cite{SGA4.3}, Exp.~XVIII, \S~1.4,
Breen~\cite{Bre90}, section~6 and Brochard~\cite{Bro21},
sections~2 and 5.

Let $I$ be a quasi-coherent $\cO_S$-module.
Let $\Thick(S,I)$ be the category of {\em thickenings of $S$
by~$I$}, which by definition are pairs $(S\into S',u)$ composed
of a closed immersion of schemes defined by a square-zero
ideal, and an isomorphism $I\simeq\ker(\cO_{S'}\to\cO_S)$
which most often is omitted from the notation. There is a
stack $\sThick(S,I)$ on $S_{\et}$,
whose fibre category over $U\to S$ is $\Thick(U,I_{|U})$.
(Here we are forced to work on the small \'etale site
in order to guarantee existence of pullbacks: only for
\'etale $T\to S$ does the thickening $S\into S'$ lift
uniquely to a thickening $T\into T'$.)
This is endowed with the structure of a Picard stack whose
neutral object is the thickening $S\into S[I]$ where
$S[I]=\Spec(\cO_S\oplus I)$ with $I^2=0$.

The tangent stack of $\sX$ relative to $I$ is the stack
$\sT_{\sX\!/S}(I)\defeq\sHom(S[I],\sX)$ on $S_{\fppf}$
whose points are the morphisms $S[I]\to\sX$.
It comes endowed with a morphism $\sT_{\sX\!/S}(I)\to \sX$
induced by the immersion $S\into S[I]$. If $x:S\to\sX$
is a section, the pullback $x^*\sT_{\sX\!/S}(I)$
is the stack of morphisms extending $x$.
Since $\sX\to S$ is smooth, the usual computation shows
that $x^*\sT_{\sX\!/S}(I)$ is canonically and
equivariantly isomorphic to the stack associated as in
Deligne's expos\'e \cite{SGA4.3}, Exp.~XVIII, \S~1.4
to the length two complex
$\tau_{\le 0}R\Hom(x^*\LL_{\sX\!/S},I)$.

Let $\Exal_{\sX}(S,I)$ be the category whose objects
are the pairs composed of a thickening $S\into S'$ of
$S$ by $I$, and a morphism $x':S'\to\sX$ extending $x$
(this includes the datum of a 2-isomorphism
$u:x'_{|S}\simeq x$). There is a Picard stack
$\sExal_{\sX}(S,I)$ on $S_{\et}$, whose fibre
category over $U\to S$ is $\Exal_{\sX}(U,I_{|U})$.
Moreover, this sits in an exact sequence of Picard stacks:
\begin{equation}\label{eq-SES-Picard-stacks}
0 \too x^*\sT_{\sX\!/S}(I) \too \sExal_{\sX}(S,I)
\too \sThick(S,I) \too 0.
\end{equation}
Here, exactness on the right is guaranteed by the
smoothness of $\sX\to S$. It follows that the fibre of
$\sExal_{\sX}(S,I) \to \sThick(S,I)$
above a given thickening $S\into S'$ is a torsor under
$x^*\sT_{\sX\!/S}(I)$.
\end{notitle}

\begin{notitle}{Group action}
Now assume that $x:S\to \sX$ is fixed by $G$, which means
that it is given with a collection of isomorphisms
$\{\alpha_g:gx\simeq x\}_{g\in G}$. In this case $G$ acts
on $\sExal_{\sX}(S,I)$ as follows:
\[
g\cdot (x',u)=(g\circ x',\alpha_g\circ gu).
\]
The group $G$ also acts on $x^*\sT_{\sX\!/S}(I)$ by the
same formula; it acts trivially on $\sThick(S,I)$ and the
exact sequence (\ref{eq-SES-Picard-stacks}) is equivariant.

From this, it is natural to approach
Theorem~\ref{thm:smoothness-fixed-points} by descending the
sequence (\ref{eq-SES-Picard-stacks}) to an exact sequence
on the small \'etale site of $BG$ and proving smoothness via
triviality of a certain $x^*\sT_{\sX\!/S}(I)$-torsor on $BG$.
This indeed works well; however, since proper foundations
for Picard stacks over a
stacky site such as $(BG)_{\et}$ are lacking, we prefer to work
with equivariant objects using the material developed in
Subsection~\ref{G-torsors-and-G-gerbes}.
\end{notitle}

\begin{notitle}{Linearly reductive group schemes}
We can now introduce linearly reductive group schemes and
prove the statement of
smoothness in Theorem~\ref{th-2}. We use the notion
of linear reductivity given by Alper~\cite{Alp13},
Def.~12.1; see also Brion \cite{Bri21}.
For recent results concerning {\em affine} linearly
reductive group schemes the reader may look at
Alper, Hall and Rydh's article~\cite{AHR21}, Section~19.
\end{notitle}

\begin{definition} \label{defi:lin_red_gp}
A flat, finitely presented, separated group scheme
$G\to S$ is called {\em linearly reductive} if the
functor $\QCoh^G(S)\to\QCoh(S)$, $\cF\mapsto \cF^G$ is exact.
\end{definition}

Our interest for linearly reductive group schemes is that
if $S$ is affine, the higher Hochschild cohomology of
quasi-coherent $G$-$\cO_S$-modules vanishes, as follows
from the definition:
\[
H^i(G,\cF)=0 \mbox{ for all }
\cF\in \QCoh^G(S) \mbox{ and } i\ge 1.
\]
The class of linearly reductive group schemes is stable by
base change, faithfully flat descent (\cite{Alp13},
Prop.~12.8), subgroups with affine quotient (Matsushima's Theorem,
\cite{Alp13}, Th.~12.15),
and extensions (\cite{Alp13}, Prop.~12.17). It contains:
\begin{trivlist}
\itemm{1} group schemes of multiplicative type, by \cite{SGA3.2},
Exp.~IX, Th.~3.1;
\itemm{2} finite locally free group schemes of order
invertible
on $S$, by the existence of explicit avering operators;
\itemm{3} abelian schemes, by \cite{Alp13}, Ex.~12.4;
\itemm{4} reductive group schemes, if $S$ is a $\QQ$-scheme,
by the following results of \cite{Alp14}:
such a group scheme is geometrically reductive (Th.~9.7.5),
hence $BG\to S$ is adequately affine (Def.~9.1.1)
and cohomologically affine (Lem.~4.1.6) which is the definition
of  linearly reductive (Rem.~9.1.3).
\end{trivlist}

\begin{theorem} \label{thm:smoothness-fixed-points}
Let $S$ be a scheme and $\sX\to S$ an algebraic stack
with affine, finitely presented diagonal. Let $G\to S$ be a
linearly reductive group scheme. If $\sX\to S$ is smooth,
the fixed point stack $\sX^G\to S$ is smooth.
\end{theorem}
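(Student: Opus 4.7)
The plan is to verify the infinitesimal lifting criterion. Since $\sX \to S$ is smooth (hence locally of finite presentation) and $\sX^G \to \sX$ is locally of finite presentation by Theorem~\ref{th-fixed-pt-stack}, the structure morphism $\sX^G \to S$ is itself locally of finite presentation. It therefore suffices to check formal smoothness: given a square-zero thickening $S \hookrightarrow S'$ of an affine scheme defined by a quasi-coherent ideal $I$, and a $G$-fixed section $x : S \to \sX$ (that is, an object of $\sX^G(S)$), one must produce a $G$-equivariant extension $x' : S' \to \sX$.

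The key observation is that the stack of extensions is governed by the exact sequence of equivariant Picard stacks
\[
0 \too x^*\sT_{\sX/S}(I) \too \sExal_{\sX}(S,I) \too \sThick(S,I) \too 0
\]
introduced just before the theorem. The $G$-fixed thickening $S \hookrightarrow S'$ defines a $G$-invariant section of $\sThick(S,I)$, whose preimage $\sQ$ in $\sExal_{\sX}(S,I)$ is a $G$-equivariant torsor under the $G$-Picard stack $\sP \defeq x^*\sT_{\sX/S}(I)$. A $G$-invariant section of $\sQ$ is precisely a $G$-equivariant lift of $x$, so the theorem will follow from the triviality of this equivariant torsor, which I shall deduce from Lemma~\ref{lemma:torsors_under_Picard_stacks}.

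To apply that lemma, let $P$ be the sheaf of isomorphism classes of $\sP$ and $A$ the sheaf of automorphisms of its neutral object. Because $\sX \to S$ is smooth, the pullback $x^*\LL_{\sX/S}$ is a perfect complex of tor-amplitude $[0,1]$, and the Deligne correspondence identifies $\sP$ with the Picard stack attached to $\tau_{\le 0} R\sHom(x^*\LL_{\sX/S}, I)$; concretely, $A$ and $P$ are the quasi-coherent $\cO_S$-modules $\sExt^i(x^*\LL_{\sX/S}, I)$ for $i = -1, 0$ respectively, each carrying a natural $G$-equivariant structure inherited from the $G$-action on $x$. Since $S$ is affine and $G \to S$ is linearly reductive, the vanishing $H^i(G, \cF) = 0$ for all $\cF \in \QCoh^G(S)$ and $i \ge 1$ applies to both $P$ and $A$, yielding $H^1(G,P) = H^2(G,A) = 0$. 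Lemma~\ref{lemma:torsors_under_Picard_stacks} then trivializes $\sQ$, producing the required equivariant extension.

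The main obstacle I anticipate is a bookkeeping one rather than a conceptual one: one has to set up the $G$-equivariant version of the deformation-theoretic exact sequence with care, checking that the torsor structure, the Picard structure on $\sP$, and the $G$-action are compatible in the sense required by Subsection~\ref{G-torsors-and-G-gerbes}, and that the sheaves $P$ and $A$ descend correctly to quasi-coherent $G$-modules on $S$ so that the linear reductivity of $G$ can be invoked. Once these compatibilities are verified, the smoothness of $\sX^G \to S$ is an immediate consequence.
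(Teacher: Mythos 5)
Your proposal is correct and follows essentially the same route as the paper: verify the infinitesimal criterion, view the category of equivariant lifts as a torsor under the $G$-Picard stack $x^*\sT_{\sX\!/S}(I)$ via the equivariant exact sequence of Picard stacks, and trivialize it using Lemma~\ref{lemma:torsors_under_Picard_stacks} together with the vanishing $H^1(G,P)=H^2(G,A)=0$ furnished by linear reductivity. The only cosmetic difference is that the paper phrases the reduction as base-changing $\sX$ along an affine test point $T\to\sX^G$ and renaming $T$ as $S$, which is exactly the normalization you performed implicitly.
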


\begin{proof}
By Theorem~\ref{th-fixed-pt-stack} the stack $\sX^G\to S$
is algebraic and locally
of finite presentation, hence it is enough to prove that
it is formally smooth. Let $x:T\to \sX^G$ be a point with
values in some $S$-scheme $T$ which is affine. After
base-changing $\sX$ along $T\to S$ and renaming, we can assume
that $T=S$ in what follows. We have the sequence of Picard
stacks over $S$:
\[
0 \too x^*\sT_{\sX\!/S}(I) \too \sExal_{\sX}(S,I)
\stackrel{\pi}{\too} \sThick(S,I) \too 0.
\]
Since $\sX\to S$ is smooth, this sequence is exact. Moreover,
as explained before, these stacks are naturally endowed with
$G$-actions (the $G$-action on the stack of thickenings is
trivial) and the sequence is equivariant.
Let $\iota:S\into S'$ be a thickening; we have to prove that
$x$ has a lifting $x':S'\to \sX^G$. The
category of liftings of $x$ along $\iota$ is the fibre category
$\pi^{-1}(\iota)\subset \sExal_{\sX}(S,I)$. This is a
torsor under the $G$-Picard stack $x^*\sT_{\sX\!/S}(I)$,
whose sheaves of isomorphism classes $P$ and neutral
automorphisms $A$ are quasi-coherent. Because $G\to S$ is
linearly reductive, we have $H^1(G,P)=H^2(G,A)=0$.
By lemma~\ref{lemma:torsors_under_Picard_stacks}, this implies
that this torsor is trivial. In other words, it has a section
$x':S\to \pi^{-1}(\iota)$ which gives the desired lifting for~$x$.
\end{proof}

\subsection{Applications}

We conclude by giving several applications of
Theorem~\ref{thm:smoothness-fixed-points}. The first application
is to the flatness or smoothness properties of the space of
homomorphisms $\Hom(G,H)$. It relies on the following
well-known fact, a proof of which we provide for the
convenience of the reader.

\begin{lemma} \label{homs_between_classifying_stacks}
Let $G,H$ be sheaves of groups over a base scheme~$S$
(for some topology). Then there is an isomorphism of stacks
\[
[\Hom(G,H)/H]\isomto \sHom(BG,BH)
\]
where the quotient is taken for the action of $H$ on
$\Hom(G,H)$ by conjugation on the target. 
\end{lemma}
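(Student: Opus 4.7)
The plan is to construct, for each $S$-scheme $T$, a natural equivalence of groupoids
$\Phi_T \colon [\Hom(G,H)/H](T) \isomto \sHom(BG,BH)(T)$ and then check that the assignment $T \mapsto \Phi_T$ is 2-natural. First I would unpack both sides concretely: a $T$-point of the source is a pair $(P,\Psi)$ consisting of a right $H$-torsor $P \to T$ together with an $H$-equivariant morphism $\Psi \colon P \to \Hom(G,H)_T$, where $H$ acts on $\Hom(G,H)$ by conjugation on the target; a $T$-point of the target is a morphism of $T$-stacks $BG_T \to BH_T$.

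The forward construction goes by descent along the fppf cover $p \colon P \to T$. Since $p^*P = P \times_T P \to P$ carries the diagonal section, on $P$ we may read $\Psi|_P$ as an honest group homomorphism $\varphi \colon G_P \to H_P$, which induces the change-of-structure-group functor $B\varphi \colon BG_P \to BH_P$, $Q \mapsto Q \wedge^{G,\varphi} H_P$. The $H$-equivariance of $\Psi$ says precisely that the two pullbacks of $\varphi$ to $P \times_T P$ differ by conjugation by the universal element $m \in H(P \times_T P)$ measuring the discrepancy of the two canonical sections; since $\inn(m)$ induces the identity on $BH$ via a canonical 2-isomorphism, this gives a descent datum for $B\varphi$, hence a well-defined $T$-stack morphism $BG_T \to BH_T$. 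For the quasi-inverse, given $\Phi \colon BG_T \to BH_T$, evaluate on the trivial $G$-torsor to obtain an $H$-torsor $P := \Phi(\text{triv})$; the functor induces a homomorphism $\rho \colon G_T = \Aut_{BG_T}(\text{triv}) \to \Aut_{BH_T}(P)$, and the canonical identification $\Aut_{BH_T}(P) \simeq P \wedge^H H$ (with $H$ acting on itself by conjugation) converts $\rho$ into an $H$-equivariant map $\Psi \colon P \to \Hom(G,H)_T$.

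The hard part will not be these pointwise assignments---both are essentially forced by 2-Yoneda---but the 2-categorical bookkeeping: I must verify that 2-morphisms in $\sHom(BG,BH)(T)$ (i.e. natural isomorphisms between functors $BG_T \to BH_T$) correspond to isomorphisms $(P,\Psi) \isomto (P',\Psi')$ in $[\Hom(G,H)/H](T)$, that the two constructions are mutually quasi-inverse as 2-functors (not merely on isomorphism classes), and that the whole assignment is 2-natural in $T$ with respect to pullback. All of this reduces to carefully pinning down the canonical isomorphism $\Aut_H(P) \simeq P \wedge^H H$ and checking its compatibility with the standard manipulations of contracted products and descent of torsors---tedious but formal once laid out.
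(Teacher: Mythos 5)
Your proposal is correct and takes essentially the same route as the paper: the forward map is built by descent along the $H$-torsor $P\to T$ from the change-of-group construction attached to the homomorphism $\varphi\colon G_P\to H_P$, and the quasi-inverse by evaluating at the trivial $G$-torsor and reading the induced map on automorphism sheaves as an $H$-equivariant map $P\to\Hom(G,H)_T$. The only cosmetic differences are that the paper descends each pushed-forward torsor $E_P\wedge^\varphi H_P$ individually rather than the functor $B\varphi$ itself, and verifies $H$-equivariance of the backward map directly after pulling back to $P$ instead of invoking the identification $\Aut_{BH_T}(P)\simeq P\wedge^H H$.
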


\begin{proof}
We define maps in both directions. A section
of the stack $[\Hom(G,H)/H]$ is a pair composed of an $H$-torsor
$S'\to S$ and an $H$-equivariant map $f:S'\to \Hom(G,H)$. With
these data we define a map $\Phi:BG\to BH$ as follows. Let
$\varphi:G_{S'}\to H_{S'}$ be the group homomorphism determined
by $f$. To a $G$-torsor $E$ we attach the $H_{S'}$-torsor
$F_{S'}\defeq E_{S'}\wedge^\varphi H_{S'}$.
To say that $f$ is equivariant is to say that for all local sections
$h\in H$, the pullback of $\varphi$ along $h:S'\to S'$ is equal to
$h^*\varphi=c_h\circ \varphi$ where $c_h:H\to H$ is conjugation.
Therefore, for all local sections $h\in H$ we have, canonically:
\[
h^*F_{S'}=E_{S'}\wedge^{c_h\varphi} H_{S'}.
\]
This implies that the isomorphism of $H_{S'}$-torsors
\[
(\id_{E_{S'}},c_h):E_{S'}\wedge^\varphi H_{S'}\too
E_{S'}\wedge^{c_h\varphi} H_{S'}
\]
is an isomorphism $F_{S'}\isomto h^*F_{S'}$. This gives descent
data for $F_{S'}$ with respect to $S'\to S$, and we call $\Phi(E)$
the descent $F\to S$.
Conversely let $\Phi:BG\to BH$ be a morphism of stacks and let
$S'\in BH$ be the image of the trivial torsor $G$ by $\Phi$. After the
pullback $S'\to S$ the torsor $F$ becomes trivial and the map
$\Aut_S(G)\to \Aut_S(\Phi(G))$ becomes a morphism of groups:
\[
\varphi':G_{S'}=\Aut_{S'}(G_{S'}) \too \Aut_{S'}(F_{S'})=\Aut_{S'}(H_{S'})=H_{S'}.
\]
This amounts to a morphism $u:S'\to \Hom(G,H)$. For each local
section $h\in H$, we have $S'$-group schemes $a:G_{S'}\to S'$ and
$b:H_{S'}\to S'$ and the pullbacks $h^*G_{S'}$, $h^*G_{S'}$ are
isomorphic to $G_{S'},H_{S'}$ with structure maps $h^{-1}\circ a$
and $h^{-1}\circ b$. This shows that $h^*\varphi'$ is
$c_h\circ\varphi'$, hence $u$ is $H$-equivariant. It provides
a point of the quotient $[\Hom(G,H)/H]$. The two maps so
described are inverse to each other.
\end{proof}

\begin{corollary} \label{cor:smoothness-Hom-functor}
Let $G\to S$ be a linearly reductive $S$-group scheme.
Let $H$ be a flat, affine, finitely presented $S$-group scheme.
Then the stack $\sHom(BG,BH)\to S$ is algebraic and smooth.
In particular,
\begin{trivlist}
\itemm{i} $\Hom(G,H)\to S$ is flat and locally complete
intersection,
\itemm{ii} $\Hom(G,H)\to S$ is smooth if moreover $H\to S$
is smooth.
\end{trivlist}
\end{corollary}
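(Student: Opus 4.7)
The plan is to identify $\sHom(BG,BH)$ with a fixed-point stack via Lemma~\ref{homs_between_classifying_stacks}: that lemma provides an isomorphism $\sHom(BG,BH)\cong [\Hom(G,H)/H]$, which one naturally views as $(BH)^G$ for the trivial $G$-action on $BH$. Algebraicity is immediate from this identification: Theorem~\ref{theo:Hom_from_proper_reductive}(1) gives $\Hom(G,H)$ as a separated algebraic space locally of finite presentation over $S$, and since $H\to S$ is flat and finitely presented the quotient $[\Hom(G,H)/H]$ is an algebraic stack.

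For smoothness, I would apply Theorem~\ref{thm:smoothness-fixed-points} with $\sX=BH$. The diagonal $H\times BH\to BH\times BH$ is affine and of finite presentation, and $G$ is linearly reductive; the only delicate hypothesis is the smoothness of $\sX\to S$, which holds when $H\to S$ is smooth, giving the conclusion at once in that case. When $H$ is only flat, $BH$ fails to be smooth and one must adapt the proof of Theorem~\ref{thm:smoothness-fixed-points}. The key observation is that the tangent complex of $BH$ at a section is the shift $\Lie(H)[1]$ of a bounded complex of quasi-coherent $\cO_S$-modules of tor-amplitude $[0,1]$ (because $H\to S$ is flat and finitely presented). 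The Picard-stack sequence from that proof is no longer right-exact, but its failure is measured by an obstruction in a $G$-equivariant quasi-coherent sheaf, annihilated by $H^i(G,-)$ ($i\geqslant 1$) for linearly reductive $G$; combined with Lemma~\ref{lemma:torsors_under_Picard_stacks}, this shows that the relevant torsor of liftings is still trivial, verifying the infinitesimal criterion of smoothness for $\sHom(BG,BH)\to S$. The main obstacle, in my view, is to make rigorous the $G$-equivariant deformation theory of $BH$ in the case where $H$ is not smooth, which requires a careful handling of the cotangent complex of $BH$ in that range.

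The consequences (i) and (ii) then follow using that $\Hom(G,H)\to\sHom(BG,BH)$ is an $H$-torsor, in particular flat and of finite presentation: composing it with the smooth morphism $\sHom(BG,BH)\to S$ yields flatness of $\Hom(G,H)\to S$, proving the first half of (i). The lci property is obtained by observing that every flat, finitely presented, affine $S$-group scheme is lci --- its fibres over field-valued points of $S$ being lci affine group schemes --- so the $H$-torsor is lci; since the target $\sHom(BG,BH)\to S$ is smooth and lci is stable under composition with smooth morphisms, $\Hom(G,H)\to S$ is lci. For (ii), if $H\to S$ is smooth, then the $H$-torsor is itself a smooth morphism, and the composition $\Hom(G,H)\to\sHom(BG,BH)\to S$ of two smooth morphisms is smooth.
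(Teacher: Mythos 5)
There is a genuine gap, and it lies exactly where you yourself flag the ``main obstacle'': your claim that ``when $H$ is only flat, $BH$ fails to be smooth'' is false, and the workaround you sketch for that case is never actually carried out. In fact $BH\to S$ is smooth as soon as $H\to S$ is flat and locally of finite presentation, whether or not $H$ itself is smooth: the atlas $S\to BH$ is surjective, flat and locally of finite presentation (it is an fppf covering precisely because $H$ is flat and finitely presented), its composite with $BH\to S$ is the identity of $S$, and smoothness of $\sX\to S$ descends along such a morphism on the source (this is the content of \spref{0DLS}, which is what the paper invokes). The standard example is $B\mu_p$ over a base of characteristic $p$: a smooth algebraic stack whose group is nowhere smooth. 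Consequently the hypothesis of Theorem~\ref{thm:smoothness-fixed-points} is satisfied with $\sX=BH$ in \emph{all} cases covered by the corollary, the trivial $G$-action gives $\sX^G=\sHom(BG,BH)$, and smoothness follows at once --- no modification of the deformation-theoretic argument, no analysis of the non-right-exactness of the Picard-stack sequence, and no ad hoc equivariant obstruction theory for a ``non-smooth $BH$'' is needed. As written, your proof of the central case ($H$ flat but not smooth) rests on an incorrect premise and an admittedly unfinished sketch, so it does not establish the statement.

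The rest of your argument is essentially the paper's: the identification $\sHom(BG,BH)\simeq[\Hom(G,H)/H]$ from Lemma~\ref{homs_between_classifying_stacks}, and the deduction of (i) and (ii) from the fact that $\Hom(G,H)\to\sHom(BG,BH)$ is an $H$-torsor, with $H\to S$ flat and lci (resp.\ smooth), are exactly the steps taken in the text. One further caveat on your algebraicity step: you quote Theorem~\ref{theo:Hom_from_proper_reductive}(1), whose hypothesis is a composition series with reductive or proper flat factors, whereas the corollary assumes only that $G$ is linearly reductive; in the generality of the statement the algebraicity should rather be taken from the fixed-point theorem as used inside Theorem~\ref{thm:smoothness-fixed-points}, so make sure the hypotheses you invoke match the class of groups you allow.
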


\begin{proof}
The stack $\sX=BH$ has affine, finitely presented diagonal
and it is smooth because its natural atlas $S\to BH$ has smooth
source (see \spref{0DLS}). Letting $G$ act trivially on it,
we obtain $\sX^G=\sHom(S,BH)^G=\sHom(BG,BH)$ which is smooth
by Theorem~\ref{thm:smoothness-fixed-points}. The group
scheme $H\to S$ is locally complete intersection in case~(i)
and smooth in case~(ii); since $\Hom(G,H)\to\sHom(BG,BH)$
is an $H$-torsor by Lemma~\ref{homs_between_classifying_stacks},
the announced properties are deduced.
\end{proof}

The reader can find other examples of $\sHom$ stacks
in~\ref{examples-transitivity}(1) below. 

\begin{remark}
({\em Smoothness in case {\rm (ii)} by deformation theory.})
Because $\Hom(G,H)\to S$ is locally of finite presentation,
in order to prove that it is smooth when $H$ is smooth, it is
enough to verify the infinitesimal lifting criterion. Given a
nilimmersion of affine schemes $T\into T'$ over $S$, and a
$T$-morphism $f:G_T\to H_T$, we seek to lift it to
a $T'$-morphism $f':G_{T'}\to H_{T'}$. Changing
notation, we can assume that $T=S$, $T'=S'$ which are
affine schemes. Let $I=\ker(\cO_{S'}\to\cO_S)$ be the
square-zero kernel. By Illusie~\cite{Il72}, Chap.~VII,
Th.~3.3.1 the obstruction to lifting $f$ lives in
$\HH^2(BG/S,f^*\underline{\ell}{}_H^\smallvee\otimes I)$
where $\underline{\ell}{}_H \in D(BH)$ is the equivariant
co-Lie complex of $H\to S$; here the operations on
complexes $f^*$, $(-)^\smallvee$, $\otimes$ are
understood in the derived sense. Therefore it is enough
to prove that this cohomology group is zero. Let us write
once for all
\[
K:=f^*\underline{\ell}{}_H^\smallvee\otimes I.
\]
Note that $\HH^i(BG/S,K)$ denotes {\em relative} cohomology
with respect to the map $S\to BG$, as in \cite{Il71},
Chap.~III, \S~4. This is related to ordinary cohomology
$\HH^i(BG,K)$ via a long exact sequence of which we write
the part which is useful for our calculation:
\[
\dots \too \HH^1(S,K) \too \HH^2(BG/S,K) \too
\HH^2(BG,K) \too \HH^2(S,K) \too \dots
\]

We claim that $\HH^2(BG,K)=0$. To compute this group, we
use the second hypercohomology spectral sequence:
\[
{}^{\mathrm{II}}\!E_2^{i,j}
=H^i(BG,\cH^j(K))
\Longrightarrow
\HH^{i+j}(BG,K).
\]
We know that quasi-coherent cohomology on $BG$ coincides with
group cohomology of $G$. Hence, by the assumption
on $G$ we have $H^i(BG,\sF)=H^i(G,\sF)=0$ for $i\ge 1$
for all quasi-coherent sheaves $\sF$ on~$BG$. It
follows that the spectral sequence collapses at $E_2$,
giving isomorphisms
\[
\HH^n(BG,K) \simeq
H^0(BG,\cH^n(K))
=\Gamma^G(\cH^n(K))
\]
for all $n$, where $\Gamma^G(-)$ denotes $G$-invariant
global sections. We claim that the sheaf
$\cH^n(K)$ vanishes
for all $n\ge 2$. The complex
$f^*\underline{\ell}{}_H^\smallvee$
has perfect amplitude in $[0,1]$ (\cite{Il72}, Chap.~VII,
\S~3.1); thus replacing~$I$ by a flat resolution
$\dots\to I_{-2}\to I_{-1}\to I_0$ and computing the total
complex of $f^*\underline{\ell}{}_H^\smallvee\otimes I$ we find
that it has no cohomology in degrees $\ge 2$, as claimed.

To compute $\HH^1(S,K)$ we can proceed similarly. Since $S$
is affine, the quasi-coherent sheaves $\cH^j(K)$ have no
higher cohomology. It follows that the hypercohomology
spectral sequence degenerates at $E_2$, giving isomorphisms
$\HH^n(S,K) \simeq H^0(S,\cH^n(K))$. In case
\ref{cor:smoothness-Hom-functor}(i) the complex
$f^*\underline{\ell}{}_H^\smallvee$ is locally represented
by a two-term complex of quasi-coherent sheaves $[F_0\to F_1]$;
then the total complex has a term $F_1\otimes I_0$ in degree~1
and we can not conclude to the vanishing of $\HH^1(S,K)$.
In case \ref{cor:smoothness-Hom-functor}(ii) however, the
complex~$\underline{\ell}{}_H$ is quasi-isomorphic to the sheaf
of invariant differentials $\omega^1_{H/S}$ viewed as a complex
concentrated in degree~0 and $\HH^1(S,K)$ vanishes. In this case
$\HH^2(BG/S,K)=0$ and we are done.

Should one want to study potential cases of smoothness of
$\Hom(G,H)$ in case~(i), a natural approach would be to
undertake a more detailed analysis of $\HH^1(S,K)$.
\end{remark}

\begin{examples}
({\em Non-smooth examples.})
Here are various examples of schemes $\Hom(G,H)$; the cases
in (2) and (3) were communicated by Michel Brion and Angelo Vistoli.
\begin{trivlist}
\itemm{1} When $H$ is not flat the scheme $\Hom(G,H)$ can exhibit
all kind of behaviour: it can be smooth, or on the contrary
not even flat. To give examples, let $R$ be a discrete valuation
ring with residue field $k$. If~$M$ is a $k$-group scheme with
affine identity component, we let $M^\natural$ be the scheme
obtained by gluing
the trivial $R$-group scheme $\{1\}_R$ with $M$ along the
unit section $\{1\}_k$ of the special fibre. This is an
$R$-group scheme with affine identity component, which is
non-flat when $M\ne \{1\}_k$. Then, one has:
\begin{trivlist}
\itemm{i}
$\Hom(\GG_{m,R},(\GG_{a,k})^\natural)
\into \Hom(\GG_{m,R},\GG_{a,R})=1$
hence $\Hom(\GG_{m,R},(\GG_{a,k})^\natural)=1$
which is smooth;
\itemm{ii}
$\Hom(\GG_{m,R},(\GG_{m,k})^\natural)=
(\underline\ZZ_k)^\natural$ by a direct computation;
this is not flat.
\end{trivlist}
\itemm{2} Let $S=\Spec(k)$ where $k$ is a separably closed,
non algebraically closed field of characteristic $p$. Take
$G=\GG_m$ and $H$ a nontrivial extension of $\GG_m$ by $\alpha_p$;
such extensions are described in \cite{SGA3.2}, Exp.~XVII,
Exemple 5.9.c). If $\Hom(G,H)$ is representable by a smooth scheme
$T$, then $T(k)$ is a point because each homomorphism $G\to H$
is trivial (for otherwise its image would split the extension).
Since the extension is split on the algebraic closure $\bar{k}$,
we have $T(\bar{k})=\Hom(\GG_{m,\bar{k}},\GG_{m,\bar{k}})=\ZZ$.
But since $T$ is smooth $T(k)$ is dense in $T$, a contradiction.
\itemm{3} Here is an example where $G$ and $H$ are both finite and
linearly reductive, and the field $k$ is algebraically closed
of characteristic $p$. Suppose that $\Delta$ is a finite connected
diagonalizable $k$-group scheme, $H$ is a nontrivial group of
automorphisms of $\Delta$, and consider the semidirect product
$G := \Delta \rtimes H$. Then $\Hom(G,G)$ is representable and
finite, and contains $\Aut(G)$ as an open subscheme. It is shown
in \cite{AOV08}, Lemma 2.19 that the connected component of the
identity in $\Aut(G)$ is $\Delta/\Delta^H$, which is not reduced,
so $\Hom(G,G)$ can not be smooth.
\end{trivlist}
\end{examples}

We now give an application to functors of subgroups.

\begin{corollary} \label{coro-sub-mult-smooth}
Let $H\to S$ be an affine, flat group scheme of finite
presentation. Then:
\begin{trivlist}
\itemn{1} The functor of subgroups of multiplicative type
$\Sub_{\mult}(H)$ is flat and locally complete intersection
over $S$, and it is smooth if $H$ is.
\itemn{2} If $S$ is of characteristic~0 then $\Sub_{\red}(H)$
is smooth over $S$.
\end{trivlist}
\end{corollary}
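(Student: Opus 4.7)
The plan is to deduce both parts from the presentation $\Sub_{\type}(H)\simeq\Mono(G(\type),H)/\Aut(G(\type))$ established inside the proof of Theorem~\ref{theorem:subgroups_reductive_affine}(1), together with the smoothness/flatness statements of Corollary~\ref{cor:smoothness-Hom-functor}, and then to descend the desired properties along the quotient torsor.

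For~(1), I would fix a type $\type=[(M,M^*,\varnothing,\varnothing)]$ and set $G=D(M)$. Since $G$ is diagonalizable it is linearly reductive, so Corollary~\ref{cor:smoothness-Hom-functor} applied with this $G$ and the flat affine $H$ gives that $\Hom(G,H)\to S$ is flat and locally complete intersection, and smooth when $H\to S$ is. Lemma~\ref{lemma:Mono_reductive_separated}(1) makes $\Mono(G,H)$ an open subscheme, which therefore inherits these properties. Now $A:=\Aut(D(M))$ is the twisted constant $S$-group scheme $\underline{\Aut_{\ZZ}(M)}$, in particular \'etale over $S$; consequently the $A$-torsor $\Mono(G,H)\to \Mono(G,H)/A=\Sub_M(H)$ is an \'etale surjection. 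Since flatness, local finite presentation and locally-complete-intersection-ness are all \'etale-local on the source, they descend to $\Sub_M(H)\to S$, and similarly for smoothness when $H$ is smooth. Taking the coproduct over $M$ gives~(1).

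For~(2), assume $S$ is a $\QQ$-scheme. The key preliminary observation is that $H\to S$ is automatically smooth: its fibres are affine algebraic groups over fields of characteristic~$0$ and hence smooth by Cartier's theorem, and a flat, finitely presented morphism with smooth fibres is smooth. Now for any reductive type $\type$, the split reductive group $G(\type)$ over the $\QQ$-scheme $S$ is linearly reductive by item~(4) following Definition~\ref{defi:lin_red_gp}, so Corollary~\ref{cor:smoothness-Hom-functor}(ii) gives that $\Hom(G(\type),H)\to S$ is smooth, hence so is the open subscheme $\Mono(G(\type),H)$. The automorphism scheme $A=\Aut(G(\type))$ is smooth over~$S$ (\cite{SGA3.3}, Exp.~XXIV, Th.~1.3), so the $A$-torsor $\Mono(G(\type),H)\to \Sub_\type(H)$ is a smooth surjection; smoothness descends along such a map, so $\Sub_\type(H)\to S$ is smooth, and summing over~$\type$ produces~(2).

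The only substantive point in either step is descent of the relevant property along the quotient torsor. This is routine once one knows that the torsor is \'etale (in part~(1), using the structure of $\Aut(D(M))$ as a twisted constant group scheme) or smooth (in part~(2), using smoothness of the automorphism scheme of a reductive group); so I do not expect a genuine obstacle, only the need to assemble these pieces cleanly.
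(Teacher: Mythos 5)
Your proposal is correct and follows essentially the same route as the paper: apply Corollary~\ref{cor:smoothness-Hom-functor} to $\Hom(G(\type),H)$, pass to the open subscheme $\Mono(G(\type),H)$, and descend flatness, the lci property, and smoothness along the free quotient by $\Aut(G(\type))$ established in the proof of Theorem~\ref{theorem:subgroups_reductive_affine}, using Cartier's theorem to get smoothness of $H$ in characteristic~0. The only cosmetic difference is that in part~(1) you invoke \'etaleness of $\Aut(D(M))$ where the paper simply uses smoothness of the automorphism group scheme; both give the needed descent.
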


\begin{proof}
(1) Recall that
$\Sub_{\mult}(H)={\coprod}_{\type} \Sub_{\type}(H)$
is a sum indexed by the types
$\type=[(M,M^*,\varnothing,\varnothing)]$.
Since $G(\type)$ is of multiplicative type, it is linearly
reductive hence $\Hom(G(\type),H)$ is flat and locally complete
intersection (resp. smooth if $H$ is)
by~\ref{cor:smoothness-Hom-functor}. Since
$\Mono(G(\type),H)$ is open in the latter by
Lemma~\ref{lemma:Mono_reductive_separated}, it is flat and
locally complete intersection also. Finally, remember that in
the proof of Theorem~\ref{theorem:subgroups_reductive_affine}
we expressed $\Sub_{\type}(H)$ as the quotient of
$\Mono(G(\type),H)$ by the smooth group scheme
$\Aut(G(\type))$ acting freely; the result follows.

\smallskip

\noindent (2)
If $S$ is of characteristic~0, for an arbitrary type $\type$
the reductive group schemes $G(\type)$ are linearly reductive
and the flat group scheme $H$ is smooth. Therefore
the same arguments apply.
\end{proof}

The final application extends~\ref{cor:smoothness-Hom-functor}
as well as \cite{Ro05}, Cor.~3.11.

\begin{corollary} \label{coro:stack_of_equivariant_objects}
{\em (Stacks of equivariant objects.)}
Let $\sX\to S$ be an algebraic stack with affine, finitely
presented diagonal. Let $G\to S$ be a group space admitting
a finite composition series whose factors are either
reductive or proper, flat, finitely presented.
\begin{trivlist}
\itemn{1} The stack $\sX[G]$ of pairs $(x,\alpha)$ comprising
an object of $\sX$ and an action $\alpha:G\to\Aut(x)$ is
algebraic. Moreover $\sX^G\to \sX$ is representable by algebraic
spaces, separated and locally of finite presentation.
\itemn{2} The substack $\sX\{G\}$ composed of
pairs such that the action $\alpha$ is faithful is open.
\itemn{3} If $\sX\to S$ is smooth and $G$ is linearly
reductive, then $\sX[G]\to S$ and $\sX\{G\}\to S$ are smooth.
\end{trivlist}
\end{corollary}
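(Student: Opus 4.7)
The plan is to reduce the three assertions to the three main results of this section by regarding $G$ as acting \emph{trivially} on $\sX$. The first step will be to identify $\sX[G]$ with the fixed point stack $\sX^G$ in this trivial setting: unwinding the definition recalled at the beginning of Section~\ref{section:algebraicity-of-fixed-pts}, a section $(x,\{\alpha_g\}_{g\in G(T)})$ of $\sX^G$ with $\alpha_g:gx\to x$ and cocycle $\alpha_{gh}=\alpha_g\circ g\alpha_h$ specialises, when the action is trivial, to the datum of a single group homomorphism $\alpha:G_T\to\Aut_T(x)$, since $gx=x$ and $g\alpha_h=\alpha_h$ canonically. The same check on morphisms (the compatibility $\varphi\circ\alpha_g=\beta_g\circ g\varphi$ becomes the standard equivariance $\varphi\circ\alpha_g=\beta_g\circ\varphi$) yields an equivalence of $S$-stacks $\sX[G]\simeq\sX^G$. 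Granting this, assertion~(1) is immediate from Theorem~\ref{th-fixed-pt-stack}.

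For (2) I will argue fibrewise over $\sX$. Given $x:T\to\sX$, the hypothesis that $\sX$ has affine, finitely presented diagonal makes $\Aut_T(x)$ an affine, finitely presented $T$-group scheme, and the fibre of $\sX[G]\to\sX$ at $x$ is precisely $\Hom(G_T,\Aut_T(x))$. Faithfulness of an action $\alpha:G_T\to\Aut_T(x)$ is equivalent to $\alpha$ being a monomorphism, so $\sX\{G\}$ pulls back along any $x:T\to\sX$ to the subfunctor $\Mono(G_T,\Aut_T(x))\subset\Hom(G_T,\Aut_T(x))$. Theorem~\ref{theo:Hom_from_proper_reductive}(2) asserts that this subfunctor is open, which gives the desired openness of $\sX\{G\}$ in $\sX[G]$.

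Assertion~(3) then follows at once from Theorem~\ref{thm:smoothness-fixed-points}: the hypotheses that $\sX\to S$ is smooth and $G\to S$ is linearly reductive imply smoothness of $\sX^G=\sX[G]\to S$, after which $\sX\{G\}\to S$ is smooth because it is an open substack.

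There is no real obstacle here: this corollary is essentially a packaging of Theorems~\ref{th-fixed-pt-stack}, \ref{theo:Hom_from_proper_reductive}(2) and~\ref{thm:smoothness-fixed-points}, whose proofs constitute the technical heart of the paper. The only point worth a moment's care is the identification $\sX[G]\simeq\sX^G$ for the trivial action, both on objects and on morphisms.
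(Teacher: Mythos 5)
Your proof is correct and follows essentially the same route as the paper: let $G$ act trivially, identify $\sX^G$ with $\sX[G]$, and invoke Theorems~\ref{th-fixed-pt-stack} and~\ref{thm:smoothness-fixed-points}. Your fibrewise justification of (2) via the open immersion $\Mono(G_T,\Aut_T(x))\subset\Hom(G_T,\Aut_T(x))$ from Theorem~\ref{theo:Hom_from_proper_reductive}(2) is exactly the detail the paper's terse proof leaves implicit, so nothing is missing.
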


\begin{proof}
Letting $G$ act trivially on $\sX$, we see that the fixed
point stack is exactly $\sX[G]$. By
Theorem~\ref{th-fixed-pt-stack},
this is algebraic and the substack $\sX\{G\}$ is open.
Finally the smoothness in (3) follows from
Theorem~\ref{thm:smoothness-fixed-points}.
\end{proof}

\subsection{Failure of transitivity (erratum to~\cite{Ro05})}
\label{failure}

In \cite{Ro05}, Rem.~2.4 it is asserted that if $N\subset G$
is a normal subgroup scheme, then:
\begin{trivlist}
\itemm{i} the group $G/N$ acts on $\sX^N$ and
we have an isomorphism of stacks $\sX^G\isomto (\sX^N)^{G/N}$;
\itemm{ii} the group $G/N$ acts on $\sX/N$ and
we have an isomorphism of stacks $(\sX/N)/(G/N)\isomto \sX/G$.
\end{trivlist}
In this subsection we wish to correct this statement: in
fact, surprisingly point (i) is {\em incorrect}
(Lemma~\ref{lemma:counterexample_fixed_pts})
while point (ii) is {\em correct}
(Proposition~\ref{prop:transitivity_quotients}). To the
author's knowledge the erroneous statement (i) is not used
anywhere, but (ii) is used in the papers \cite{LMM14},
\cite{Sch17}, \cite{Sch18}, \cite{AI19}.

\bigskip

To understand what happens, recall that an action
$\mu:G\times \sX\to \sX$ is called {\em strictly trivial}
if $\mu$ is equal to the second projection, which we write
$\mu=\triv$. The action is called {\em weakly trivial}
if there exists a $G$-isomorphism
$(f,\sigma):(\sX,\mu)\isomto (\sX,\triv)$ with $f=\id_\sX$;
here $\sigma_g^x$ is an isomorphism $g.f(x)\isomto f(g.x)$,
as in \cite{Ro05}, Def.~2.1. By the 2-universal
properties of quotients and fixed points, the stacks $\sX^N$
and $\sX/N$ come equipped with actions of $G$, the
restriction to $N$ of which are {\em weakly} trivial.
However, in order to induce an action of $G/N$ one needs
to find $G$-equivariant models of $\sX^N$ and $\sX/N$ on which
the action of $N$ is {\em strictly} trivial, and it is not
clear if this is possible at all.

We first provide a counterexample to (i). The example
highlights the fact that $G$-fixed point stacks retain
information on the extension structure of $G$ which is
not captured by $(\sX^N)^{G/N}$.

\begin{lemma} \label{lemma:counterexample_fixed_pts}
Let $G,N,A$ be $S$-group schemes of multiplicative type
with $G\to S$ fibrewise connected and $N\subset G$ a normal
subgroup.
Let $\sX=BA$ be the classifying stack of $A$, endowed with
the trivial action of $G$. Then:
\begin{trivlist}
\itemn{1} We have an isomorphism of stacks
$\sX^N=BA\times\Hom(N,A)$ such that the canonical map
$\sX^N\to \sX$ is the first projection.
\itemn{2} Each action of $G$ on $\sX^N$ making $\sX^N\to \sX$
equivariant is isomorphic to the trivial action.
\itemn{3} Letting $G/N$ act trivially on $\sX^N$,
we have an isomorphism of stacks
\[
(\sX^N)^{G/N} = BA\times\Hom(G/N,A)\times\Hom(N,A)
\]
and the canonical map $\sX^G\to (\sX^N)^{G/N}$ is given
by $(E,\alpha)\mapsto (E,0,\alpha_{|N})$. In particular,
this is not an isomorphism if $\Hom(G/N,A)\ne 0$.
\end{trivlist}
\end{lemma}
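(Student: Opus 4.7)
The plan is to verify the three assertions in turn; part (1) is a direct unfolding, part (2) contains the main subtlety, and part (3) then follows formally.

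For (1), I would unfold the definition of $\sX^N$ recalled at the start of Section~\ref{section:algebraicity-of-fixed-pts}. An object of $\sX^N(T)$ is a pair $(E,\{\alpha_n\}_{n\in N(T)})$ satisfying the cocycle $\alpha_{nm}=\alpha_n\circ n\alpha_m$. Since $G$ acts strictly trivially on $\sX=BA$, we have $nE=E$ and $n\alpha_m=\alpha_m$ on the nose, so $\alpha_n\in\Aut(E)=A(T)$ (canonically, because $A$ is commutative), and the cocycle collapses to multiplicativity of $n\mapsto\alpha_n$. Morphisms reduce analogously, giving $\sX^N\simeq BA\times\Hom(N,A)$ with forgetful map to $\sX$ equal to the first projection.

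For (2), set $H\defeq\Hom(N,A)$. Cartier duality identifies $H$ with $\Hom(A^*,N^*)$, where $A^*,N^*$ are finitely generated locally constant étale group schemes, so $H$ is a locally constant étale sheaf on $S$. Given any $G$-action $\mu:G\times\sX^N\to\sX^N$ covering the trivial action on $BA$, decompose $\mu=(\mu_1,\mu_2)$ under $\sX^N=BA\times H$: the first component is 2-iso to the $BA$-projection by equivariance. The second component $\mu_2:G\times\sX^N\to H$ factors through the sheaf of isomorphism classes of its source because $H$ is a sheaf; Zariski-locally any $A$-torsor trivializes, so it suffices to study the resulting map $G\times H\to H$, which at the neutral section is the projection. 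A standard rigidity argument — the equalizer $\{(g,h):\mu_2(g,h)=h\}\subset G\times H$ is open and closed in $G\times H$ since $H$ is étale, contains the unit section, hence equals everything by fibrewise connectedness of $G$ — shows this map is the projection. Hence $\mu$ is 2-iso to the trivial action as a 1-morphism. The remaining step is to upgrade this to a full equivalence of $G$-actions compatible with the unit and associator 2-morphisms; the obstruction lives in a cohomology group with values in the abelian sheaf $A$, which vanishes by invoking Lemma~\ref{lemma:torsors_under_Picard_stacks} from Subsection~\ref{G-torsors-and-G-gerbes} (the space of such upgrades is a torsor under a $G$-Picard stack whose sheaves of isomorphism classes and neutral automorphisms are $H$ and $A$, with trivial $G$-cohomology in the relevant degrees by the étale/connected and commutativity arguments). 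This cohomological bookkeeping is the main technical obstacle.

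For (3), apply (1) to $\sX^N=BA\times H$ under the trivial $G/N$-action: the inertia of $\sX^N$ at $(E,\alpha)$ is $A$ (since $A$ is commutative, every automorphism of $E$ automatically commutes with $\alpha$), hence $(\sX^N)^{G/N}=\sX^N\times\Hom(G/N,A)=BA\times\Hom(G/N,A)\times\Hom(N,A)$ as asserted. Similarly $\sX^G=BA\times\Hom(G,A)$. Because the $G$-action on $\sX^N$ is in fact strictly trivial — it sends $(E,\alpha)\mapsto(gE,g\alpha)=(E,\alpha)$ and acts as the identity on morphisms, using strict triviality on $\sX$ — the induced $G/N$-action on $\sX^N$ is strictly trivial, and $(\sX^N)^{G/N}\to\sX^N$ admits a canonical section $\xi\mapsto(\xi,0)$ via the identity automorphism structure. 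I would then argue that the canonical map factors as
\[
\sX^G\xrightarrow{(E,\gamma)\mapsto(E,\gamma_{|N})} \sX^N\xrightarrow{\xi\mapsto(\xi,0)} (\sX^N)^{G/N},
\]
yielding $(E,\gamma)\mapsto(E,0,\gamma_{|N})$. This factorization is forced by well-definedness: any attempt to produce a homomorphism $G/N\to A$ from a homomorphism $\gamma:G\to A$ by restricting $\gamma_g$ to cosets fails because $\gamma_{gn}=\gamma_g\gamma_n$ depends on the coset representative $g$ unless $\gamma_{|N}=0$; the only canonical $G/N\to A$ one can extract is the trivial one. The image therefore lies in the closed substack $\{0\}\subset\Hom(G/N,A)$, so the map fails to be essentially surjective — hence cannot be an equivalence — whenever $\Hom(G/N,A)\ne 0$.
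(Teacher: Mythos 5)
Your parts (1) and (3) are correct and essentially the same as the paper's: (1) is the same unwinding of the cocycle condition using $\Aut(E)=A$, and in (3) your observation that no homomorphism $G/N\to A$ can be canonically extracted from $\gamma:G\to A$ unless one takes $0$ is exactly the sense in which the paper says the map $(E,\gamma)\mapsto(E,0,\gamma_{|N})$ is ``dictated by the universal properties.''

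The gap is in (2). What you actually prove is that the action $1$-morphism $\mu:G\times\sX^N\to\sX^N$ is $2$-isomorphic to the second projection; the statement, however, asks for an isomorphism of $G$-\emph{actions}, i.e.\ compatibility with the unit and associativity $2$-cells, and you explicitly defer this step to an unproved cohomological vanishing. That deferral is where the argument breaks: you do not justify why the ``space of upgrades'' should be a torsor under a $G$-Picard stack whose sheaf of isomorphism classes is $\Hom(N,A)$ (the coherence data in question take values in automorphisms of objects of $\sX^N$, hence in $A$, so the natural obstruction would live in something like Hochschild $H^2(G,A)$ with $A$ of multiplicative type), and Lemma~\ref{lemma:torsors_under_Picard_stacks} does not give the vanishing for free: in the paper that lemma is fed by linear reductivity, which kills cohomology with \emph{quasi-coherent} coefficients, whereas your coefficients $A$ and $\Hom(N,A)$ are not quasi-coherent, and ``\'etale/connected and commutativity arguments'' is not a proof of $H^2(G,A)=0$. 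The paper's proof shows that no such vanishing is needed: the hypothesis that $f:\sX^N\to\sX$ is equivariant provides $2$-cells $\sigma_g^{(E,\alpha)}:E\to\mathfrak{E}(g,E,\alpha)$, and conjugating the given action by these $\sigma$'s yields an isomorphic action $\mu'$ of the form $g\star(E,\alpha)=(E,\alpha')$, i.e.\ one fixing the torsor component on the nose, the $\sigma_g$ themselves being the isomorphism of actions $(\sX^N,\mu')\isomto(\sX^N,\mu)$. The strictified action $\mu'$ is then a morphic action of $G$ on the \'etale scheme $\Hom(N,A)$, which is trivial by precisely the rigidity argument you use (fibrewise connected group, \'etale target). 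In short, the coherence data is absorbed into the isomorphism of actions supplied by $\sigma$, rather than killed by cohomology. (Two minor slips in your sheaf-level step, harmless but worth fixing: $A$-torsors for $A$ of multiplicative type need not be Zariski-locally trivial, only fppf-locally, which is enough since $\Hom(N,A)$ is an fppf sheaf; and the open-and-closedness of your equalizer uses that $\Hom(N,A)\to S$ is \'etale \emph{and separated}.)
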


\begin{proof}
(1) Since $N$ acts trivially, a section of $(BA)^N$ over
$S$ is a pair $(E,\{\alpha_n\}_{n\in N(S)})$ composed of
an $A$-torsor $E\to S$ and a collection of isomorphisms
$\alpha_n:E\to E$ satisfying
the cocycle condition $\alpha_{nm}=\alpha_n\circ\alpha_m$
for all $n,m\in N(S)$. Since $\Aut(E)=A$, this boils down
to a pair $(E,\alpha)$ where $\alpha:N\to A$ is a morphism
of groups.

\smallskip

\noindent (2) Write $f:\sX^N\to \sX$ for the map
$(E,\alpha)\mapsto E$.
Assume given a $G$-action $(\sX^N,\mu)$ such that $f$
extends to a $G$-equivariant morphism $(f,\sigma):\sX^N\to \sX$.
For each $S$-scheme $T$ and points $(E,\alpha)\in \sX^N(T)$,
$g\in G(T)$ write the image $g\cdot (E,\alpha)$ as
$(\mathfrak{E}(g,E,\alpha),\mathfrak{a}(g,E,\alpha))$
where $\mathfrak{E}(g,E,\alpha)$ is an $A$-torsor and
$\mathfrak{a}(g,E,\alpha)$ is an $N$-linearization.
Now let $(E,\alpha)$ and $g$ be fixed and write
$\mathfrak{E}=\mathfrak{E}(g,E,\alpha)$,
$\mathfrak{a}=\mathfrak{a}(g,E,\alpha)$ for
brevity. We have an isomorphism
$\sigma\defeq\sigma_g^{E,\alpha}:E\to \mathfrak{E}$.
Define $\alpha'$ by $\alpha'(n):E\to E$, $\alpha'(n)=
\sigma^{-1}\circ \mathfrak{a}(n)\circ\sigma$. By setting
$g\star (E,\alpha)=(E,\alpha')$ we define a new $G$-stack
$(\sX^N,\mu')$ for which $f$ is strictly (and not just
weakly) invariant, together with a $G$-isomorphism
$(\sX^N,\mu')\isomto (\sX^N,\mu)$ provided by the
$\sigma_g^{E,\alpha}$. Moreover the dependence in $g$ for
$\mathfrak{a}'=\mathfrak{a}'(g,E,\alpha)$ is morphic,
that is $g\mapsto \mathfrak{a}'(g,E,\alpha)$ is an
action of $G$ on $\Hom(N,A)$. Since $G$ is connected and
$\Hom(N,A)$ is \'etale, this action is trivial.

\smallskip

\noindent (3) Reasoning as in (1) we find that
$(\sX^N)^{G/N} = BA\times\Hom(G/N,A)\times\Hom(N,A)$.
The expression for the map $\sX^G\to (\sX^N)^{G/N}$ is
dictated by the universal properties.
\end{proof}

\begin{examples} \label{examples-transitivity}
Here are examples where the map
$\sX^G\to (\sX^N)^{G/N}$ {\em is} an isomorphism.
\begin{trivlist}
\itemn{1} ($\sHom$ stacks) Assume that $\sX=\sHom(Y,Z)$ where
$Y,Z$ are algebraic stacks, and we are given an action of
$G$ on $Y$, inducing an action on $\sX$. By taking for $Z$
the stack of vector bundles (or coherent modules, or curves,
etc) we obtain for $\sX^G$ the stack of equivariant bundles
(or coherent modules, etc) on~$Y$. We claim that if
$N\subset G$ is a normal subgroup
then the map $\sX^G\to (\sX^N)^{G/N}$ is an isomorphism.
Indeed, it follows from the 2-universal property of quotient
stacks and Proposition~\ref{prop:transitivity_quotients}
below that we have canonical isomorphisms:
\[
\sX^G=\sHom(Y,Z)^G\simeq\sHom(Y/G,Z)\simeq\sHom((Y/N)/(G/N),Z)
\simeq\sHom(Y/N,Z)^{G/N}\simeq (\sX^N)^{G/N}.
\]
\itemn{2} (Product groups)
Assume that $G=H\times K$ is a product
group acting on $\sX$. Identify $G/H$ with $K$. Then
the canonical map $\sX^G\isomto (\sX^H)^K$ is an
isomorphism. The objects of the stacks on both sides can be
identified with collections
$(x,\{\alpha_{h}\}_{h\in H},\{\beta_{k}\}_{k\in K})$
where the isomorphisms $\alpha_h:x\to h^{-1}x$ and
$\beta_k:x\to k^{-1}x$ commute with each other.
\end{trivlist}
\end{examples}

In view of this, point (ii) may now seem surprising. We now
give the proof. The main idea is to strictify the action
by systematically embedding an $N$-torsor $E$ into
the induced $G$-torsor $\Ind_N^G(E)$.

\begin{proposition} \label{prop:transitivity_quotients}
Let $G\to S$ be a group scheme and $N\subset G$ a normal
subgroup scheme, both flat and locally of finite presentation.
Let $\sX\to S$ be an algebraic stack with an action of $G$.
Then there is an action of $G/N$ on the quotient stack $\sX/N$
such that the morphism $\sX/N\to \sX/G$ is invariant and induces
an isomorphism $(\sX/N)/(G/N)\isomto \sX/G$.
\end{proposition}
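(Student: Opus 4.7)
The plan is to bypass the subtlety of defining a strict $(G/N)$-action on $\sX/N$ (the same difficulty that causes statement~(i) to fail, cf. Lemma~\ref{lemma:counterexample_fixed_pts}) by establishing directly that the natural projection $p \colon \sX/N \to \sX/G$ is a $(G/N)$-torsor; the desired action on $\sX/N$ and the claimed isomorphism will then emerge canonically from this torsor structure.

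First I would construct $p$ via universal properties: the tautological $G$-torsor $\pi \colon \sX \to \sX/G$ is in particular $N$-invariant, so it factors uniquely through the tautological $N$-torsor $q \colon \sX \to \sX/N$ to produce $p$ with $p\circ q = \pi$. To show $p$ is a $(G/N)$-torsor, I would check the property fppf-locally on the target by pulling back along $\pi$ itself. The defining trivialization $\sX \times_{\sX/G}\sX \simeq G \times \sX$ of the $G$-torsor $\pi$ passes, after taking the $N$-quotient on the left factor, to an isomorphism $\sX \times_{\sX/G}(\sX/N) \simeq (G/N)\times \sX$; this exhibits the pullback of $p$ as a trivial $(G/N)$-torsor over $\sX$. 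The translation action on the $G/N$-factor, together with a cocycle check using the normality of $N$ in $G$, then descends along the fppf cover $\pi$ to give a $(G/N)$-action on $\sX/N$ which leaves $p$ invariant and realizes $p$ itself as a $(G/N)$-torsor.

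Once $p$ is known to be a $(G/N)$-torsor, the universal property of the quotient by a free action yields a canonical equivalence $(\sX/N)/(G/N) \isomto \sX/G$. If desired, an explicit quasi-inverse sends a point $(P, v) \in (\sX/G)(T)$ to the $(G/N)$-torsor $P/N \to T$ equipped with the morphism $P/N \to \sX/N$ classifying the $N$-torsor $P \to P/N$ together with the $N$-equivariant map $v \colon P \to \sX$. The main obstacle I anticipate is the cocycle verification needed to descend the $(G/N)$-action from the trivialized pullback over $\sX$ down to $\sX/N$; this calculation goes through for \emph{quotient} stacks, in contrast to the failure of statement~(i) for \emph{fixed-point} stacks, because the residual $N$-action on a $G$-equivariant object is absorbed strictly when one quotients by $N$, whereas it remains only weakly trivial after passing to $N$-fixed points, which is precisely what obstructs the analogous identification there.
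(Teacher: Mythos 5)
Your route is sound but genuinely different from the paper's. The paper does not descend anything: it replaces $\sX/N$ by an explicit strictified model $(\sX/N)^{\str}$ whose $T$-points are triples $(F\to T,\,E'\subset F,\,b:F\to\sX)$ with $F$ a $G$-torsor, $E'$ an $N$-stable sub-$N$-torsor and $b$ equivariant -- i.e.\ it systematically embeds an $N$-torsor $E$ into the induced $G$-torsor $\Ind_N^G(E)=G\times^N E$. On this model the $G$-action $g\cdot(F,E',b)=(F,g(E'),b)$ is \emph{strictly} trivial on $N$ (normality of $N$ makes $g(E')$ again an $N$-torsor), so a $G/N$-action is induced with no coherence bookkeeping, and the isomorphism $(\sX/N)^{\str}/(G/N)\isomto\sX/G$ is then checked directly. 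You instead exhibit $p:\sX/N\to\sX/G$ as a $(G/N)$-torsor by trivializing its pullback along $\pi:\sX\to\sX/G$, namely $(\sX\times_{\sX/G}\sX)/N\simeq(G/N)\times\sX$, and descending the translation action; the final step "torsor implies quotient equals base" and your explicit quasi-inverse $(P,v)\mapsto\bigl(P/N,\ P\to P/N,\ v\bigr)$ are correct and in fact amount to the verification the paper leaves to the reader. What your approach buys is brevity and a conceptually transparent statement ($p$ is a $(G/N)$-torsor); what the paper's buys is the complete avoidance of $2$-categorical descent. The one step you must not wave away is precisely the descent of the \emph{action}: an action on a stack is a morphism $\mu$ together with associativity and unit $2$-isomorphisms, so descending it along $\pi$ requires writing the transition isomorphism over $\sX\times_{\sX/G}\sX\simeq G\times\sX$ (translation by the tautological section, using normality of $N$) and checking its cocycle coherence over the triple fibre product, plus descending the action constraints. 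This is exactly the kind of "obvious" manipulation that this subsection shows can fail (statement (i) does fail), so the verification should be carried out rather than asserted -- or, alternatively, your explicit inverse in the last paragraph can be promoted to a direct construction, at which point your argument essentially becomes the paper's.
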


Note that the algebraicity statement \cite{Ro05}, Th.~4.1
assumes too strong assumptions on $G$ and $N$ (namely they
are required to be separated and of finite presentation)
than is necessary for the proof of {\em loc. cit.}
to go through.

\begin{proof}
Our main task is to find a $G$-equivariant model $\sX/N\simeq Y$
such that the $N$-action on $Y$ is strictly trivial, so that
there is an induced action of $G/N$. To this aim,
recall that the points of $\sX/N$ with values in an $S$-scheme
$T$ are the pairs composed of an $N$-torsor $E\to T$ and an
$N$-equivariant map $a:E\to \sX$. Let $Y$ be the $S$-stack whose
$T$-points are the triples $(F\to T,E'\subset F,b:F\to \sX)$ with
\begin{itemize}
\item a $G$-torsor $F\to T$;
\item a subspace $E'\subset F$ which is $N$-stable and an
$N$-torsor over $S$;
\item a $G$-equivariant map $b:F\to \sX$.
\end{itemize}
There is a morphism $\lambda:\sX/N\too Y$ which sends
$(E\to T,E\to \sX)$ to the triple given by
\begin{itemize}
\item $F=\Ind_N^G(E)=G\times^NE$ is the induction of
the $N$-torsor $E$ to $G$, that is $F=(G\times E)/N$
where~$N$ acts by $n(g,e)=(gn^{-1},ne)$;
\item $E'$ is the image of the monomorphism $E\to F$,
$e\mapsto (1,e)$; this may alternatively be seen as the
preimage of 1 under the map $F\to G/N$, $(g,e)\mapsto (g\mod N)$;
\item $b:F\to \sX$ is induced by the map $G\times E\to \sX$,
$(g,e)\mapsto g\cdot a(e)$.
\end{itemize}
There is a morphism $\mu:Y\to \sX/N$ that sends the triple
$(F\to T,E'\subset F,b:F\to \sX)$ to $(E'\to T,a=b_{|E'}:E'\to \sX)$.
We see that $\mu\circ\lambda\simeq\id_{\sX/N}$ by applying the
definitions of the objects. We can see also that
$\lambda\circ\mu\simeq\id_Y$ because if $F\to T$ is a $G$-torsor
and $E'\subset F$ is $N$-stable and an $N$-torsor over $S$,
then the morphism $G\times^NE'\to F$ induced by the inclusion
$E'\subset F$ is a morphism of $G$-torsors, hence automatically
an isomorphism. Hence $\sX/N\simeq Y$. Henceforth we write
$(\sX/N)^{\str}=Y$.

The stack $(\sX/N)^{\str}$ is endowed with a natural $G$-action
by $g\cdot (F,E',b)=(F,g(E'),b)$ where $g(E')\subset F$ is the
image of $E'$ by $g:F\to F$. As a consequence of the fact that $N$
is normal in $G$, the subspace $g(E')\subset F$ is $N$-stable,
which ensures that it is an $N$-torsor and $g\cdot (F,E',b)$
is a well-defined point of $(\sX/N)^{\str}$. We now
prove that $\mu:(\sX/N)^{\str}\to \sX/N$ is $G$-equivariant.
For this, recall that the $G$-action on $\sX/N$ is described
by $g\cdot (E,a)=({}^gE,g\circ a)$ where ${}^gE$ is the
$N$-torsor with underlying space $E$ and action defined by
$h\star e\defeq g^{-1}hge$. Now pick sections
$(F,E',b)\in (\sX/N)^{\str}(T)$ and $g\in G(T)$. From the
equivariance property of $b$ we see that $g$ provides
an isomorphism of $N$-torsors:
\[
\begin{tikzcd}[column sep=6mm,row sep=6mm]
{}^gE' \ar[rr,"g"] \ar[rd,"g\circ b"'] & & g(E') \ar[ld,"b"] \\
& \sX. &
\end{tikzcd}
\]
This shows that $\mu$ is $G$-equivariant. Of course, the
restricted action of $N$ on $(\sX/N)^{\str}$ is strictly
trivial, hence there is an action of $G/N$. We leave it to
the reader to verify that the map $(\sX/N)^{\str}\to \sX/N\to \sX/G$
induces an isomorphism $(\sX/N)^{\str}/(G/N)\isomto \sX/G$.
\end{proof}

\appendix

\section{Some results around purity}
\label{appendix:weil_restriction}

We shall make use of the results of \cite{SGA3.2}, Exp.~VIII,
\S~6 on the representability of Weil restriction of closed
subschemes and its consequences for fixed points, transporters,
normalizers, etc. It is convenient to state a version of these
results that is flexible enough to cover our needs, namely
Theorem~\ref{theo:Weil_restriction} and its corollaries. This is
best achieved using the notion of {\em pure morphism}. This
is defined in \cite{RG71} for schemes and extends without
difficulty to algebraic spaces (or even stacks but we have no
need for this); see for instance \cite{Ro11}, Appendix~B.
We recall the definition and the main facts we shall use.

Most of this material is also covered in
\cite{SGA3.1}, Exp.~VI$_{\mathrm{B}}$, \S~6 and
\cite{SGA3.2}, Exp.~XII, \S~9.

\begin{definition*} \label{def_purity}
A morphism of schemes or algebraic spaces $X\to S$ locally of
finite type is called {\em pure} if for each point $s\in S$
with henselization $(\tilde S,\tilde s)\to (S,s)$, and each point
$\tilde x\in \tilde X:=X\times_S\tilde S$ which is an associated
point in its fibre, the closure of $\tilde x$ in $\tilde X$ meets
the special fibre $X\otimes k(\tilde s)$.
\end{definition*}

For example, if $X\to S$ is proper then it is pure, because
the image in $\tilde S$ of the closure of $\tilde x$ is closed
and nonempty, hence contains $\tilde s$.
Another important example is given in \cite{RG71}, Premi\`ere
partie, Ex.~(3.3.4)(iii). For the convenience of the reader,
we provide this example with detailed explanation.

\begin{lemma*}
Let $X\to S$ be a morphism which is flat, locally of finite
presentation, with geometrically irreducible fibres without
embedded components. Then $X$ is $S$-pure.
\end{lemma*}

Note that irreducible implies nonempty by definition.

\begin{proof}
By definition, replacing $S$ by its henselization at an
arbitrary point $s$, we may assume that~$S$ is local henselian
and we have to prove that the closure of a point $x'\in X$ which is
associated in its fibre $X_{s'}$, $s'=f(x')$, meets the special
fibre~$X_s$. Let $Z$ be the closure of $s'$ in $S$. Since $S$
is local, $Z$ meets $s$ and hence we may replace~$S$ by $Z$
and assume that $S$ is irreducible with generic point $s'$.
Since $X\to S$ is open with irreducible fibres, it follows that
$X$ is irreducible, see \spref{004Z}. Now the fibre $X_{s'}$
is irreducible without embedded component, hence the assassin
$\Ass(X_{s'})$ is a single point, that is $\Ass(X_{s'})=\{x'\}$.
This means that~$x'$ is the generic point of the generic fibre,
hence the generic point of $X$. It follows
that its closure is equal to~$X$, and meets the special fibre.
\end{proof}

\begin{corollary*} \label{coro:connected_fibres_is_pure}
Let $G\to S$ be a group algebraic space which is flat, locally
of finite presentation, with connected fibres. Then $G$ is
$S$-pure.
\end{corollary*}

\begin{proof}
A connected group space over a field is a scheme and is
geometrically irreducible (\cite{SGA3.1}, Exp.~VI$_{\mathrm{A}}$, Thm.~2.6.5). Moreover any group scheme over a field is locally
complete intersection (\cite{SGA3.1}, Exp.~VII$_{\mathrm{B}}$,
Cor.~5.5.1) hence without embedded points. It follows from the
above lemma that $G\to S$ is pure.
\end{proof}

\begin{lemma*} \label{lemma:composition_of_pure}
Let $X\to Y\to S$ be morphisms locally of finite type.
Assume that $X\to Y$ is flat and pure, and $Y\to S$ is pure.
Then the composition $X\to S$ is pure.
\end{lemma*}

\begin{proof}
We may assume that $S$ is henselian with closed point $s_0$.
Let $x\in X$ be a point and $y\in Y$ its image. Assume
that $x\in\Ass(X/S)$. By flatness of $X\to S$, this means that
$x\in\Ass(X/Y)$ and $y\in\Ass(Y/S)$, see \cite{RG71} (3.2.4).
By purity of $Y\to S$, the closure of $y$ in $Y$ meets the
special fibre in a point $y_0$. Then the henselization
$(\tilde Y,\tilde y_0)\to (Y,y_0)$ hits the point $y$; let
$\tilde y\in \tilde Y$ be a lift of~$y$. Let
$\tilde x\in \tilde X\defeq X\times_Y \tilde Y$
be a lift of $x$. By invariance of the assassin under \'etale
localization (\cite{RG71} Lemma~3.4.5) and stability of purity
by base change (\cite{RG71} Corollaire~3.3.7), upon replacing
$\{Y,y,X,x\}$ by $\{\tilde Y,\tilde y,\tilde X,\tilde x\}$
we may assume that $Y$ is henselian. Since $X\to Y$ is
pure, the closure of $x$ in $X$ meets the special fibre
$X_{y_0}$. Since $X_{y_0}\subset X_{s_0}$ we are done.
\end{proof}

\begin{corollary*} \label{coro:extension_is_pure}
Let $1\to G'\to G\to G''\to 1$ be an extension of flat group
spaces of finite presentation. If $G'\to S$ and $G''\to S$ are
pure, then $G\to S$ is pure.
\end{corollary*}

\begin{proof}
The morphism $G\to G''$ is a torsor under the flat, pure,
finitely presented $G''$-group space $G'_{G''}$. By flat
descent of purity (\cite{RG71} Corollaire~3.3.7), it follows
that $G\to G''$ is pure. Then the result follows from
Lemma~\ref{lemma:composition_of_pure}.
\end{proof}

Corollaries~\ref{coro:connected_fibres_is_pure}
and~\ref{coro:extension_is_pure} show that if an $S$-group
algebraic space has a finite composition series whose factors
are reductive or proper, flat, finitely presented, then it is
pure. This includes finitely presented group schemes
of multiplicative type, because they are canonically an extension
of a finite, flat group scheme of multiplicative type by a torus.

\begin{theorem*} \label{theo:Weil_restriction}
Let $X\to S$ be a morphism of finite presentation, flat and
pure, and let $Z\to X$ be a closed immersion. Then the Weil
restriction $\Res_{X/S}Z$ is representable by a closed subscheme
of $S$. If moreover $Z\to X$ is of finite presentation, then
$\Res_{X/S}Z\to S$ also.
\end{theorem*}

\begin{proof}
For an arbitrary immersion $Z\to X$, see \cite{AR12}, Prop.~B.3.
The complement when $Z\to X$ is of finite presentation, is
standard; see for instance \cite{LMB00}, Proposition~4.18.
\end{proof}

The next two corollaries appear in \cite{SGA3.1}, Exp.~VI, \S~6.9.

\begin{corollary*} \label{coro:representability_equalizer}
Let $X,Y$ be $S$-schemes with $X\to S$ flat, pure, finitely
presented and $Y\to S$ is separated. Then the functor
$\Hom(X,Y)$ is separated over $S$: for any two $S$-morphisms
$f,g:X\to Y$ the equalizer $\Eq(f,g)\subset S$ defined by the
condition $f=g$ is representable by a closed subscheme of $S$.
If moreover the diagonal of $Y$ is of finite presentation, then
$\Eq(f,g)\into S$ also.
\end{corollary*}

\begin{proof}
Apply Theorem~\ref{theo:Weil_restriction} to the closed
immersion $(f,g)^{-1}(\Delta_Y)\into X$ where the source is the
preimage of the diagonal $\Delta_Y\into Y\times Y$
by $(f,g):X\to Y\times Y$.
\end{proof}

\begin{corollary*} \label{coro:representability_normalizer}
Let $G\to S$ be a group scheme and $H\into G$ a closed
subgroup scheme which is flat, pure, of finite presentation
over $S$. Then the functor $\Norm_G(H)$ defined as the normalizer
of $H$ in $G$ is representable by a closed subgroup scheme of $G$.
If moreover $H\into G$ is of finite presentation, then
$\Norm_G(H)\into G$ also.
\end{corollary*}

\begin{proof}
Apply Theorem~\ref{theo:Weil_restriction} to the Weil
restriction along the projection $H\times G\to G$ of the closed
immersion $c^{-1}(H)\into H\times G$ where $c:H\times G\to G$,
$(h,g)\mapsto ghg^{-1}$ is the conjugation map.
\end{proof}

\begin{corollary*} \label{coro:subgroup_locus}
Let $H\to S$ be a group scheme which is separated and of finite
presentation. Let $L\subset H$ be a proper, flat, finitely
presented closed subscheme. Then the subfunctor of $S$ defined
by the condition that $L$ is a subgroup scheme is representable
by a closed, finitely presented subscheme of $S$.
\end{corollary*}

\begin{proof}
Let $e:S\to H$ be the neutral section of $H$. Let
$a=m_{|L\times L}:L\times L\to H\times H\to H$ be the
restriction of the multiplication of $H$. Let
$b=i_{|L}:L\to H\to H$ be the restriction of the inversion.
To say that $L$ is a subgroup scheme is to say that
the closed immersions $e^{-1}(L)\into S$,
$a^{-1}(L)\into L\times L$ and $b^{-1}(L)\into L$ are
isomorphisms.
By three applications of Theorem~\ref{theo:Weil_restriction}
we see that these conditions are represented by a closed
subscheme of $S$.
\end{proof}

\begin{lemma*} \label{lemma:Hom_proper_separated}
Let $G\to S$ be a proper, flat, finitely presented group
scheme. Let $H\to S$ be a group scheme which is separated and
of finite presentation.
\begin{trivlist}
\itemn{1} The functor $\Hom(G,H)$ is representable by an
$S$-algebraic space separated and locally of finite
presentation.
\itemn{2} If moreover $G$ is finite and $H$ is affine,
then $\Hom(G,H)$ is affine and of finite presentation.
\itemn{3} The inclusion $\Mono(G,H)\subset \Hom(G,H)$ is
representable by open immersions.
\end{trivlist}
\end{lemma*}

\begin{proof}
(1) Let $T=\Hom_{\Sch}(G,H)$ be the functor of
morphisms of schemes. This is representable by an $S$-algebraic
space separated and locally of finite presentation, as follows
from e.g. \spref{0D1C}. Let $f:G_T\to H_T$ be the universal
point. Then $\Hom(G,H)$ is the subfunctor of $T$ that equalizes
the two maps $f\circ m_G$ and $m_H\circ (f\times f)$.
This is representable by a closed subscheme by
Corollary~\ref{coro:representability_equalizer}.

\smallskip

\noindent (2) This is \cite{SGA3.2},
Exp.~XI, Prop.~3.12.b) whose statement requires $G$ to be
of multiplicative type but whose proof does not use this
assumption -- and explicitly points it out.

\smallskip

\noindent (3) Let $T=\Hom(G,H)$ and let $f:G\to H$ be the
universal homomorphism over $T$. The kernel $N=\ker(f)$ is a
closed subgroup scheme of $G$ of finite presentation, and the
functor $\Mono(G,H)$ is the subfunctor of $T$ which renders
the neutral section $e:T\to N$ an isomorphism. The latter
condition is equivalent to $N\to T$ being a closed immersion;
since $N\to T$ is proper, it follows from~\spref{05XA}
that the subfunctor of interest is representable
by an open subscheme of $T$.
\end{proof}

\begin{lemma*} \label{lemma:functor_proper_subgroups}
Let $H\to S$ be a group scheme which is separated and of finite
presentation. Then the $S$-functor of subgroup schemes $L\subset H$
which are proper, flat, finitely presented is representable
by an algebraic space separated and locally of finite
presentation.
\end{lemma*}

\begin{proof}
Let $T=\Hilb$ be the Hilbert scheme of proper, flat, finitely
presented closed subschemes of~$H$. This is representable
by an algebraic space separated and locally of finite
presentation, see \spref{0D01}.
It follows from Corollary~\ref{coro:subgroup_locus} that
the functor of subgroup schemes is representable by a closed
subscheme of $T$.
\end{proof}

\bigskip

\noindent
Matthieu ROMAGNY,
{\sc Univ Rennes, CNRS, IRMAR - UMR 6625, F-35000 Rennes, France} \\
Email address: {\tt matthieu.romagny@univ-rennes1.fr}

\end{document}